\documentclass[11pt,a4paper,reqno]{amsart}
\usepackage{mathrsfs}
\usepackage{syntonly}
\usepackage{amsmath}
\usepackage{amsthm}
\usepackage{amsfonts}
\usepackage{amssymb}
\usepackage{latexsym}
\usepackage{amscd,amssymb,amsopn,amsmath,amsthm,graphics,amsfonts,mathrsfs,accents,enumerate,verbatim,calc}
\usepackage[dvips]{graphicx}
\usepackage[colorlinks=true,linkcolor=blue,citecolor=blue]{hyperref}
\usepackage[all]{xy}

\date{}
\allowdisplaybreaks[4] \footskip=15pt
\renewcommand{\uppercasenonmath}[1]{}

\numberwithin{equation}{section} \theoremstyle{plain}
\newtheorem{lem}{Lemma}[section]
\newtheorem{cor}[lem]{Corollary}
\newtheorem{prop}[lem]{Proposition}
\newtheorem{thm}[lem]{Theorem}

\newtheorem{cond}[lem]{Condition}
\newtheorem{definition}[lem]{Definition}
\newtheorem{Ex}[lem]{Example}
\newtheorem{Quest}[lem]{Question}
\newtheorem{Property}[lem]{Property}
\newtheorem{Properties}[lem]{Properties}
\newtheorem{Subprops}{}[lem]
\newtheorem{Para}[lem]{}

\newtheorem{fact}[lem]{Fact}

\newtheorem{remark}[lem]{Remark}
\newtheorem{rem}[lem]{Remark}

\def\blue{\color{blue}}

\newtheorem*{ack*}{ACKNOWLEDGEMENTS}

\newcommand{\pf}{\noindent\begin {proof}}
\newcommand{\epf}{\end{proof}}

\newcommand{\X}{\mathcal{X}}

\newcommand{\Z}{\mathbb{Z}}

\begin{document}
\setlength{\voffset}{-0.8cm}

\begin{center}
{\large  \bf  Chains of model structures arising from cotorsion pairs on extriangulated categories}

\vspace{0.5cm}  Dandan Sun$^{a}$, Xiaoyan Yang$^{b}$, Dongdong Zhang$^{c}$\footnote{Corresponding authors. Xiaoyan Yang is supported by the National Natural Science Foundation of China (Grant No. 12571035). Dongdong Zhang is supported by National Natural Science Foundation of China (Grant No. 12571042). Panyue Zhou is supported by the National Natural Science Foundation of China (Grant No. 12371034) and by the Scientific Research Fund of Hunan Provincial Education Department (Grant No. 24A0221). Haiyan Zhu is supported by the the National Natural Science Foundation of China (Grant No. 12271481).}, Panyue Zhou$^{d{\blue *}}$ and Haiyan Zhu$^{a}$\\
\medskip

\hspace{-4mm}$^{a}$School of Mathematical Sciences, Zhejiang University of Technology, Hangzhou 310023, China\\
$^{b}$School of Science, Zhejiang University of Science and Technology, Hangzhou 310023, China\\
$^c$School of Mathematical Sciences, Zhejiang Normal University,
\small Jinhua 321004, China\\
$^d$School of Mathematics and Statistics, Changsha University of Science and Technology, \\[1mm]
Changsha 410114, China\\
E-mails: 13515251658@163.com, yangxy@zust.edu.cn, zdd@zjnu.cn, panyuezhou@163.com and hyzhu@zjut.edu.cn \\
\end{center}

\bigskip
\centerline { \bf  Abstract}
\medskip
\leftskip10truemm \rightskip10truemm
\noindent The main aim of this paper is to study chains of model structures arising from cotorsion pairs in extriangulated categories. Starting with a hereditary Hovey triple, we construct further hereditary Hovey triples whose homotopy categories  are equivalent under suitable completeness assumptions, thereby  refining results due to El Maaouy and Shao-Wang-Zhang. As an application, we consider objects of finite Gorenstein injective dimension with respect to a proper class of $\mathbb{E}$-triangles. Under mild set-theoretic assumptions, we obtain a chain of model structures whose homotopy categories are all triangulated equivalent to a common stable category. This recovers known results for Gorenstein injective modules and yields new examples in the derived category of a ring when the proper class is given by cohomological ghost triangles.
\\[2mm]
{\bf Keywords:} extriangulated category; proper class; Gorenstein injective object; model structure\\
{\bf 2020 Mathematics Subject Classification:} 18G80; 18E10; 16E05; 18G20; 18G35

\leftskip0truemm \rightskip0truemm
\bigskip

\section { \bf Introduction}

Cotorsion pairs and their relation with model structures have been studied extensively. By a result of Hovey \cite{HCc}, suitable cotorsion pairs give rise to model structures. This correspondence has been extended to exact and extriangulated categories via Hovey triples (see \cite{Gillespie,G,NP}). Recall that extriangulated categories, introduced by Nakaoka and Palu \cite{NP}, provide a common generalization of exact and triangulated categories. In this setting, extensions, cotorsion pairs, and homological dimensions can be developed in close analogy with the classical cases.

Given a cotorsion pair, one may consider the associated subcategories of objects of finite homological dimension. These subcategories often induce new cotorsion pairs, and hence give rise to further model structures. In this way, one obtains chains of model structures indexed by the non-negative integers. Examples of this phenomenon include extendable cotorsion pairs \cite{SWZ}, modules of finite Gorenstein projective dimension \cite{GLZ}, modules of finite Gorenstein injective dimension \cite{El Maaouy2}, and modules of finite Gorenstein flat dimension \cite{El Maaouy1}.

On the other hand, $\xi$-injective and $\xi$-$\mathcal{G}$injective objects were introduced by Hu-Zhang-Zhou \cite{HZZ} with respect to a proper class $\xi$ of $\mathbb{E}$-triangles. These notions extend injective and Gorenstein injective objects to the setting of extriangulated categories. The subcategories consisting of objects of finite $\xi$-$\mathcal{G}$injective dimension form a chain indexed by the corresponding homological dimension. However, the relation between this chain and model structures is unclear in general.

The purpose of this paper is to investigate chains of model structures arising from cotorsion pairs in extriangulated categories, and apply the resulting theory to objects of finite $\xi$-$\mathcal{G}$injective dimension. Our arguments rely essentially on the structural properties of extriangulated categories.

To state the main results, we recall some notation and definitions.  Assume that $(\mathcal{B}, \mathbb{E}, \mathfrak{s})$ is an extriangulated category. Here $\mathcal{B}$ is an additive category, $\mathbb{E}: \mathcal{B}^{\rm op}\times \mathcal{B} \rightarrow {\rm Ab}$ is an additive bifunctor, and $\mathfrak{s}$ associates to each $\delta \in \mathbb{E}(C,A)$ a collection of $3$-term sequences whose end terms are $A$ and $C$, see \cite[Definition 2.10]{NP}. In the absence of ambiguity, we denote the quadruple simply by $\mathcal{B}:=(\mathcal{B}, \mathbb{E}, \mathfrak{s})$.

Let $(\mathcal{X}, \mathcal{Y})$ be a complete cotorsion pair in $\mathcal{B}$. We call $\mathcal{X}\cap\mathcal{Y}$ the \emph{core} of $(\mathcal{X}, \mathcal{Y})$. For each integer $n\geqslant 0$, we denote by $\mathcal{Y}_n$ the full subcategory of $\mathcal{B}$ consisting of all objects of $\mathcal{Y}$-coresolution dimension at most $n$. For brevity, we write $^\perp\mathcal{Y}_n$ in place of $^\perp(\mathcal{Y}_n)$.

Recall that a triple $(\mathcal{C}, \mathcal{W}, \mathcal{F})$ of $\mathcal{B}$ is called a {\it Hovey triple} if $(\mathcal{C}\cap \mathcal{W}, \mathcal{F})$ and $(\mathcal{C}, \mathcal{W}\cap \mathcal{F})$ are complete cotorsion pairs, and $\mathcal{W}$ is {\it thick} in $\mathcal{B}$, that is, $\mathcal{W}$ is closed under direct summands and satisfies the two-out-of-three property for conflations. A Hovey triple is called {\it hereditary} if the cotorsion pairs $(\mathcal{C}\cap \mathcal{W}, \mathcal{F})$ and $(\mathcal{C}, \mathcal{W}\cap \mathcal{F})$ are hereditary. The homotopy category associated with a Hovey triple $\mathcal{M}=(\mathcal{C}, \mathcal{W}, \mathcal{F})$ is denoted by ${\rm Ho}(\mathcal{M})$.

For a full additive subcategory $\mathcal{U}$ of an additive category $\mathcal{A}$, recall that the quotient category $\underline{\mathcal{A}}$ has the same objects as $\mathcal{A}$, and
$$
\textrm{Hom}_{\underline{\mathcal{A}}}(X,Y)
=
\textrm{Hom}_{\mathcal{A}}(X,Y)
/\textrm{Hom}_{\mathcal{A}}(X,\mathcal{U},Y),
$$
where $\textrm{Hom}_{\mathcal{A}}(X,\mathcal{U},Y)$ denotes the subgroup of $\textrm{Hom}_{\mathcal{A}}(X,Y)$ consisting of morphisms factoring through an object of $\mathcal{U}$. Then $\underline{\mathcal{A}}$ is an additive category.

Our first result provides a method for constructing chains of model structures from a Hovey triple, and shows that these model structures have equivalent homotopy categories under suitable assumptions. For notation not defined in the theorem, we refer the reader to Section \ref{Preliminaries}.

\begin{thm}\label{thm2} Let $(\mathcal{B}, \mathbb{E}, \mathfrak{s})$ be a weakly idempotent complete
 extriangulated category, and $\mathcal{M}=(\mathcal{C}, \mathcal{W}, \mathcal{F})$  a hereditary Hovey triple in $\mathcal{B}$. Assume that $(^\perp(\mathcal{W}\cap\mathcal{F})_n, (\mathcal{W}\cap\mathcal{F})_n)$ is a complete cotorsion pair for some non-negative integer $n$. Then

$(1)$ The pair $({^\perp(\mathcal{W}\cap \mathcal{F})}_n\cap\mathcal{W}, \mathcal{F}_n)$
is a complete and hereditary cotorsion pair with core $^\perp(\mathcal{W}\cap\mathcal{F})_n\cap (\mathcal{W}\cap\mathcal{F})_n$.

$(2)$ The triple $\mathcal{M}_n=({^\perp(\mathcal{W}\cap \mathcal{F})}_n, \mathcal{W}, \mathcal{F}_n)$ is a hereditary Hovey triple in $\mathcal{B}$, and ${^\perp(\mathcal{W}\cap \mathcal{F})}_n\cap\mathcal{F}_n$ is a Frobenius extriangulated category with the class of projective-injective objects ${^\perp(\mathcal{W}\cap \mathcal{F})}_n\cap(\mathcal{W}\cap \mathcal{F})_n$. Furthermore, we obtain triangle equivalences
$$\underline{{^\perp(\mathcal{W}\cap \mathcal{F})}_n\cap\mathcal{F}_n}\simeq {\rm Ho}(\mathcal{M}_n)= {\rm Ho}(\mathcal{M})\simeq \underline{\mathcal{C}\cap\mathcal{F}}.$$
\end{thm}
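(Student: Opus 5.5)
Write $\omega=\mathcal{W}\cap\mathcal{F}$, $\mathcal{Q}_n={}^\perp\omega_n$ and $\mathcal{F}_n$ for the objects of $\mathcal{F}$-coresolution dimension at most $n$. The plan is to establish (1) first and then assemble (2) from two complete cotorsion pairs, via the extriangulated version of Hovey's correspondence \cite{NP} and Gillespie's recipe for building Hovey triples from two compatible cotorsion pairs.

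\textbf{Preliminary facts.} I need the following, all from the dimension calculus in Section~\ref{Preliminaries} for hereditary complete cotorsion pairs in weakly idempotent complete extriangulated categories:
\begin{itemize}
\item[(a)] $\omega_n=\mathcal{W}\cap\mathcal{F}_n$. This holds because $\mathcal{W}$ is thick and $(\mathcal{C},\omega)$ is hereditary, so an $\mathcal{F}$-coresolution of length $n$ of an object of $\mathcal{W}$ can be replaced by an $\omega$-coresolution.
\item[(b)] ${}^\perp\mathcal{F}_n={}^\perp\mathcal{F}\cap{}^{\perp_{>n}}\mathcal{B}$, which by dimension shifting equals $\mathcal{C}\cap\mathcal{W}$ intersected with the objects $X$ having $\mathbb{E}^{i}(X,-)=0$ for $i>n$.
\item[(c)] $\mathcal{Q}_n$ is closed under extensions, cocones of deflations, and direct summands, since $(\mathcal{Q}_n,\omega_n)$ is complete and $\omega_n$ is coresolving.
\end{itemize}

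\textbf{Step 1: the cotorsion pair in (1).} I will show that $(\mathcal{Q}_n\cap\mathcal{W},\mathcal{F}_n)$ is a cotorsion pair by proving ${}^\perp\mathcal{F}_n=\mathcal{Q}_n\cap\mathcal{W}$ and $(\mathcal{Q}_n\cap\mathcal{W})^\perp=\mathcal{F}_n$.

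For "$\subseteq$" in the first equality: since $\omega_n\subseteq\mathcal{F}_n$, we get ${}^\perp\mathcal{F}_n\subseteq\mathcal{Q}_n$. Also ${}^\perp\mathcal{F}_n\subseteq{}^\perp\mathcal{F}=\mathcal{C}\cap\mathcal{W}$.

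For "$\supseteq$": take $X\in\mathcal{Q}_n\cap\mathcal{W}$ and $F\in\mathcal{F}_n$. Use the complete pair $(\mathcal{C}\cap\mathcal{W},\mathcal{F})$ iteratively, or the completeness of $(\mathcal{C},\omega)$, to find an $\mathbb{E}$-triangle $F\to W'\to C'\dashrightarrow$ with $W'\in\omega_n$ and $C'\in\mathcal{C}$. Then apply $\mathbb{E}(X,-)$. Since $X\in\mathcal{W}$ and $C'\in\mathcal{C}$, the two-out-of-three property with heredity gives $\mathbb{E}(X,C')=0$ after dimension shifting through $\mathcal{C}\cap\mathcal{W}$. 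Hence $\mathbb{E}(X,F)=0$.

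The reverse equality $(\mathcal{Q}_n\cap\mathcal{W})^\perp=\mathcal{F}_n$ is shown by taking $Y$ in the left side and approximating it by the complete pair $(\mathcal{Q}_n,\omega_n)$: $Y\to W\to Q\dashrightarrow$. Then the special $\mathcal{C}\cap\mathcal{W}$-precover of $Q$ lets us show that $Y$ has an $\mathcal{F}$-coresolution of length $n$.

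\textbf{Step 2: completeness, heredity and core for (1).} Completeness is obtained by gluing. For $B\in\mathcal{B}$, first take $B\to W\to Q\dashrightarrow$ from $(\mathcal{Q}_n,\omega_n)$. Then take an approximation of $W$ from $(\mathcal{C}\cap\mathcal{W},\mathcal{F})$. Then apply (ET4)-type homotopy pullbacks, using (a) and (c), to produce approximations with terms in $\mathcal{Q}_n\cap\mathcal{W}$ and in $\mathcal{F}_n$. Heredity follows from (c) together with the thickness of $\mathcal{W}$. The core is $\mathcal{Q}_n\cap\mathcal{W}\cap\mathcal{F}_n=\mathcal{Q}_n\cap\omega_n$ by (a).

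\textbf{Step 3: the Hovey triple in (2).} We have $\mathcal{Q}_n\cap\mathcal{W}\cap\mathcal{F}_n=\mathcal{Q}_n\cap\omega_n$ and $\mathcal{W}\cap\mathcal{F}_n=\omega_n$ by (a). So the two pairs $(\mathcal{Q}_n\cap\mathcal{W},\mathcal{F}_n)$ and $(\mathcal{Q}_n,\omega_n)$ are complete hereditary cotorsion pairs with equal cores. Moreover $\mathcal{W}$ is exactly the class determined by them, i.e.\ $\mathcal{Q}_n\cap\mathcal{W}$ is the trivially cofibrant part and $\mathcal{W}\cap\mathcal{F}_n$ the trivially fibrant part. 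Gillespie's theorem, in its extriangulated form, then makes $\mathcal{M}_n$ a hereditary Hovey triple with weak equivalences determined by the same thick class $\mathcal{W}$.

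Since $\mathcal{M}$ and $\mathcal{M}_n$ share $\mathcal{W}$, they have the same weak equivalences, so ${\rm Ho}(\mathcal{M}_n)={\rm Ho}(\mathcal{M})$. The Frobenius structure on $\mathcal{Q}_n\cap\mathcal{F}_n$, with projective-injectives equal to the core, and the equivalences ${\rm Ho}\simeq$ the stable category of cofibrant-fibrant objects (for both $\mathcal{M}_n$ and $\mathcal{M}$), are the standard results for hereditary Hovey triples in weakly idempotent complete extriangulated categories. Those stable equivalences are triangle equivalences.

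\textbf{Main obstacle.} I expect the hardest part to be the equality $(\mathcal{Q}_n\cap\mathcal{W})^\perp=\mathcal{F}_n$ and the gluing for completeness. Here the dimension shifts must be carried out carefully using only higher extensions that are defined for hereditary pairs, with weak idempotent completeness supplying the needed cocone and summand closure.
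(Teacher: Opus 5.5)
Your outline has genuine gaps in Steps 1 and 2. Step 3 and your heredity argument (cocone-closure of $\mathcal{Q}_n$ intersected with the thick class $\mathcal{W}$) are fine.

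\textbf{The orthogonality $\mathcal{Q}_n\cap\mathcal{W}\subseteq{}^\perp\mathcal{F}_n$.} Your argument reads the long exact sequence at the wrong place. For an $\mathbb{E}$-triangle $F\to W'\to C'\dashrightarrow$ one has
\[
\mathcal{B}(X,W')\to\mathcal{B}(X,C')\to\mathbb{E}(X,F)\to\mathbb{E}(X,W').
\]
So with $\mathbb{E}(X,W')=0$, the group $\mathbb{E}(X,F)$ is the cokernel of $\mathcal{B}(X,W')\to\mathcal{B}(X,C')$. Whether $\mathbb{E}(X,C')$ vanishes is irrelevant, and in general it does not vanish.

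Worse, you never use $F\in\mathcal{F}_n$. A triangle of this shape, with $W'\in\omega\subseteq\omega_n$ and $C'\in\mathcal{C}$, exists for every $F$. Your argument would therefore make every object of $\mathcal{Q}_n\cap\mathcal{W}$ projective. This fails already for $n=0$, where $\mathcal{Q}_0\cap\mathcal{W}=\mathcal{C}\cap\mathcal{W}$.

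What is needed is a surjectivity of Hom-groups. For that you first need the description of $\mathcal{F}_n$ in Theorem~\ref{thm1}: every $F\in\mathcal{F}_n$ sits in a triangle $N\to L\to F\dashrightarrow$ with $N\in\mathcal{F}$ and $L\in\omega_n$. This is proved by induction using Lemmas~\ref{lem2.19} and~\ref{lem1}. Given this, the argument runs as follows.
\begin{enumerate}
\item Take $L\to D\to K\dashrightarrow$ with $D\in\omega$ and $K\in\omega_{n-1}$.
\item (ET4) gives $F\to F'\to K\dashrightarrow$, where $F'$ is the cone of $N\to D$; hence $F'\in\mathcal{F}$.
\item $\mathbb{E}(X,F')=0$, since $X\in\mathcal{C}\cap\mathcal{W}={}^\perp\mathcal{F}$.
\item $\mathcal{B}(X,D)\to\mathcal{B}(X,K)$ is onto because $\mathbb{E}(X,L)=0$, and it factors through $\mathcal{B}(X,F')$. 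Hence $\mathcal{B}(X,F')\to\mathcal{B}(X,K)$ is onto, and so $\mathbb{E}(X,F)=0$.
\end{enumerate}

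\textbf{The inclusion $(\mathcal{Q}_n\cap\mathcal{W})^\perp\subseteq\mathcal{F}_n$, and completeness.} In both places you apply $(\mathcal{Q}_n,\omega_n)$ to an arbitrary object first. The resulting $Q\in\mathcal{Q}_n$ is then not in $\mathcal{W}$: for $Y\in\mathcal{F}\setminus\mathcal{W}$, $Q\in\mathcal{W}$ would force $Y\in\mathcal{W}$ by thickness. So $Y\to W\to Q\dashrightarrow$ does not split. Neither precovering $Q$ nor re-approximating $W$ produces an object of $\mathcal{Q}_n\cap\mathcal{W}$, and ``lets us show'' is exactly where the proof is missing.

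The missing idea is to reverse the order. First use $(\mathcal{C}\cap\mathcal{W},\mathcal{F})$ to pass to an object of $\mathcal{W}$, and only then apply $(\mathcal{Q}_n,\omega_n)$, so that thickness puts the $\mathcal{Q}_n$-term into $\mathcal{Q}_n\cap\mathcal{W}$.
\begin{itemize}
\item For $Y\in(\mathcal{Q}_n\cap\mathcal{W})^\perp$, take $N\to L\to Y\dashrightarrow$ with $L\in\mathcal{C}\cap\mathcal{W}$ and $N\in\mathcal{F}$. Then $L\in(\mathcal{Q}_n\cap\mathcal{W})^\perp$. In the preenvelope $L\to L_1\to K\dashrightarrow$ from $(\mathcal{Q}_n,\omega_n)$ one gets $K\in\mathcal{Q}_n\cap\mathcal{W}$, so the triangle splits and $L\in\omega_n$. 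Then $Y\in\mathcal{F}_n$ by Theorem~\ref{thm1}.
\item For completeness, take $M\to H\to N\dashrightarrow$ with $H\in\mathcal{F}$ and $N\in\mathcal{C}\cap\mathcal{W}$, then $B\to L_1\to N\dashrightarrow$ with $L_1\in\mathcal{Q}_n$ and $B\in\omega_n$. Lemma~\ref{lem1} gives $M\to D\to L_1\dashrightarrow$ with $D\in\mathcal{F}_n$ (an extension of $H$ and $B$) and $L_1\in\mathcal{W}$. The other approximation is obtained dually via (ET4)$^{\rm op}$. You need both, since Lemma~\ref{lemma2.18} requires enough projectives and injectives.
\end{itemize}

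Finally, preliminary (b) is false: for $n=0$ it would say ${}^\perp\mathcal{F}=\mathcal{C}\cap\mathcal{W}$ consists of projective objects. It also relies on higher $\mathbb{E}^i$, which in the paper's framework (Lemma~\ref{lem2.11}) need enough projectives and injectives, precisely the hypothesis the theorem avoids. You never actually use (b). Gillespie's theorem in Step 3 is also unnecessary: with $\mathcal{W}$ thick and $\mathcal{W}\cap\mathcal{F}_n=\omega_n$, the two complete pairs form a Hovey triple by definition.
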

Now, we compare Theorem \ref{thm2} with some results in the literature.

\begin{itemize}
\item Theorem \ref{thm2} improves the result of Shao-Wang-Zhang in \cite[Theorem 3.9]{SWZ} by showing that the condition ``the exact category has enough projectives and enough injectives" is unnecessary (see Corollary \ref{cor3.4} and Remark \ref{rem:3.5}(1)).

  \item Theorem \ref{thm2} may be regarded as a refinement of the work of El Maaouy in \cite[Theorem A]{El Maaouy2} (see Remark \ref{rem:3.5}(2)). In that paper, the hereditary Hovey triple $({^\perp(\mathcal{W}\cap \mathcal{F})}_n, \mathcal{W}, \mathcal{F}_n)$ was obtained under the further hypotheses that $\mathcal{B}$ is an abelian category and $(\mathcal{C}, \mathcal{W}, \mathcal{F})$ is a hereditary Hovey triple on $\mathcal{B}$ for which:

(i) $(\mathcal{W},\mathcal{W}^\perp)$ is a complete cotorsion pair;

(ii) $(^\perp(\mathcal{W}\cap\mathcal{F})_k, (\mathcal{W}\cap\mathcal{F})_k)$ is a complete cotorsion pair for every $k\in\{0, 1, \cdots, n\}$.

\end{itemize}


For an arbitrary ring $R$, it is a well-established fact that the subcategory $\mathcal{GI}(R)$ of Gorenstein injective modules, together with the collection of short exact sequences whose three terms are all in $\mathcal{GI}(R)$, is a Frobenius category. Let $\mathrm{Mod}R$ denote the category of left $R$-modules and $\mathcal{I}(R)$ the subcategory of injective modules. This result admits a natural extension to an infinite sequence of subcategories:
$$\mathcal{GI}(R)={^\perp{\mathcal{I}(R)_{0}}\cap\mathcal{GI}(R)_{0}},~
{^\perp{\mathcal{I}(R)_{1}}\cap\mathcal{GI}(R)_{1}},~\cdots,{^\perp{\mathcal{I}(R)_{k}}\cap\mathcal{GI}(R)_{k}},\cdots.$$
Consequently, each associated stable category $\underline{\mathcal{I}(R)_{k} \cap \mathcal{GI}(R)_{k}}$ inherits a triangulated structure. The question of how this stable category evolves as the Gorenstein injective dimension increases does not yield an obvious answer when considered purely within the framework of stable categories. However, by analyzing these stable categories through the lens of model structures, El Maaouy provided a definitive resolution in \cite[Proposition 3.9]{El Maaouy2}, establishing the following triangle equivalences:
\begin{equation}\label{eq1.1}
\underline{\mathcal{GI}(R)}\simeq
\underline{^\perp\mathcal{I}(R)_{1} \cap \mathcal{GI}(R)_{1}}\simeq\cdots\simeq\underline{^\perp{\mathcal{I}(R)_{k}}\cap\mathcal{GI}(R)_{k}}\simeq\cdots. \end{equation}
The essential ingredient underpinning this proof is  that the complete cotorsion pair $(\mathrm{Mod}R,\mathcal{I}(R))$ always induces the completeness of cotorsion pairs $(^{\perp}\mathcal{I}(R)_{k},\mathcal{I}(R)_{k})$ for all integers $k > 0$.

Let $\xi$ be a proper class in an extriangulated category $(\mathcal{B}, \mathbb{E}, \mathfrak{s})$. By \cite[Theorem 3.2]{HZZ}, the triple $(\mathcal{B}, \mathbb{E}_\xi, \mathfrak{s}_\xi)$ carries a natural extriangulated structure. Following \cite{HZZ}, we denote by $\mathcal{GI}(\xi)$ the full subcategory of $\xi$-$\mathcal{G}$injective objects in $\mathcal{B}$, and by $\mathcal{I}(\xi)$ the full subcategory of $\xi$-injective objects in $\mathcal{B}$. It is known that $\mathcal{GI}(\xi)$ is a Frobenius extriangulated category, and hence its stable category $\underline{\mathcal{GI}(\xi)}$ is triangulated. Analogously to the triangle equivalences in {\rm (\ref{eq1.1})} for module categories, one may ask how the stable category behaves as the $\xi$-$\mathcal{G}$injective dimension of objects in $\mathcal{B}$ increases. To address this question, one is naturally led to determine when the complete cotorsion pair $(\mathcal{B},\mathcal{I}(\xi))$ induces the completeness of the cotorsion pairs $(^{\perp}\mathcal{I}(\xi)_{k},\mathcal{I}(\xi)_{k})$ for all integers $k>0$. At present, we do not know how to achieve this in general. Nevertheless, Proposition \ref{lem4.7} shows that this is indeed the case under the following assumptions: the pair $({^\bot}\mathcal{I}(\xi),\mathcal{I}(\xi))$ is cogenerated by a set $\mathcal{S}$ in the extriangulated category $(\mathcal{B}, \mathbb{E}_\xi, \mathfrak{s}_\xi)$, and $\mathcal{B}$ satisfies

(Ax1) Arbitrary transfinite compositions of $\xi$-inflations exist and remain $\xi$-inflations;

(Ax2) Every object of $\mathcal{B}$ is small relative to the class of all $\xi$-inflations.

\noindent These assumptions are inspired by the work of Saor$\acute{{\i}}$n--$\mathrm{\check{S}}\mathrm{\check{t}}\textrm{ov}\acute{{\i}}\mathrm{\check{c}}\textrm{ek}$ \cite{JSS}. For notations not defined in the proposition, we refer to Section \ref{model-sts-arsing-from-GI}.

As a consequence of Theorem \ref{thm2}, the following result provides sufficient conditions for constructing chains of model structures arising from objects of finite $\xi$-$\mathcal{G}$injective dimension. Moreover, it shows that enlarging the $\xi$-$\mathcal{G}$injective dimension does not alter the associated stable category.

\begin{thm}\label{thm4} Let $n$ be a non-negative integer, and let $(\mathcal{B}, \mathbb{E}_\xi, \mathfrak{s}_\xi)$ be a  weakly idempotent complete
 extriangulated category satisfying {\rm (Ax1)} and {\rm (Ax2)}. Assume that $(^\perp\mathcal{GI}(\xi), \mathcal{GI}(\xi))$ is a complete cotorsion pair in $(\mathcal{B}, \mathbb{E}_\xi, \mathfrak{s}_\xi)$ and  that the pair $({^\bot}\mathcal{I}(\xi),\mathcal{I}(\xi))$ is cogenerated by a set $\mathcal{S}$ in $(\mathcal{B}, \mathbb{E}_\xi, \mathfrak{s}_\xi)$. Then

$(1)$ The pair $(^\perp\mathcal{I}(\xi)_n\cap{^\perp\mathcal{GI}}(\xi), \mathcal{GI}(\xi)_n)$ is a complete hereditary cotorsion pair in $(\mathcal{B}, \mathbb{E}_\xi, \mathfrak{s}_\xi)$ with core $\mathcal{I}(\xi)_n\cap{^\perp\mathcal{I}}(\xi)_n$.

$(2)$ The triple $(^\perp\mathcal{I}(\xi)_n, {^\perp\mathcal{GI}(\xi)}, \mathcal{GI}(\xi)_n)$ is a hereditary Hovey triple in $(\mathcal{B}, \mathbb{E}_\xi, \mathfrak{s}_\xi)$; $^{\perp}\mathcal{I}(\xi)_n\cap\mathcal{GI}(\xi)_n$ is a Frobenius extriangulated category such that $^{\perp}\mathcal{I}(\xi)_n\cap\mathcal{I}(\xi)_n$ is its class of projective-injective objects; and the homotopy category is the stable category $\underline{^{\perp}\mathcal{I}(\xi)_n\cap\mathcal{GI}(\xi)_n}$, which is triangle equivalent to $\underline{\mathcal{GI}(\xi)}$.
\end{thm}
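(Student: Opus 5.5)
The plan is to apply Theorem \ref{thm2} to a suitable hereditary Hovey triple in $(\mathcal{B}, \mathbb{E}_\xi, \mathfrak{s}_\xi)$. The natural choice is
$$\mathcal{M}=(\mathcal{C},\mathcal{W},\mathcal{F})=(\mathcal{B},\ {^\perp\mathcal{GI}(\xi)},\ \mathcal{GI}(\xi)).$$
We first verify that this is a hereditary Hovey triple.

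\emph{The two cotorsion pairs.} The pair $(\mathcal{C}\cap\mathcal{W},\mathcal{F})=({^\perp\mathcal{GI}(\xi)},\mathcal{GI}(\xi))$ is complete by hypothesis. Its heredity comes from the known closure properties of $\mathcal{GI}(\xi)$ from \cite{HZZ}: it is closed under cocones of $\xi$-deflations between its objects. The second pair is $(\mathcal{C},\mathcal{W}\cap\mathcal{F})=(\mathcal{B},{^\perp\mathcal{GI}(\xi)}\cap\mathcal{GI}(\xi))$. Here I would show that ${^\perp\mathcal{GI}(\xi)}\cap\mathcal{GI}(\xi)=\mathcal{I}(\xi)$. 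For ``$\supseteq$'', an $\xi$-injective object lies in $\mathcal{GI}(\xi)$, and $\mathbb{E}_\xi^{\geq1}(\mathcal{I}(\xi),\mathcal{GI}(\xi))=0$ by the defining totally acyclic coresolutions. For ``$\subseteq$'', take $G$ in the intersection and a conflation $G'\to I\to G$ with $I\in\mathcal{I}(\xi)$ and $G'\in\mathcal{GI}(\xi)$. This conflation splits, so $G$ is a summand of $I$; weak idempotent completeness then gives $G\in\mathcal{I}(\xi)$. The pair $(\mathcal{B},\mathcal{I}(\xi))$ is complete and hereditary because $\mathcal{B}$ has enough $\xi$-injectives, which follows from the cotorsion pair being cogenerated by a set together with (Ax1) and (Ax2).

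\emph{Thickness of $\mathcal{W}$.} It remains to check that $\mathcal{W}={^\perp\mathcal{GI}(\xi)}$ is thick. Closure under summands and extensions is clear, and closure under cocones of deflations follows from heredity. For closure under cones of inflations between objects of $\mathcal{W}$, I would use the standard argument for the left half of a hereditary pair: take $A\to B\to C$ with $A,B\in\mathcal{W}$ and apply $\mathbb{E}_\xi(-,G)$ with $G\in\mathcal{GI}(\xi)$. Then $\mathbb{E}_\xi(C,G)$ sits between $\mathrm{Hom}(B,G)\to\mathrm{Hom}(A,G)$ and $\mathbb{E}_\xi(B,G)=0$. To show $\mathbb{E}_\xi(C,G)=0$, I need every map $A\to G$ to extend to $B$. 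I would obtain this by embedding $G$ in an $\xi$-injective $I$ with Gorenstein injective cone $G''$. A map $A\to G$ extends to $B\to I$ because $I$ is injective. The obstruction then lies in $\mathrm{Hom}(B,G'')$ modulo the relevant image, and it can be killed using $\mathbb{E}_\xi(A,G)=0$ and a diagram chase.

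I expect this thickness step to be the main obstacle. If the diagram chase fails, I would fall back on the dimension-shifting characterization: $\mathcal{W}$ equals the class of objects $X$ with $\mathbb{E}^{\geq1}_\xi(X,\mathcal{GI}(\xi))=0$, using higher extensions, which are available because the pair is hereditary.

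\emph{Completeness of the truncated pair.} With $\mathcal{W}\cap\mathcal{F}=\mathcal{I}(\xi)$, the extra hypothesis of Theorem \ref{thm2} becomes completeness of $({^\perp\mathcal{I}(\xi)_n},\mathcal{I}(\xi)_n)$. This is exactly what Proposition \ref{lem4.7} provides under (Ax1), (Ax2) and the set-cogeneration assumption.

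\emph{Applying Theorem \ref{thm2}.} Finally I translate its conclusions, using $\mathcal{F}_n=\mathcal{GI}(\xi)_n$:
\begin{itemize}
\item[$\bullet$] Theorem \ref{thm2}(1) gives the complete hereditary pair $({^\perp\mathcal{I}(\xi)_n}\cap{^\perp\mathcal{GI}(\xi)},\mathcal{GI}(\xi)_n)$ with core ${^\perp\mathcal{I}(\xi)_n}\cap\mathcal{I}(\xi)_n$, which is statement (1).
\item[$\bullet$] Theorem \ref{thm2}(2) gives the hereditary Hovey triple of statement (2), the Frobenius structure on ${^\perp\mathcal{I}(\xi)_n}\cap\mathcal{GI}(\xi)_n$, and a triangle equivalence of its stable category with $\underline{\mathcal{C}\cap\mathcal{F}}=\underline{\mathcal{GI}(\xi)}$. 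Here the stable category is taken modulo $\mathcal{I}(\xi)$.
\end{itemize}
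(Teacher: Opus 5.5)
Your overall strategy is the paper's. You take $(\mathcal{C},\mathcal{W},\mathcal{F})=(\mathcal{B},{^\perp\mathcal{GI}(\xi)},\mathcal{GI}(\xi))$, note $\mathcal{W}\cap\mathcal{F}=\mathcal{I}(\xi)$, get completeness of $({^\perp\mathcal{I}(\xi)_n},\mathcal{I}(\xi)_n)$ from Proposition~\ref{lem4.7}(1), and read everything off Theorem~\ref{thm2}. Your proof of ${^\perp\mathcal{GI}(\xi)}\cap\mathcal{GI}(\xi)=\mathcal{I}(\xi)$ and your translation of the conclusions are fine. Enough $\xi$-injectives is a standing hypothesis in Section~\ref{section4}, so you need not derive it. The paper gets thickness of ${^\perp\mathcal{GI}(\xi)}$ by quoting Lemma~\ref{lem4.2}, and the argument you offer in its place does not work.

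\textbf{The gap: closure of $\mathcal{W}$ under cones of inflations.} Embedding $G\to I\to G''$ moves the problem the wrong way. It only identifies $\mathbb{E}_\xi(C,G)$ with $\operatorname{coker}(\mathcal{B}(C,I)\to\mathcal{B}(C,G''))$. So once you extend $A\to G\to I$ to $B\to I$, the obstruction to getting back into $G$ is again a class in $\mathbb{E}_\xi(C,G)$, the group you are trying to kill. The hypothesis $\mathbb{E}_\xi(A,G)=0$ only lets maps $A\to G''$ lift to $I$ and does not touch that class.

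Your fallback is also insufficient as stated. The description $\mathcal{W}=\{X\mid\mathbb{E}^{\geqslant1}_\xi(X,\mathcal{GI}(\xi))=0\}$ holds for the left class of every complete hereditary cotorsion pair. For example, it holds for $\mathcal{P}(R)$ in $(\mathcal{P}(R),\mathrm{Mod}R)$, which is not closed under cokernels of monomorphisms. Applied to $A\to B\to C$, the long exact sequence yields only $\mathbb{E}^{\geqslant2}_\xi(C,\mathcal{GI}(\xi))=0$.

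What is missing is the specific feature of $\mathcal{GI}(\xi)$: every $G\in\mathcal{GI}(\xi)$ is a cone. Concretely, the other half of its complete coresolution gives an $\mathbb{E}_\xi$-triangle $G''\to I\xrightarrow{\pi}G$ with $I\in\mathcal{I}(\xi)$ and $G''\in\mathcal{GI}(\xi)$. Now take $A\xrightarrow{\iota}B\to C$ with $A,B\in\mathcal{W}$ and any $f\colon A\to G$.
\begin{enumerate}
\item Since $\mathbb{E}_\xi(A,G'')=0$, we have $f=\pi f_1$ for some $f_1\colon A\to I$.
\item Since $\mathbb{E}_\xi(C,I)=0$, we have $f_1=g\iota$ for some $g\colon B\to I$.
\item Hence $f=(\pi g)\iota$, so $\mathcal{B}(B,G)\to\mathcal{B}(A,G)$ is onto.
\item As $\mathbb{E}_\xi(B,G)=0$, it follows that $\mathbb{E}_\xi(C,G)=0$.
\end{enumerate}
Equivalently, $\mathbb{E}_\xi(C,G)\cong\mathbb{E}^2_\xi(C,G'')$, which sits between $\mathbb{E}_\xi(A,G'')=0$ and $\mathbb{E}^2_\xi(B,G'')=0$.

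\textbf{A smaller slip: the reason you give for heredity is wrong.} $\mathcal{GI}(\xi)$ is not closed under cocones of deflations in general. For $\xi$ the class of all short exact sequences of abelian groups, $\mathcal{GI}(\xi)$ consists of the injectives, and $\mathbb{Z}\to\mathbb{Q}\to\mathbb{Q}/\mathbb{Z}$ is a counterexample. In any case, Lemma~\ref{lem2.18} requires the right class to be closed under cones of inflations. That property does hold, since $\mathcal{GI}(\xi)$ is coresolving, as used in the proof of Theorem~\ref{thm3}. Alternatively, as the paper does, heredity follows once ${^\perp\mathcal{GI}(\xi)}$ is known to be thick.

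With these two repairs, or simply by citing Lemma~\ref{lem4.2}, your proof coincides with the paper's.
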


It is worth noting that Theorem \ref{thm4} not only extends El Maaouy's result \cite[Corollary 3.9]{El Maaouy2} on chains of model structures arising from modules of finite Gorenstein injective dimension, but also yields a new result for the derived category of a ring when the proper class is given by the cohomological ghost triangles introduced by Otake \cite{O} (see Corollary \ref{cor:4.12}).

While the work of $\mathrm{\check{S}}$aroch and $\mathrm{\check{S}}\mathrm{\check{t}}\textrm{ov}\acute{{\i}}\mathrm{\check{c}}\textrm{ek}$ \cite{SS} shows that $(^{\perp}\mathcal{GI}(R),\mathcal{GI}(R))$ is a complete cotorsion pair for any ring $R$, it remains unknown whether an analogous completeness result holds for the cotorsion pair $(^\perp\mathcal{GI}(\xi), \mathcal{GI}(\xi))$ in the extriangulated category $(\mathcal{B}, \mathbb{E}_\xi, \mathfrak{s}_\xi)$. Motivated by the elegant construction of Gao--Lu--Zhang for chains of model structures on exact categories arising from modules of Gorenstein projective dimension bounded by a non-negative integer (see \cite[Theorem 6.3]{GLZ}), we establish the following result.

\begin{thm}\label{thm4.15'} Let $m$ and $n$ non-negative integers with $m\leqslant n$, and let $(\mathcal{B}, \mathbb{E}_\xi, \mathfrak{s}_\xi)$ be a  weakly idempotent complete
 extriangulated category satisfying {\rm (Ax1)} and {\rm (Ax2)}. Assume that the pair $({^\bot}\mathcal{I}(\xi),\mathcal{I}(\xi))$ is cogenerated by a set $\mathcal{S}$ in $(\mathcal{B}, \mathbb{E}_\xi, \mathfrak{s}_\xi)$.

$(1)$ The pair $(^\perp\mathcal{I}(\xi)_m\cap\mathcal{I}(\xi)_n, \mathcal{GI}(\xi)_m)$ is a complete and hereditary cotorsion pair in the extriangulated category $(\mathcal{GI}(\xi)_n, \mathbb{E}_{\mathcal{GI}(\xi)_n}, \mathfrak{s}_{\mathcal{GI}(\xi)_n})$ with core $^\perp\mathcal{I}(\xi)_m\cap\mathcal{I}(\xi)_m$.

$(2)$ The triple $(^\perp\mathcal{I}(\xi)_m\cap\mathcal{GI}(\xi)_n, \mathcal{I}(\xi)_n, \mathcal{GI}(\xi)_m)$ is a hereditary Hovey triple in the extriangulated category $(\mathcal{GI}(\xi)_n, \mathbb{E}_{\mathcal{GI}(\xi)_n}, \mathfrak{s}_{\mathcal{GI}(\xi)_n})$, and the homotopy category is the stable category $\underline{^{\perp}\mathcal{I}(\xi)_m\cap\mathcal{GI}(\xi)_m}$, which is triangle equivalent to $\underline{\mathcal{GI}(\xi)}$.
\end{thm}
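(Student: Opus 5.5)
We would prove Theorem \ref{thm4.15'} by applying Theorem \ref{thm2} inside the extriangulated category $\mathcal{GI}(\xi)_n$. Since $\mathcal{GI}(\xi)_n$ is closed under extensions in $(\mathcal{B},\mathbb{E}_\xi,\mathfrak{s}_\xi)$, it inherits an extriangulated structure $(\mathcal{GI}(\xi)_n,\mathbb{E}_{\mathcal{GI}(\xi)_n},\mathfrak{s}_{\mathcal{GI}(\xi)_n})$. Weak idempotent completeness passes to it because $\mathcal{GI}(\xi)_n$ is closed under direct summands. The first task is to produce a hereditary Hovey triple $\mathcal{M}=(\mathcal{C},\mathcal{W},\mathcal{F})$ in $\mathcal{GI}(\xi)_n$ with $\mathcal{W}\cap\mathcal{F}=\mathcal{I}(\xi)$ and $\mathcal{F}=\mathcal{GI}(\xi)$. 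The natural choice is
$$\mathcal{M}=(\mathcal{GI}(\xi)_n,\ \mathcal{I}(\xi)_n,\ \mathcal{GI}(\xi)).$$
For this we must check the following.
\begin{enumerate}
\item[(a)] $(\mathcal{GI}(\xi)_n,\mathcal{I}(\xi))$ is a complete hereditary cotorsion pair in $\mathcal{GI}(\xi)_n$. This holds because $\mathcal{I}(\xi)$ is the class of injectives and $\mathcal{B}$ has enough $\xi$-injectives. The cone of $X\to I$ stays in $\mathcal{GI}(\xi)_n$ by the standard dimension-shifting lemma for $\xi$-$\mathcal{G}$injective dimension.
\item[(b)] $(\mathcal{I}(\xi)_n,\mathcal{GI}(\xi))$ is a complete hereditary cotorsion pair in $\mathcal{GI}(\xi)_n$. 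Orthogonality comes from $\mathbb{E}_\xi^{\geqslant1}(\mathcal{I}(\xi)_n,\mathcal{GI}(\xi))=0$, by induction along finite $\xi$-injective coresolutions. Completeness comes from the standard approximation of an object of finite $\xi$-$\mathcal{G}$injective dimension: a triangle $K\to G\to X$ with $G\in\mathcal{GI}(\xi)$ and $K\in\mathcal{I}(\xi)_{n-1}$, built from a $\xi$-$\mathcal{G}$injective coresolution by pulling back along complete $\xi$-injective resolutions. The dual approximation, $X\to L\to G'$ with $L\in\mathcal{I}(\xi)_n$, follows via Salce's trick using (a).
\item[(c)] $\mathcal{I}(\xi)_n$ is thick in $\mathcal{GI}(\xi)_n$. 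Closure under extensions and cocones is standard. For closure under cones, note that $A\to B\to C$ with $A,B\in\mathcal{I}(\xi)_n$ and $C\in\mathcal{GI}(\xi)_n$ gives $C$ of finite $\xi$-injective dimension. Since $\mathcal{GI}(\xi)_n\cap\mathcal{I}(\xi)_{<\infty}=\mathcal{I}(\xi)_n$, this places $C$ in $\mathcal{I}(\xi)_n$.
\end{enumerate}
Also $\mathcal{W}\cap\mathcal{F}=\mathcal{I}(\xi)_n\cap\mathcal{GI}(\xi)=\mathcal{I}(\xi)$ and $\mathcal{C}\cap\mathcal{W}=\mathcal{I}(\xi)_n$, as required.

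Next, the hypothesis of Theorem \ref{thm2} with index $m$ must be verified. We need $(^\perp\mathcal{I}(\xi)_m,\mathcal{I}(\xi)_m)$, computed in $\mathcal{GI}(\xi)_n$, to be a complete cotorsion pair there. Proposition \ref{lem4.7}, using (Ax1), (Ax2) and the set $\mathcal{S}$, gives completeness of $(^\perp\mathcal{I}(\xi)_m,\mathcal{I}(\xi)_m)$ in $\mathcal{B}$. We then restrict to $\mathcal{GI}(\xi)_n$. Since $\mathcal{I}(\xi)_m\subseteq\mathcal{GI}(\xi)_n$ for $m\leqslant n$, the special $\mathcal{I}(\xi)_m$-preenvelope $X\to Y\to Z$ of $X\in\mathcal{GI}(\xi)_n$ has $Z\in\mathcal{GI}(\xi)_n$, because $\mathcal{GI}(\xi)_n$ is closed under cones of inflations into it. The special precover $Y'\to Z'\to X$ with $Y'\in\mathcal{I}(\xi)_m$ has $Z'\in\mathcal{GI}(\xi)_n$ by extension closure. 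Hence the left orthogonal inside $\mathcal{GI}(\xi)_n$ is $^\perp\mathcal{I}(\xi)_m\cap\mathcal{GI}(\xi)_n$.

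Now apply Theorem \ref{thm2}(1) to get the complete hereditary pair
$$\bigl({}^\perp\mathcal{I}(\xi)_m\cap\mathcal{GI}(\xi)_n\cap\mathcal{I}(\xi)_n,\ \mathcal{GI}(\xi)_m\bigr).$$
We then identify $\mathcal{F}_m=(\mathcal{GI}(\xi))_m=\mathcal{GI}(\xi)_m$, whose dimension is computed inside $\mathcal{GI}(\xi)_n$. The left term simplifies to $^\perp\mathcal{I}(\xi)_m\cap\mathcal{I}(\xi)_n$, and the core is $^\perp\mathcal{I}(\xi)_m\cap\mathcal{I}(\xi)_m$. This proves part (1).

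Theorem \ref{thm2}(2) yields the hereditary Hovey triple of part (2) and the triangle equivalences
$$\underline{^\perp\mathcal{I}(\xi)_m\cap\mathcal{GI}(\xi)_m}\simeq{\rm Ho}=\underline{\mathcal{C}\cap\mathcal{F}}=\underline{\mathcal{GI}(\xi)},$$
where the stable category is taken modulo $\mathcal{I}(\xi)$.

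The main obstacle is step (b): completeness of $(\mathcal{I}(\xi)_n,\mathcal{GI}(\xi))$ inside $\mathcal{GI}(\xi)_n$ without assuming completeness of $(^\perp\mathcal{GI}(\xi),\mathcal{GI}(\xi))$ in $\mathcal{B}$. This needs the extriangulated analogue of the Auslander–Buchweitz approximation for $\xi$-$\mathcal{G}$injective dimension. We would prove it by induction on $n$, using the pullback/octahedral axiom (ET4) together with the definition of $\xi$-$\mathcal{G}$injectives via complete $\xi$-injective coresolutions.
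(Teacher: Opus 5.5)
Your reduction works and takes a different route from the paper. The paper proves the statement through Theorem \ref{thm4.15}, which it establishes by hand inside $\mathcal{GI}(\xi)_n$:
\begin{itemize}
\item the two orthogonality equalities come from Lemma \ref{lemma4.8}, Theorem \ref{thm3}, Corollary \ref{cor4.7} and Lemma \ref{lemma4.11};
\item completeness comes from Corollary \ref{cor4.11}, an (ET4)$^{\rm op}$ diagram and Lemma \ref{lemma2.18}, so it uses enough projectives in $\mathcal{GI}(\xi)_n$;
\item thickness of $\mathcal{I}(\xi)_n$ comes from Corollary \ref{cor4.7}.
\end{itemize}
Only at the end does the paper use your base triple $(\mathcal{GI}(\xi)_n,\mathcal{I}(\xi)_n,\mathcal{GI}(\xi))$, as the case $m=0$, to identify the homotopy category. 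You build that base triple first. Theorem \ref{thm2}, applied in $\mathcal{GI}(\xi)_n$, then gives parts (1), (2), the core and ${\rm Ho}(\mathcal{M}_m)={\rm Ho}(\mathcal{M})$ at once. Its hypothesis comes from restricting Proposition \ref{lem4.7}(1), exactly as in Corollary \ref{cor4.11}. This is more economical and makes the equivalence with $\underline{\mathcal{GI}(\xi)}$ transparent, since both triples share $\mathcal{W}=\mathcal{I}(\xi)_n$. It also never needs projectives of $\mathcal{GI}(\xi)_n$. The extra bookkeeping is:
\begin{itemize}
\item $\mathcal{GI}(\xi)_n$ is WIC: a summand-closed full subcategory of a WIC category is weakly idempotent complete, and for extriangulated categories this is equivalent to Condition (WIC);
\item for $m\leqslant n$, the classes $\mathcal{I}(\xi)_m$ and $\mathcal{GI}(\xi)_m$ computed inside $\mathcal{GI}(\xi)_n$ agree with those of $\mathcal{B}$, because all intermediate cycles lie in $\mathcal{GI}(\xi)_n$.
\end{itemize}

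Step (b), however, is wrong as written. For the pair $(\mathcal{I}(\xi)_n,\mathcal{GI}(\xi))$ you need two $\mathbb{E}_\xi$-triangles:
\begin{itemize}
\item $G'\to L\to X$ with $G'\in\mathcal{GI}(\xi)$ and $L\in\mathcal{I}(\xi)_n$;
\item $X\to G\to K$ with $G\in\mathcal{GI}(\xi)$ and $K\in\mathcal{I}(\xi)_{n-1}$.
\end{itemize}
You wrote $K\to G\to X$ and $X\to L\to G'$. That is the Gorenstein-projective pattern, suited to $(\mathcal{GI}(\xi),\mathcal{I}(\xi)_n)$, and the first triangle is false in general. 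Take ${\rm Mod}\,\mathbb{Z}$ with $\xi$ all short exact sequences, a special case of the setting. Here $\mathcal{GI}(\xi)=\mathcal{I}(\xi)$ and $\mathbb{Z}\in\mathcal{I}(\xi)_1\subseteq\mathcal{GI}(\xi)_1$. A conflation $K\to G\to\mathbb{Z}$ with $G$ injective splits because $\mathbb{Z}$ is projective, which would make $\mathbb{Z}$ injective. So the induction you plan for your ``main obstacle'' aims at a false statement.

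The correct triangles are exactly Theorem \ref{thm3}(2),(3), and these need no completeness hypothesis. With them, ${}^\perp\mathcal{GI}(\xi)\cap\mathcal{GI}(\xi)_n=\mathcal{I}(\xi)_n$ is Corollary \ref{cor4.7}. The equality $\mathcal{I}(\xi)_n^\perp\cap\mathcal{GI}(\xi)_n=\mathcal{GI}(\xi)$ follows by splitting $X\to G\to K$.

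There is a smaller slip in (c). Closure of $\mathcal{I}(\xi)_n$ under cones is the easy part, by Proposition \ref{cor2.20}(2). It is cocones that only land in $\mathcal{I}(\xi)_{n+1}$ and need the intersection argument. Most directly, $\mathcal{I}(\xi)_n={}^\perp\mathcal{GI}(\xi)\cap\mathcal{GI}(\xi)_n$ with ${}^\perp\mathcal{GI}(\xi)$ thick (Lemma \ref{lem4.2}) gives thickness at once.
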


The paper is organized as follows. Section \ref{Preliminaries} recalls the necessary preliminaries on extriangulated categories, cotorsion pairs, relative homological dimension, model structures, and $\xi$-$\mathcal{G}$injective objects. Section \ref{section3} is devoted to the proof of Theorem \ref{thm2}. In Section 4, we study model structures induced by objects of finite $\xi$-$\mathcal{G}$injective dimension and prove Theorem \ref{thm4}. Section \ref{section5} constructs model structures on the extriangulated category induced by objects of $\xi$-$\mathcal{G}$injective dimension at most $n$, where $n$ is a non-negative integer, and concludes with the proof of Theorem \ref{thm4.15'}.

\section{\bf Preliminaries}\label{Preliminaries}
Throughout $\mathcal{B}$ is an additive category. All subcategories are full and closed under
isomorphisms and direct summands.

\subsection{Extriangulated categories} Let us briefly recall some definitions and basic properties of extriangulated categories from \cite{NP}.

Suppose throughout that $\mathbb{E}: \mathcal{B}^{\rm op}\times \mathcal{B}\rightarrow {\rm Ab}$ is an additive bifunctor,
where ${\rm Ab}$ is the category of abelian groups. For any objects $A, C\in\mathcal{B}$, an element $\delta\in \mathbb{E}(C,A)$ is called an \emph{$\mathbb{E}$-extension}.
Let $\mathfrak{s}$ be a correspondence which associates an equivalence class $$\mathfrak{s}(\delta)=\xymatrix@C=0.8cm{[A\ar[r]^x
 &B\ar[r]^y&C]}$$ to any $\mathbb{E}$-extension $\delta\in\mathbb{E}(C, A)$. This $\mathfrak{s}$ is called a {\it realization} of $\mathbb{E}$, if it makes the diagram in \cite[Definition 2.9]{NP} commute.
 A triplet $(\mathcal{B}, \mathbb{E}, \mathfrak{s})$ is called an {\it extriangulated category} if it satisfies the following conditions.
\begin{enumerate}
\item $\mathbb{E}\colon\mathcal{B}^{\rm op}\times \mathcal{B}\rightarrow \rm{Ab}$ is an additive bifunctor.

\item $\mathfrak{s}$ is an additive realization of $\mathbb{E}$.

\item $\mathbb{E}$ and $\mathfrak{s}$  satisfy the compatibility conditions in \cite[Definition 2.12]{NP}.

 \end{enumerate}

\begin{rem}
Note that both exact categories and triangulated categories are extriangulated categories $($see \cite[Example 2.13]{NP}$)$ and extension closed subcategories of extriangulated categories are
again extriangulated $($see \cite[Remark 2.18]{NP}$)$. Moreover, there exist extriangulated categories which
are neither exact categories nor triangulated categories $($see \cite[Proposition 3.30]{NP}, \cite[Example 4.14]{ZZ} and \cite[Remark 3.3]{HZZ}$)$.
\end{rem}

We will use the following terminology.

\begin{definition}{ \emph{(\cite[Definitions 2.15 and 2.19]{NP})}} {\rm
 Let $(\mathcal{B}, \mathbb{E}, \mathfrak{s})$ be an extriangulated category.
\begin{enumerate}
\item A sequence $\xymatrix@C=1cm{A\ar[r]^x&B\ar[r]^{y}&C}$ is called a {\it conflation} if it realizes some $\mathbb{E}$-extension $\delta\in\mathbb{E}(C, A)$.
In this case, $x$ is called an {\it inflation} and $y$ is called a {\it deflation}.

\item  If a conflation $\xymatrix@C=0.6cm{A\ar[r]^x&B\ar[r]^{y}&C}$ realizes $\delta\in\mathbb{E}(C, A)$, we call the pair
$\xymatrix@C=0.6cm{(A\ar[r]^x&B\ar[r]^{y}&C, \delta)}$ an {\it $\mathbb{E}$-triangle}, and write it in the following.
\begin{center} $\xymatrix{A\ar[r]^x&B\ar[r]^{y}&C\ar@{-->}[r]^{\delta}&}$\end{center}
We usually do not write this ``$\delta$" if it is not used in the argument.

\item Let $\xymatrix{A\ar[r]^x&B\ar[r]^{y}&C\ar@{-->}[r]^{\delta}&}$ and $\xymatrix{A'\ar[r]^{x'}&B'\ar[r]^{y'}&C'\ar@{-->}[r]^{\delta'}&}$
be any pair of $\mathbb{E}$-triangles. If a triplet $(a, b, c)$ realizes $(a, c): \delta\rightarrow \delta'$, then we write it as
 $$\xymatrix{A\ar[r]^{x}\ar[d]_{a}&B\ar[r]^{y}\ar[d]_{b}&C\ar[d]_{c}\ar@{-->}[r]^{\delta}&\\
 A'\ar[r]^{x'}&B'\ar[r]^{y'}&C'\ar@{-->}[r]^{\delta'}&}$$
 and call $(a, b, c)$ a {\it morphism} of $\mathbb{E}$-triangles.
\end{enumerate}}

\end{definition}

\begin{definition}{\rm(\cite[Proposition 3.24 and Definition 3.25]{NP})} {\rm  Let $(\mathcal{B}, \mathbb{E}, \mathfrak{s})$ be an extriangulated category.
An object $I\in\mathcal{B}$ is an {\it injective object} if $\mathbb{E}(B, I)=0$ for any object $B\in\mathcal{B}$. Denote the
subcategory of injective objects by $\mathcal{I}\subseteq\mathcal{B}$. One says that $\mathcal{B}$ has {\it enough injectives} if
any object $B\in\mathcal{B}$ admits an inflation $B\longrightarrow I$ for some $I\in\mathcal{I}$.

Dually, one has the notions of {\it projective objects} and {\it having enough projectives.}}
  \end{definition}

  Liu and Nakaoka \cite{LN} defined the higher extension groups in an extriangulated
category with enough projectives and enough injectives. They showed the following result.

\begin{lem} $($\cite[Proposition 5.2]{LN}$)$\label{lem2.11} Let $(\mathcal{B}, \mathbb{E}, \mathfrak{s})$ be an extriangulated category with enough projectives and enough injectives, and $\xymatrix@C=2em{A\ar[r]^f&B\ar[r]^g&C\ar@{-->}[r]^\delta&}$
be an $\mathbb{E}$-triangle. There are long exact sequences

$\xymatrix@=2em{\cdots\ar[r]&\mathbb{E}^i(X, A)\ar[r]^{f_*}&\mathbb{E}^i(X, B)\ar[r]^{g_*}&\mathbb{E}^i(X, C)\ar[r]&\mathbb{E}^{i+1}(X, A)\ar[r]^{f_*}&\mathbb{E}^{i+1}(X, B)\ar[r]&\cdots}$

$\xymatrix@=2em{\cdots\ar[r]&\mathbb{E}^i(C, X)\ar[r]^{g^*}&\mathbb{E}^i(B, X)\ar[r]^{f^*}&\mathbb{E}^i(A, X)\ar[r]&\mathbb{E}^{i+1}(C, X)\ar[r]^{g^*}&\mathbb{E}^{i+1}(B, X)\ar[r]&\cdots}$\\
for any
object $X\in\mathcal{B}$ and $i\geqslant 0$.\end{lem}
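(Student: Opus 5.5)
The plan is to reduce everything to the six-term exact sequences of Nakaoka--Palu \cite[Corollary 3.12]{NP} by dimension shifting. Recall from \cite{LN} that for $i\geqslant 1$ one sets $\mathbb{E}^{i+1}(C,X):=\mathbb{E}(\Omega^{i}C,X)$, where $\Omega C$ is defined by an $\mathbb{E}$-triangle $\Omega C\to P\to C\dashrightarrow$ with $P$ projective. Also $\mathbb{E}^1=\mathbb{E}$ and $\mathbb{E}^0=\mathrm{Hom}$. The first step is to check that this is well defined, functorial, and \emph{balanced}: using cosyzygies $X\to I\to\Sigma X\dashrightarrow$ with $I$ injective, we also have $\mathbb{E}^{i+1}(C,X)\cong\mathbb{E}(C,\Sigma^iX)$. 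The balance comes from the isomorphisms
$$\mathbb{E}(\Omega C,X)\cong \mathrm{Cok}\bigl(\mathrm{Hom}(P,X)\to\mathrm{Hom}(\Omega C,X)\bigr)$$
and its dual. Both follow from the six-term sequences together with $\mathbb{E}(P,-)=0=\mathbb{E}(-,I)$. Together they give a natural isomorphism $\mathbb{E}(\Omega C,X)\cong\mathbb{E}(C,\Sigma X)$, and iterating handles all degrees. This is where the hypothesis of both enough projectives and enough injectives is used.

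In degrees $0,1$, both long sequences begin with the six-term exact sequences of \cite[Corollary 3.12]{NP}. We need to continue them, say for the contravariant one, past $\mathbb{E}(A,X)$. Given the $\mathbb{E}$-triangle $A\xrightarrow{f}B\xrightarrow{g}C\dashrightarrow$, take $\Omega C\to P\to C$ with $P$ projective. Apply (ET4)$^{\rm op}$ (the homotopy pullback of $g$ along $P\to C$) to get an object $M$ with $\mathbb{E}$-triangles $A\to M\to P\dashrightarrow$ and $\Omega C\to M\to B\dashrightarrow$. The first splits because $\mathbb{E}(P,A)=0$, so $M\cong A\oplus P$, and the second becomes
$$\Omega C\to A\oplus P\to B\dashrightarrow .$$
The six-term sequence of this triangle in the first variable reads
$$\mathbb{E}(B,X)\to\mathbb{E}(A\oplus P,X)=\mathbb{E}(A,X)\to\mathbb{E}(\Omega C,X)=\mathbb{E}^2(C,X)\to\mathbb{E}^2(B,X)\to\cdots.$$
For the last term we apply the same construction one level up, using syzygies of $B$.

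Since the construction relates the original triangle to a triangle one syzygy level down, we can induct. Comparing connecting maps, the portion in degrees $i,i+1$ of the long sequence for $A\to B\to C$ identifies with the portion in degrees $i-1,i$ for a shifted triangle. Repeating this with horseshoe-type triangles $\Omega A\to \Omega B\oplus P'\to\Omega C$, obtained the same way, gives exactness in every degree. The covariant sequence $\mathbb{E}^i(X,-)$ is handled dually with cosyzygies $\Sigma$ and (ET4), or by transporting via balance.

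The main obstacle is bookkeeping rather than any conceptual point. One must check that the maps obtained this way agree with the maps $f^*,g^*$ and with a well-defined connecting homomorphism. In particular they must be independent of the chosen projective resolution, and the identifications $\mathbb{E}(A\oplus P,X)=\mathbb{E}(A,X)$ must be compatible with $f^*$. I would address this first by fixing the connecting map as the composite $\mathbb{E}^i(A,X)\to\mathbb{E}^{i+1}(C,X)$ induced by $\delta$, and then verifying that the diagrams from (ET4)$^{\rm op}$ commute up to the morphisms of $\mathbb{E}$-triangles they provide.
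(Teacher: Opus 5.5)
The paper does not prove this lemma; it only quotes \cite[Proposition 5.2]{LN}. Your argument therefore has to stand on its own, and the step you put first, the balance, is wrong as written. The six-term sequence of $\Omega C\xrightarrow{\iota}P\to C\dashrightarrow$ gives $\mathrm{Cok}\bigl(\mathrm{Hom}(P,X)\to\mathrm{Hom}(\Omega C,X)\bigr)\cong\mathbb{E}(C,X)$, not $\mathbb{E}(\Omega C,X)$. In $\mathrm{Mod}R$ your formula would say $\mathrm{Ext}^2(C,X)\cong\mathrm{Ext}^1(C,X)$. Its ``dual'' is a quotient of a different Hom-group, so putting the two together gives no isomorphism. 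What is true is that, with $X\to I\xrightarrow{q}\Sigma X\dashrightarrow$, both sides are quotients of the same group: $\mathbb{E}(C,\Sigma X)\cong\mathrm{Hom}(\Omega C,\Sigma X)/\iota^*\mathrm{Hom}(P,\Sigma X)$ and $\mathbb{E}(\Omega C,X)\cong\mathrm{Hom}(\Omega C,\Sigma X)/q_*\mathrm{Hom}(\Omega C,I)$. The balance is exactly the equality of these two subgroups, and that needs an argument. Both subgroups equal $\{qw\iota\mid w\in\mathrm{Hom}(P,I)\}$: every $P\to\Sigma X$ lifts through $q$ because $\mathbb{E}(P,X)=0$, and every $\Omega C\to I$ extends along $\iota$ because $\mathbb{E}(C,I)=0$. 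Your covariant sequence is obtained from the $\Sigma$-side, so this step has to be repaired.

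Beyond that, the rotation $\Omega C\to A\oplus P\to B\dashrightarrow$ from \cite[Proposition 3.15]{NP} is correct. However, its six-term sequence stops at $\mathbb{E}(\Omega C,X)$ and gives exactness only at $\mathbb{E}(A\oplus P,X)$. So you need a new rotated or horseshoe triangle for every position in every degree. The identifications you postpone are then the whole content of your route: that the $A$-component of $\Omega C\to A\oplus P$ classifies $\delta$ up to sign, that components of later rotations represent $\Omega f$ and $\Omega g$, and that nothing depends on choices.

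Once balance is available you can avoid all of this by resolving the fixed variable instead of the varying one. For the contravariant sequence, put $Y=\Sigma^{i-1}X$ and let $\theta$ be the class of $Y\to I\xrightarrow{q}\Sigma Y$. Then $h\mapsto h^*\theta$ is an epimorphism $\mathrm{Hom}(-,\Sigma Y)\to\mathbb{E}(-,Y)$, natural in the first variable. Set $\partial(h^*\theta)=h_*\delta$; this is well defined since $\mathbb{E}(C,I)=0$.
\begin{itemize}
\item Exactness at $\mathbb{E}(C,\Sigma Y)$ and at $\mathbb{E}(B,Y)$ is read off the six-term sequences of $A\to B\to C$. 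Here $\partial$ precomposed with that epimorphism is $h\mapsto h_*\delta$.
\item Exactness at $\mathbb{E}(A,Y)$ is a lifting argument: $h_*\delta=0$ forces $h=kf$ for some $k\colon B\to\Sigma Y$, hence $h^*\theta=f^*(k^*\theta)$.
\end{itemize}
The covariant sequence is the mirror image, using $\mathbb{E}^{i+1}(X,-)=\mathbb{E}(\Omega^iX,-)$ and the natural epimorphism $\mathrm{Hom}(\Omega Z,-)\to\mathbb{E}(Z,-)$. In exchange for the bookkeeping, your route has one advantage: combined with this last argument, it would need only enough projectives and no balance at all.
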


The following condition is analogous to the weakly idempotent completeness in exact category (see \cite[Condition 5.8]{NP}).

\begin{cond} \label{cond:4.11} \emph{({\rm Condition (WIC)})}  Consider the following conditions.

\begin{enumerate}
\item[\rm (1)]  Let $f\in\mathcal{B}(A, B), g\in\mathcal{B}(B, C)$ be any composable pair of morphisms. If $gf$ is an inflation, then so is $f$.

\item[\rm (2)] Let $f\in\mathcal{B}(A, B), g\in\mathcal{B}(B, C)$ be any composable pair of morphisms. If $gf$ is a deflation, then so is $g$.
\end{enumerate}

\end{cond}

\begin{remark}\label{Re:4.12}

\emph{(1)} If $\mathcal{B}$ is an exact category, then Condition \emph{(WIC)} is equivalent to $\mathcal{B}$ is
weakly idempotent complete \emph{(see \cite[Proposition 7.6]{B"u})}.

\emph{(2)} If $\mathcal{B}$ is a triangulated category, then Condition \emph{(WIC)} is automatically satisfied.
\end{remark}

It is worth noting that an extriangulated category $\mathcal{B}$ satisfies Condition {\rm (WIC)} if and only if $\mathcal{B}$ is weakly idempotent complete (see \cite[Proposition C]{K}).

{\bf From now until the end of the paper, we always assume that $\mathcal{B}:=(\mathcal{B}, \mathbb{E}, \mathfrak{s})$ is a WIC extriangulated category.}

\begin{lem}\label{lem1} \emph{(\cite[Proposition 3.15]{NP})}
Let $\xymatrix@C=1.8em{A_1\ar[r]^{x_1}&B_1\ar[r]^{y_1}&C\ar@{-->}[r]^{\delta_1}&}$ and $\xymatrix@C=1.8em{A_2\ar[r]^{x_2}&B_2\ar[r]^{y_2}&C\ar@{-->}[r]^{\delta_2}&}$ be any pair of $\mathbb{E}$-triangles. Then there exists a commutative diagram
in $\mathcal{B}$
$$\xymatrix{
    & A_2\ar[d]_{m_2} \ar@{=}[r] & A_2 \ar[d]^{x_2} \\
  A_1 \ar@{=}[d] \ar[r]^{m_1} & M \ar[d]_{e_2} \ar[r]^{e_1} & B_2\ar[d]^{y_2} \\
  A_1 \ar[r]^{x_1} & B_1\ar[r]^{y_1} & C   }
  $$
  which satisfies $\mathfrak{s}(y^*_2\delta_1)=\xymatrix@C=2em{[A_1\ar[r]^{m_1}&M\ar[r]^{e_1}&B_2]}$ and
  $\mathfrak{s}(y^*_1\delta_2)=\xymatrix@C=2em{[A_2\ar[r]^{m_2}&M\ar[r]^{e_2}&B_1].}$

\end{lem}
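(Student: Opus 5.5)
The plan is to build the middle object $M$ by realizing the pulled-back extension $y_2^*\delta_1\in\mathbb{E}(B_2,A_1)$, and then to extract the second $\mathbb{E}$-triangle from the octahedral-type axiom (ET4)$^{\rm op}$ of \cite[Definition 2.12]{NP}. The first row is automatic from the definition. By (ET2) choose a realization $\mathfrak{s}(y_2^*\delta_1)=[A_1\xrightarrow{m_1}M\xrightarrow{e_1}B_2]$. The pair $(1_{A_1},y_2)$ is a morphism of extensions $y_2^*\delta_1\to\delta_1$, so (ET3)$^{\rm op}$ gives a morphism $e_2\colon M\to B_1$ with $e_2m_1=x_1$ and $y_1e_2=y_2e_1$, making the square $(m_1,e_1;x_1,y_1)$ commute.

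To produce $m_2$ and the column, I would apply (ET4)$^{\rm op}$ to the deflations $e_1\colon M\to B_2$ (kernel-side term $A_1$) and $y_2\colon B_2\to C$ (kernel-side term $A_2$). This yields $\mathbb{E}$-triangles $A_1\to E\xrightarrow{p}A_2\overset{\theta}{\dashrightarrow}$ and $E\to M\xrightarrow{y_2e_1}C\dashrightarrow$, together with the compatibility identities of (ET4)$^{\rm op}$. In particular, $\theta=x_2^*(y_2^*\delta_1)=(y_2x_2)^*\delta_1$. Since $y_2x_2=0$ we get $\theta=0$, so the first triangle splits and $E\cong A_1\oplus A_2$ (using \cite[Corollary 3.5]{NP}-type splitting). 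Composing the inclusion $A_2\to E$ with $E\to M$ gives $m_2\colon A_2\to M$, and the (ET4)$^{\rm op}$ commutativities give $e_1m_2=x_2$.

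It remains to show that $A_2\xrightarrow{m_2}M\xrightarrow{e_2}B_1$ is a conflation realizing $y_1^*\delta_2$, and that $e_2m_2=0$ is compatible with the diagram. My first attempt is to realize $y_1^*\delta_2$ independently as $A_2\xrightarrow{m_2'}M'\xrightarrow{e_2'}B_1$ and to form the analogous map $e_1'\colon M'\to B_2$ via (ET3)$^{\rm op}$. I would then compare $M$ and $M'$ through the two-sided description: both should be the ``homotopy pullback'', with kernel of the composite deflation $y_2e_1=y_1e_2$ equal to $A_1\oplus A_2$. Once the identification is made, the equality of extensions follows by tracking the formulas $(ET4)^{\rm op}$ gives for the extension of $E\to M\to C$, namely that its pushforward along $E\to A_2$ is $\delta_2$ and along $E\to A_1$ is $\delta_1$.

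The main obstacle is this last identification: showing that the extension class of $A_2\to M\to B_1$ is exactly $y_1^*\delta_2$, rather than only some extension with the correct end terms. I would handle it with the additivity of $\mathfrak{s}$ and the explicit extension identities in (ET4)$^{\rm op}$, together with \cite[Proposition 3.7]{NP}-style uniqueness of realizations up to isomorphism. The symmetric roles of the two triangles should make the computation mirror the first row. A possible shortcut, if (ET4)$^{\rm op}$ bookkeeping becomes heavy, is to note that $M$ realizes $(y_1\oplus y_2)$-pullbacks: apply the construction to the direct sum triangle $\delta_1\oplus\delta_2$ pulled back along the diagonal-type map, and then read off both required realizations from additivity of $\mathfrak{s}$.
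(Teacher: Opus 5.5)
The paper gives no argument of its own here; it simply quotes \cite[Proposition 3.15]{NP}. Measured against the statement, your proposal has a genuine gap, located exactly at the step you flag as the main obstacle. Your construction is fine up to $m_1,e_1$ and an $m_2$ with $e_1m_2=x_2$. The trouble is that for the maps you then fix, the remaining claim is false in general.

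Write $A_1\xrightarrow{d}E\xrightarrow{p}A_2$ and $E\xrightarrow{h'}M\xrightarrow{y_2e_1}C\overset{\delta''}{\dashrightarrow}$ for the output of (ET4)$^{\rm op}$. Your $e_2$ comes from the realization property of $\mathfrak{s}$ applied to $(1_{A_1},y_2)$. (It does not come from (ET3)$^{\rm op}$, which produces the first component of a morphism.) Your $m_2=h's$ comes from an arbitrary section $s$ of $p$, chosen independently of $e_2$.
\begin{itemize}
\item Since $e_2$ may be replaced by $e_2+x_1ue_1$ for any $u\colon B_2\to A_1$, which changes $e_2m_2$ by $x_1ux_2$, the sequence $A_2\xrightarrow{m_2}M\xrightarrow{e_2}B_1$ is in general not even a complex. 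This already happens for split triangles.
\item Comparing with an independent realization $M'$ of $y_1^*\delta_2$ therefore cannot vindicate your $e_2$. At best it lets you redefine $m_2,e_2$ by transport, and that needs an isomorphism $M\cong M'$ matching the splittings.
\item Such matching runs into a second missing point: (ET4)$^{\rm op}$ does not say that $\delta''$ pushes forward to $\delta_1$. It gives only $p_*\delta''=\delta_2$ and $d_*y_2^*\delta_1=y_2^*\delta''$. For a retraction $r$ of $d$ you only get $y_2^*(r_*\delta''-\delta_1)=0$, i.e.\ $r_*\delta''=\delta_1+f_*\delta_2$ for some $f\colon A_2\to A_1$. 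Changing $r$ by $gp$ changes this by $g_*\delta_2$, so the answer genuinely depends on the choice.
\item Finally, additivity of $\mathfrak{s}$ realizes $\delta_1\oplus\delta_2$ over $C\oplus C$. It does not split a summand off a conflation $A_1\oplus A_2\to M\to C$.
\end{itemize}

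What is missing is to normalize the splitting and to take $e_2$ from (ET4) rather than from the realization property.
\begin{enumerate}
\item Choose $r,s$ from $E\cong A_1\oplus A_2$. By exactness of $\mathcal{B}(A_2,A_1)\to\mathbb{E}(C,A_1)\xrightarrow{y_2^*}\mathbb{E}(B_2,A_1)$ \cite{NP}, you can replace $r,s$ by $r-fp$ and $s+df$. Then still $rd=1$, $ps=1$, $rs=0$, and now $r_*\delta''=\delta_1$ and $p_*\delta''=\delta_2$.
\item Put $m_2=h's$, so that $e_1m_2=x_2ps=x_2$.
\item Apply (ET4) to the split conflation $A_2\xrightarrow{s}E\xrightarrow{r}A_1$ and to $E\xrightarrow{h'}M\xrightarrow{y_2e_1}C$. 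This gives a conflation $A_1\to X\to C$ realizing $r_*\delta''=\delta_1$, so $X\cong B_1$ compatibly with $x_1,y_1$.
\item It also gives a conflation $A_2\xrightarrow{m_2}M\xrightarrow{e_2}B_1$. By compatibility (iii), its class $\theta$ satisfies $s_*\theta=y_1^*\delta''$, whence $\theta=p_*s_*\theta=y_1^*\delta_2$.
\item The squares of (ET4) give $e_2h'=x_1r$ and $y_1e_2=y_2e_1$, hence $e_2m_1=e_2h'd=x_1$.
\end{enumerate}

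Your ``shortcut'' becomes a proof in the same way and needs no normalization. With $\iota_i\colon A_i\to A_1\oplus A_2$ the injections, realize $\iota_{1*}\delta_1+\iota_{2*}\delta_2\in\mathbb{E}(C,A_1\oplus A_2)$ as $A_1\oplus A_2\xrightarrow{h'}M\xrightarrow{h}C$. Then apply (ET4) once to each split inflation $\iota_i$ followed by $h'$. Each application yields one of the two required conflations, and the common deflation $h$ gives $y_1e_2=y_2e_1$.
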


The following definitions are quoted verbatim from \cite[Section 3]{HZZ}. A class of $\mathbb{E}$-triangles $\xi$ is {\it closed under base change} if for any $\mathbb{E}$-triangle $$\xymatrix@C=2em{A\ar[r]^x&B\ar[r]^y&C\ar@{-->}[r]^{\delta}&\in\xi}$$ and any morphism $c\colon C' \to C$, any $\mathbb{E}$-triangle  $\xymatrix@C=2em{A\ar[r]^{x'}&B'\ar[r]^{y'}&C'\ar@{-->}[r]^{c^*\delta}&}$ belongs to $\xi$.

Dually, a class of  $\mathbb{E}$-triangles $\xi$ is {\it closed under cobase change} if for any $\mathbb{E}$-triangle $$\xymatrix@C=2em{A\ar[r]^x&B\ar[r]^y&C\ar@{-->}[r]^{\delta}&\in\xi}$$ and any morphism $a\colon A \to A'$, any $\mathbb{E}$-triangle  $\xymatrix@C=2em{A'\ar[r]^{x'}&B'\ar[r]^{y'}&C\ar@{-->}[r]^{a_*\delta}&}$ belongs to $\xi$.

A class of $\mathbb{E}$-triangles $\xi$ is called {\it saturated} if in the situation of Lemma \ref{lem1}, whenever  \\
$\xymatrix@C=2em{A_2\ar[r]^{x_2}&B_2\ar[r]^{y_2}&C\ar@{-->}[r]^{\delta_2 }&}$
 and $\xymatrix@C=2em{A_1\ar[r]^{m_1}&M\ar[r]^{e_1}&B_2\ar@{-->}[r]^{y_2^{\ast}\delta_1}&}$
 belong to $\xi$,  the  $\mathbb{E}$-triangle $$\xymatrix@C=2em{A_1\ar[r]^{x_1}&B_1\ar[r]^{y_1}&C\ar@{-->}[r]^{\delta_1 }&}$$  belongs to $\xi$.

An $\mathbb{E}$-triangle $\xymatrix@C=2em{A\ar[r]^x&B\ar[r]^y&C\ar@{-->}[r]^{\delta}&}$ is called {\it split} if $\delta=0$. It is easy to see that it is split if and only if $x$ is section or $y$ is retraction. The full subcategory  consisting of the split $\mathbb{E}$-triangles will be denoted by $\Delta_0$.

  \begin{definition} \emph{(\cite[Definition 3.1]{HZZ})}\label{def:proper class} {\rm  Let $\xi$ be a class of $\mathbb{E}$-triangles which is closed under isomorphisms. Then $\xi$ is called a {\it proper class} of $\mathbb{E}$-triangles if the following conditions hold:

  \begin{enumerate}
\item  $\xi$ is closed under finite coproducts and $\Delta_0\subseteq \xi$.

\item $\xi$ is closed under base change and cobase change.

\item $\xi$ is saturated.

  \end{enumerate}}
  \end{definition}

 \begin{definition} \emph{(\cite[Definition 4.1]{HZZ})}
 {\rm An object $I\in\mathcal{B}$  is called {\it $\xi$-injective}  if for any $\mathbb{E}$-triangle $$\xymatrix{A\ar[r]^x& B\ar[r]^y& C \ar@{-->}[r]^{\delta}& }$$ in $\xi$, the induced sequence of abelian groups $\xymatrix@C=0.6cm{0\ar[r]& \mathcal{B}(C,I)\ar[r]& \mathcal{B}(B,I)\ar[r]&\mathcal{B}(A,I)\ar[r]& 0}$ is exact. Dually, we have the definition of {\it $\xi$-projective}.}
\end{definition}

We denote by $\mathcal{I(\xi)}$ (resp., $\mathcal{P(\xi)}$) the class of $\xi$-injective (resp., $\xi$-projective) objects of $\mathcal{B}$. It follows from the definition that this subcategory $\mathcal{I}(\xi)$ and $\mathcal{P}(\xi)$ are full, additive, closed under isomorphisms and direct summands.

 An extriangulated  category $(\mathcal{B}, \mathbb{E}, \mathfrak{s})$ is said to  have {\it  enough
$\xi$-injectives} \ (resp., {\it  enough $\xi$-projectives}) provided that for each object $A$ there exists an $\mathbb{E}$-triangle $\xymatrix@C=2em{A\ar[r]& I\ar[r]& K\ar@{-->}[r]&}$ (resp., $\xymatrix@C=2.1em{K\ar[r]& P\ar[r]&A\ar@{-->}[r]& }$) in $\xi$ with $I\in\mathcal{I}(\xi)$ (resp., $P\in\mathcal{P}(\xi)$).\\

\subsection{Cotorsion pairs} 
Let $\mathcal{X}$ and $\mathcal{Y}$ be subcategories of $\mathcal{B}$.  If $\mathbb{E}(X, Y)=0$ for all $X\in\mathcal{X}$ and $Y\in\mathcal{Y}$, then we write $\mathbb{E}(\mathcal{X}, \mathcal{Y})=0$. Put
\vspace{2mm}
\begin{center}
$\mathcal{X}^\perp=\{Z\in\mathcal{B}\mid \mathbb{E}(X, Z)=0, \forall X\in\mathcal{X}\}$\hspace{3mm}and\hspace{3mm}$^\perp\mathcal{Y}=\{Z\in\mathcal{B}\mid \mathbb{E}(Z, Y)=0, \forall Y\in\mathcal{Y}\}$.
\end{center}
Given an $\mathbb{E}$-triangle $\xymatrix{A \ar[r]^{x} & B \ar[r]^{y} & C \ar@{-->}[r]^{\delta}&,}$ we call $A$ the \emph{cocone} of $y:B\rightarrow C$ and $C$ the \emph{cone} of $x:A\rightarrow B$.

\begin{definition} {\rm Let 
$\mathcal{Z}$ be a subcategory of $\mathcal{B}$.

(1) $\mathcal{Z}$ is {\it closed under cocones of deflations} if for any $\mathbb{E}$-triangle $\xymatrix{A\ar[r]^x&B\ar[r]^y&C\ar@{-->}[r]^\delta&}$ which satisfies $B, C\in\mathcal{Z}$, we have $A\in\mathcal{Z}$.

(2) $\mathcal{Z}$ is {\it closed under cones of inflations} if for any $\mathbb{E}$-triangle $\xymatrix{A\ar[r]^x&B\ar[r]^y&C\ar@{-->}[r]^\delta&}$ which satisfies $A, B\in\mathcal{Z}$, we have $C\in\mathcal{Z}$.

(3) $\mathcal{Z}$ is {\it closed under extensions} if for any $\mathbb{E}$-triangle $\xymatrix{A\ar[r]^x&B\ar[r]^y&C\ar@{-->}[r]^\delta&}$ which satisfies $A, C\in\mathcal{Z}$, we have $B\in\mathcal{Z}$.}
\end{definition}

\begin{definition} {\rm 

 (1) A pair $(\mathcal{X, Y})$  of classes of objects in $\mathcal{B}$ is  {\it cotorsion pair}, provided that $\mathcal{X}^\perp=\mathcal{Y}$ and $^\perp\mathcal{Y}=\mathcal{X}$.

 (2) A  cotorsion pair $(\mathcal{X}, \mathcal{Y})$ is {\it complete} if for any object $C\in\mathcal{B}$, there are $\mathbb{E}$-triangles
 \vspace{2mm}
\begin{center}
  $\xymatrix{Y_C\ar[r]&X_C\ar[r]&C\ar@{-->}[r]&}$ and $\xymatrix{C\ar[r]&Y^C\ar[r]&X^C\ar@{-->}[r]&,}$
\end{center}
where $X_C, X^C\in\mathcal{X}$ and  $Y_C, Y^C\in\mathcal{Y}$. In this case,
we call $\mathcal X\cap\mathcal Y$ the core of the cotorsion pair $(\mathcal X,\mathcal Y)$.

(3) A cotorsion pair $(\mathcal{X}, \mathcal{Y})$ is \emph{hereditary} if $\mathcal{X}$ is closed under cocones of deflations and $\mathcal{Y}$ is closed under cones of inflations.
\vspace{1mm}}
\end{definition}

Sometimes it is convenient to use the following description of complete cotorsion pair.

\begin{fact}\label{fact2.18} {\rm Let 
$\mathcal{X, Y}$ be two classes of objects in $\mathcal{B}$ which are closed under isomorphisms and direct summands. Then $(\mathcal{X}, \mathcal{Y})$ is a complete cotorsion pair if it satisfies the following conditions:

(a) $\mathbb{E}(\mathcal{X}, \mathcal{Y})=0$.

(b) For any $C\in\mathcal{B}$, there are $\mathbb{E}$-triangles

\qquad $\xymatrix{Y_C\ar[r]&X_C\ar[r]&C\ar@{-->}[r]&}$ and $\xymatrix{C\ar[r]&Y^C\ar[r]&X^C\ar@{-->}[r]&}$

\noindent satisfying $X_C, X^C\in\mathcal{X}$ and  $Y_C, Y^C\in\mathcal{Y}$.
\vspace{1mm}}
\end{fact}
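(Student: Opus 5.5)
To prove Fact \ref{fact2.18}, I would show directly that $\mathcal{X}^\perp=\mathcal{Y}$ and ${}^\perp\mathcal{Y}=\mathcal{X}$. Completeness is then immediate from (b). By (a), $\mathcal{Y}\subseteq\mathcal{X}^\perp$ and $\mathcal{X}\subseteq{}^\perp\mathcal{Y}$, so only the reverse inclusions remain. By symmetry I treat only ${}^\perp\mathcal{Y}\subseteq\mathcal{X}$; the argument for $\mathcal{X}^\perp\subseteq\mathcal{Y}$ is dual, using the second $\mathbb{E}$-triangle in (b).

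Let $C\in{}^\perp\mathcal{Y}$ and take the $\mathbb{E}$-triangle $\xymatrix{Y_C\ar[r]^{x}&X_C\ar[r]^{y}&C\ar@{-->}[r]^{\delta}&}$ from (b). Since $Y_C\in\mathcal{Y}$, we have $\mathbb{E}(C,Y_C)=0$, so $\delta=0$ and the $\mathbb{E}$-triangle splits. As recalled before Definition \ref{def:proper class}, this means $y$ is a retraction, so there is $s\colon C\to X_C$ with $ys=1_C$. Then $C$ is a direct summand of $X_C$: the idempotent $sy$ on $X_C$ splits through $C$, and by additivity $X_C\cong C\oplus Y_C$, which one can also get from the realization of the split extension $0$ being the split sequence. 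Since $\mathcal{X}$ is closed under direct summands and isomorphisms, $C\in\mathcal{X}$.

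Dually, for $C\in\mathcal{X}^\perp$, the triangle $C\to Y^C\to X^C$ has extension class in $\mathbb{E}(X^C,C)=0$, hence splits. Then $C$ is a direct summand of $Y^C\in\mathcal{Y}$, so $C\in\mathcal{Y}$.

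The only point needing any care is the identification of a split $\mathbb{E}$-triangle with a direct sum decomposition $X_C\cong C\oplus Y_C$. This follows from the additivity of the realization $\mathfrak{s}$: $\mathfrak{s}(0)$ is the class of the split sequence $A\to A\oplus C\to C$, and realizations are unique up to isomorphism of the middle term. So I do not expect a real obstacle; the fact is essentially a formal consequence of the definitions.
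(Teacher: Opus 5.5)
Your proof is correct. The paper states this as a standard fact without proof, and your argument is exactly the routine verification it leaves implicit: for $C\in{}^\perp\mathcal{Y}$ the class $\delta\in\mathbb{E}(C,Y_C)=0$ forces the $\mathbb{E}$-triangle to realize $\mathfrak{s}(0)=[Y_C\to Y_C\oplus C\to C]$, so $X_C\cong Y_C\oplus C$ and $C\in\mathcal{X}$ by closure under isomorphisms and direct summands, and dually for $\mathcal{X}^\perp\subseteq\mathcal{Y}$. Note that the direct-sum decomposition comes from the identification of $\mathfrak{s}(0)$ with the split sequence, not merely from $y$ having a section, and this is why no appeal to weak idempotent completeness is needed.
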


Complete cotorsion pair here is also called cotorsion pair in \cite{NP, HZZ, AT}. The following result gives several equivalent conditions of a cotorsion pair being complete.

 \begin{lem}\label{lemma2.18} Let $(\mathcal{B}, \mathbb{E},\mathfrak{s})$ be an extriangulated category with enough projectives and enough injectives, and $(\mathcal{X}, \mathcal{Y})$ a cotorsion pair in $(\mathcal{B}, \mathbb{E},\mathfrak{s})$. Then the following statements are equivalent:

{\rm (1)}~ $(\mathcal{X}, \mathcal{Y})$ is complete.

{\rm (2)}~For any $A\in \mathcal{B}$, there exists an $\mathbb{E}$-triangle $\xymatrix{Y'\ar[r]&X'\ar[r]&A\ar@{-->}[r]&}$ with $X'\in \mathcal{X}$ and $Y'\in\mathcal{Y}$.

{\rm (3)}~For any $A\in \mathcal{B}$, there exists an $\mathbb{E}$-triangle $\xymatrix@=2em{A\ar[r]&Y\ar[r]&X\ar@{-->}[r]&}$ with $X\in \mathcal{X}$ and $Y\in\mathcal{Y}$.
\end{lem}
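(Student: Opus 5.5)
The plan is to prove the cycle of implications $(1)\Rightarrow(2)\Rightarrow(1)$ and $(1)\Rightarrow(3)\Rightarrow(1)$. The implications $(1)\Rightarrow(2)$ and $(1)\Rightarrow(3)$ are immediate from the definition of a complete cotorsion pair. By duality (enough projectives and enough injectives are assumed symmetrically), it suffices to prove $(2)\Rightarrow(3)$. Once we have (3) as well, Fact \ref{fact2.18} gives (1), since $\mathbb{E}(\mathcal{X},\mathcal{Y})=0$ holds for any cotorsion pair.

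For $(2)\Rightarrow(3)$, fix $A\in\mathcal{B}$. Because $\mathcal{B}$ has enough injectives, choose an $\mathbb{E}$-triangle $A\to I\to C\dashrightarrow$ with $I$ injective. Apply (2) to $C$ to get $Y'\to X'\to C\dashrightarrow$ with $X'\in\mathcal{X}$ and $Y'\in\mathcal{Y}$. These are two $\mathbb{E}$-triangles ending in the same object $C$, so Lemma \ref{lem1} applies. It produces an object $M$ together with $\mathbb{E}$-triangles
\[
A\to M\to X'\dashrightarrow \quad\text{and}\quad Y'\to M\to I\dashrightarrow.
\]
The first triangle is the one we want, provided $M\in\mathcal{Y}$.

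To show $M\in\mathcal{Y}$, recall that $\mathcal{Y}=\mathcal{X}^\perp$ is closed under extensions. This follows from the long exact sequence in $\mathbb{E}(X,-)$, via Lemma \ref{lem2.11} or directly from the exactness of $\mathbb{E}(X,-)$ at the middle term \cite[Corollary 3.12]{NP}. Moreover, $I\in\mathcal{Y}$ because $\mathbb{E}(\mathcal{B},I)=0$. Applying extension-closure to the triangle $Y'\to M\to I\dashrightarrow$ then gives $M\in\mathcal{Y}$. The dual argument, using enough projectives, proves $(3)\Rightarrow(2)$ in the same way.

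The main point requiring care is the correct use of Lemma \ref{lem1}: one must check which pullback triangle has $A$ as its first term and $X'$ as its last. With the lemma applied to $\delta_1$ from $A\to I\to C$ and $\delta_2$ from $Y'\to X'\to C$, the triangle $A_1\to M\to B_2$ is exactly $A\to M\to X'$, and $A_2\to M\to B_1$ is exactly $Y'\to M\to I$. Everything else is routine.
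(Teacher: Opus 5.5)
Your proposal is correct and follows the paper's proof essentially step for step. Both arguments take an injective-cover triangle $A\to I\to C$, apply (2) to $C$, use Lemma \ref{lem1} to obtain $A\to M\to X'$ and $Y'\to M\to I$, conclude $M\in\mathcal{Y}$ by extension-closure since $I\in\mathcal{Y}$, and obtain $(3)\Rightarrow(2)$ by the dual argument.
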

 \begin{proof} $(1)\Rightarrow(2)$ and $(1)\Rightarrow (3)$ are obvious.

 $(2)\Rightarrow (3).$ For any $A\in \mathcal{B}$, there is an $\mathbb{E}$-triangle $\xymatrix@=2em{A\ar[r]&I\ar[r]&K\ar@{-->}[r]&}$ with $I\in \mathcal{I}$ since $(\mathcal{B}, \mathbb{E},\mathfrak{s})$ has enough injectives. By (2), there exists an $\mathbb{E}$-triangle $\xymatrix{Y'\ar[r]&X'\ar[r]&K\ar@{-->}[r]&}$ with $X'\in \mathcal{X}$ and $Y'\in\mathcal{Y}$. It follows from Lemma \ref{lem1} that there is a commutative diagram of $\mathbb{E}$-triangles
 $$\xymatrix{&Y'\ar[d]\ar@{=}[r]&Y'\ar[d]&\\
A\ar@{=}[d]\ar[r]&Y\ar[r]\ar[d]&X'\ar@{-->}[r]\ar[d]&\\
A\ar[r]&I\ar@{-->}[d]\ar[r]&K\ar@{-->}[d]\ar@{-->}[r]&\\
&&&}$$
 Since $Y'\in\mathcal{Y}$ and $I\in\mathcal{I}\subseteq\mathcal{Y}$, one gets $Y\in\mathcal{Y}$. Hence there is  $\mathbb{E}$-triangle $\xymatrix@=2em{A\ar[r]&Y\ar[r]&X'\ar@{-->}[r]&}$ with $X'\in \mathcal{X}$ and $Y\in\mathcal{Y}$.

 $(3)\Rightarrow (2).$ The proof is dual to that of $(2)\Rightarrow (3)$.

 $(2)\Rightarrow (1)$ is clear because the equivalence of (2) and (3).
 \end{proof}

 The following result gives several equivalent conditions of a complete cotorsion pair being hereditary.

\begin{lem}\label{lem2.18}$($\cite[Proposition 2.18]{HZZZ}$)$ Let 
$(\mathcal{X}, \mathcal{Y})$ be a complete cotorsion pair. Then the following statements are equivalent:

{\rm (1)} \  $\mathcal{Y}$ is closed under cones of inflations.

{\rm (2)} \  $\mathcal{X}$ is closed under cocones of deflations.

{\rm (3)} \   $(\mathcal{X}, \mathcal{Y})$ is hereditary.\\
Moreover, if $\mathcal{B}$ has enough projectives and enough
 injectives, then the above conditions are also equivalent to:

{\rm (4)} \   $\mathbb{E}^{i}(X, Y)=0$ for each $i\geqslant2$ and all $X\in{\mathcal{X}}$ and $Y\in{\mathcal{Y}}$.

\end{lem}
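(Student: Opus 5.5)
The plan is to prove the cycle $(3)\Rightarrow(1)$, $(3)\Rightarrow(2)$ directly, then $(1)\Rightarrow(2)$ and its dual $(2)\Rightarrow(1)$, which together give $(3)$. Under enough projectives and injectives I will then add $(3)\Leftrightarrow(4)$ using Lemma \ref{lem2.11}. The implications $(3)\Rightarrow(1)$ and $(3)\Rightarrow(2)$ hold by definition of hereditary. Since $(1)$ and $(2)$ are formally dual, it suffices to prove $(1)\Rightarrow(2)$ carefully and to argue $(2)\Rightarrow(1)$ by duality.

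For $(1)\Rightarrow(2)$, let $A\to B\to C\dashrightarrow$ be an $\mathbb{E}$-triangle with $B,C\in\mathcal{X}$, and let $Y\in\mathcal{Y}$; we must show $\mathbb{E}(A,Y)=0$. First use completeness on the $\mathcal{Y}$-side to dimension-shift $Y$. Take an $\mathbb{E}$-triangle $Y\to Y^Y\to X^Y\dashrightarrow$ with $Y^Y\in\mathcal{Y}$ and $X^Y\in\mathcal{X}$. By $(1)$, $X^Y$ is the cone of an inflation between objects of $\mathcal{Y}$, so $X^Y\in\mathcal{X}\cap\mathcal{Y}$. Next apply the dual of Lemma \ref{lem1} (the pushout form of (ET4)) to $A\to B$ and to a representative $Y\to E\to A$ of a given $\delta\in\mathbb{E}(A,Y)$. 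This produces an object $F$ together with $\mathbb{E}$-triangles $Y\to F\to B\dashrightarrow$ and $E\to F\to C\dashrightarrow$ compatible with the original ones. Since $\mathbb{E}(B,Y)=0$, the first of these splits, so $F\cong Y\oplus B$. Chasing the compatibility then shows that $\delta$ factors through the connecting morphism $\mathcal{B}(A,\cdot)\to\mathbb{E}(C,\cdot)$ of the second triangle.

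The remaining step uses the six-term exact sequences of \cite{NP} for $Y\to Y^Y\to X^Y$. Combined with $\mathbb{E}(C,Y^Y)=0$ and $\mathbb{E}(C,X^Y)=0$ (since $C\in\mathcal{X}$), and with the vanishing $\mathbb{E}(B,\cdot)=0$ on $\mathcal{Y}$, this should show that every morphism $A\to X^Y$ extends along $A\to B$. Hence the connecting map $\mathcal{B}(A,X^Y)\to\mathbb{E}(A,Y)$ is zero on the relevant classes, and $\delta=0$. The dual argument, using $\mathcal{X}$-approximations $Y_C\to X_C\to C$ together with the (ET4) form of Lemma \ref{lem1}, gives $(2)\Rightarrow(1)$.

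For $(4)$, assume enough projectives and injectives, so that Lemma \ref{lem2.11} supplies higher extensions and long exact sequences. For $(2)\Rightarrow(4)$, take $X\in\mathcal{X}$ and an $\mathbb{E}$-triangle $K\to P\to X$ with $P$ projective. Then $P\in\mathcal{X}$, so $K\in\mathcal{X}$ by $(2)$, and hence $\mathbb{E}^{2}(X,Y)\cong\mathbb{E}(K,Y)=0$; induction on $i$ handles all $i\geqslant 2$. For $(4)\Rightarrow(1)$, let $Y_1\to Y_2\to Z$ be an $\mathbb{E}$-triangle with $Y_1,Y_2\in\mathcal{Y}$. The long exact sequence gives $\mathbb{E}(X,Y_2)\to\mathbb{E}(X,Z)\to\mathbb{E}^2(X,Y_1)=0$, so $\mathbb{E}(X,Z)=0$ for all $X\in\mathcal{X}$, and thus $Z\in\mathcal{X}^\perp=\mathcal{Y}$.

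The main obstacle is $(1)\Rightarrow(2)$ without higher $\mathbb{E}$: there is no $\mathbb{E}^2$ available, so the vanishing of the ``Yoneda composite'' must be carried out by hand with the (ET4)/(ET4)$^{\rm op}$ diagrams. I would first try to make the argument above precise by realizing $\delta$ and tracking the induced morphism into the triangle $Y\to Y^Y\to X^Y$. If that diagram chase becomes unwieldy, I would instead follow \cite[Proposition 2.18]{HZZZ} directly, as the statement is attributed there.
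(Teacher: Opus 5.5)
The paper gives no proof of this lemma; it only quotes \cite[Proposition 2.18]{HZZZ}. Your arguments for (3)$\Rightarrow$(1),(2), for (2)$\Rightarrow$(4) (shifting along $K\to P\to X$ with $P$ projective) and for (4)$\Rightarrow$(1) are fine. The one nontrivial implication, however, has a genuine gap: (1)$\Rightarrow$(2) without higher extensions. There are two problems.

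\emph{The object $F$ does not exist in general.} The dual of Lemma \ref{lem1} applies to two $\mathbb{E}$-triangles with the same \emph{first} term, and $A\to B\to C$ and $Y\to E\to A$ do not share one. No axiom yields an $F$ with $\mathbb{E}$-triangles $Y\to F\to B$ and $E\to F\to C$ compatible with $\delta$. In fact, an $\mathbb{E}$-triangle $Y\to F\to B$ restricting to $\delta$ along $x\colon A\to B$ means exactly that $\delta\in x^*\mathbb{E}(B,Y)=0$. So the existence of $F$ is equivalent to what you want to prove.

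\emph{The closing step is circular.} Knowing that every $A\to X^Y$ factors through $x$ shows that the connecting map $\mathcal{B}(A,X^Y)\to\mathbb{E}(A,Y)$ is zero. This only gives that $\mathbb{E}(A,Y)\to\mathbb{E}(A,Y^Y)$ is injective. To conclude $\delta=0$ you need $\delta$ to die in $\mathbb{E}(A,Y^Y)$, which is the same statement for $Y^Y\in\mathcal{Y}$. Your argument works if $Y^Y$ can be taken injective (that is the usual proof with enough injectives). The first part of the lemma makes no such assumption.

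The missing idea is to shift along an envelope chosen so that $\delta$ itself dies in it. That is, take an envelope of the middle term of a realization of $\delta$, not of $Y$.
\begin{itemize}
\item Let $Y\xrightarrow{f}E\to A\overset{\delta}{\dashrightarrow}$ realize $\delta$. By completeness, take an $\mathbb{E}$-triangle $E\xrightarrow{g}Y^E\to X^E\dashrightarrow$ with $Y^E\in\mathcal{Y}$.
\item Apply (ET4) to the composable inflations $f,g$. This gives an $\mathbb{E}$-triangle $Y\xrightarrow{gf}Y^E\to K\overset{\eta}{\dashrightarrow}$ and a morphism $d\colon A\to K$ with $d^*\eta=\delta$. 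Equivalently, $(gf)_*\delta=g_*f_*\delta=0$, so $\delta$ lies in the image of the connecting map $\mathcal{B}(A,K)\to\mathbb{E}(A,Y)$.
\item By (1), $K\in\mathcal{Y}$, because it is the cone of an inflation between objects of $\mathcal{Y}$.
\item Now your own last step applies. Since $\mathbb{E}(C,K)=0$, we get $d=d'x$ for some $d'\colon B\to K$. Hence $\delta=x^*(d'^*\eta)\in x^*\mathbb{E}(B,Y)=0$.
\end{itemize}

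For (2)$\Rightarrow$(1), dualize. Realize $\theta\in\mathbb{E}(X,Z)$ as $Z\to E\to X$ and take a special $\mathcal{X}$-precover $X_E\to E$. Apply (ET4)$^{\rm op}$ to the composite deflation $X_E\to X$; its cocone lies in $\mathcal{X}$ by (2). Then lift the resulting map into $Z$ through $Y_2\to Z$, using that $\mathbb{E}(\mathcal{X},Y_1)=0$, and conclude $\theta\in\mathbb{E}(X,Y_2)$-image, which is $0$.
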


\subsection{Relative homological dimension} Recall that an {\it$\mathbb{E}$-triangle sequence} is a pair $(\mathbf{X}, Z_\bullet(\mathbf{X}))$ where $\mathbf{X}$ is a sequence $$\mathbf{X}:=\quad\xymatrix@C=2em{\cdots\ar[r]&X_{n+1}\ar[r]^{d_{n+1}}&X_n\ar[r]^{d_n}&X_{n-1}\ar[r]&\cdots}$$ in $\mathcal{B}$, and $Z_\bullet(\mathbf{X})$ is a family of $\mathbb{E}$-triangles   $\{\xymatrix@C=2em{Z_{n+1}(\mathbf{X})\ar[r]^{~~~g_n}&X_n\ar[r]^{f_n~~~}&Z_{n}(\mathbf{X})\ar@{-->}[r]^{\delta_n}&}\}_{n\in\mathbb{Z}}$ satisfying that $d_n=g_{n-1}f_n$. Notice that $\mathbf{X}$ is a chain of complex and $Z_n(\mathbf{X})$ is called the $n$th $\mathbb{E}$-cycle of $\mathbf{X}$ in $\mathcal{B}$. For the sake of simplicity, an $\mathbb{E}$-triangle sequence $(\mathbf{X}, Z_\bullet(\mathbf{X}))$ will be denote by $\mathbf{X}$.

Let $\mathcal{Y}$ be a subcategory of an extriangulated category $(\mathcal{B}, \mathbb{E}, \mathfrak{s})$ and $B\in\mathcal{B}$. A {\it$\mathcal{Y}$-coresolution} of $B$ is an $\mathbb{E}$-triangle sequence of the form
 $$\mathbf{Y}:=\quad \xymatrix@C=2em{0\ar[r]&B\ar[r]&Y_0\ar[r]&Y_{-1}\ar[r]&\cdots\ar[r]&Y_{-k}\ar[r]&\cdots}$$
 where $Y_{-k}\in\mathcal{Y}$ for every $k\geqslant 0$. If $Y_{-k}=0$ for every $k>n$, we shall say the previous sequence is a {\it finite $\mathcal{Y}$-coresolution of $B$ of length $n$}. The {\it $\mathcal{Y}$-coresolution dimension of $B$}, denoted by $\mathcal{Y}$-${\rm coresdim}(B)$, is the smallest $n\geqslant 0$ such that $B$ admits a finite $\mathcal{Y}$-coresolution of length $n$. If such $n$ does not exist, we set $\mathcal{Y}$-${\rm coresdim}(B):=\infty$. We denote by $\mathcal{Y}_n$ the full subcategory of $\mathcal{B}$ consisting of all objects of $\mathcal{Y}$-coresolution dimension at most $n$.

Let $(\mathcal{B}, \mathbb{E}, \mathfrak{s})$ be an extriangulated category. A subcategory $\mathcal{Y}$ of $\mathcal{B}$ is called {\it special preenveloping} if for any object $B\in\mathcal{B}$, there exists an $\mathbb{E}$-triangle $\xymatrix{B\ar[r]&Y\ar[r]&K\ar@{-->}[r]&}$ with $Y\in\mathcal{Y}$ and $K\in{^\perp\mathcal{Y}}$. {\it Special precovering subcetegory} can be defined dually. Note that a cotorsion pair $(^\perp\mathcal{Y}, \mathcal{Y})$ is complete if and only if $^\perp\mathcal{Y}$ is a special precovering and $\mathcal{Y}$ is a special preenveloping.

A $\mathcal{Y}$-coresolution of $B$

$$\mathbf{Y}:=\quad\xymatrix@C=2em{0\ar[r]&B\ar[r]&Y_0\ar[r]&Y_{-1}\ar[r]&\cdots\ar[r]&Y_{-k}\ar[r]&\cdots}$$
is called {\it special proper $\mathcal{Y}$-coresolution of $B$} if $Z_{-k}(\mathbf{Y})\in{^\perp\mathcal{Y}}$ for every $k\geqslant 0$.

\begin{lem}\label{lem2.19} Let $(^\perp\mathcal{Y}, \mathcal{Y})$ be a complete and hereditary cotorsion pair in an extriangulated category $(\mathcal{B}, \mathbb{E}, \mathfrak{s})$. Then the following are equivalent for any object $B\in\mathcal{B}$ and any integer $m\geqslant 1$:

$(1)$ $B\in\mathcal{Y}_m$;

$(2)$ For any special proper $\mathcal{Y}$-coresolution

$$\mathbf{Y}:=\quad\xymatrix@C=2em{0\ar[r]&B\ar[r]&Y_0\ar[r]&Y_{-1}\ar[r]&\cdots\ar[r]&Y_{-k}\ar[r]&\cdots},$$
we have $Z_{-m+1}(\mathbf{Y})\in\mathcal{Y}$;

$(3)$ There exists a special proper $\mathcal{Y}$-coresolution

$$\mathbf{Y}:=\quad\xymatrix@C=2em{0\ar[r]&B\ar[r]&Y_0\ar[r]&Y_{-1}\ar[r]&\cdots\ar[r]&Y_{-m+1}\ar[r]&Y_{-m}\ar[r]&0}.$$

$(4)$ For any $\mathcal{Y}$-coresolution

$$\mathbf{Y'}:=\quad\xymatrix@C=2em{0\ar[r]&B\ar[r]&Y_0'\ar[r]&Y_{-1}'\ar[r]&\cdots\ar[r]&Y_{-k}'\ar[r]&\cdots},$$
we have $Z_{-m+1}(\mathbf{Y'})\in\mathcal{Y}$.
\end{lem}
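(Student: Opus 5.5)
The implications $(4)\Rightarrow(2)$ and $(3)\Rightarrow(1)$ are immediate. A special proper coresolution is in particular a $\mathcal{Y}$-coresolution, and a finite coresolution of length $m$ witnesses $\mathcal{Y}$-${\rm coresdim}(B)\leqslant m$. For $(2)\Rightarrow(3)$, I would first build a special proper $\mathcal{Y}$-coresolution of $B$ by iterating special preenvelopes. Completeness gives $B\to Y_0\to Z_{-1}\dashrightarrow$ with $Y_0\in\mathcal{Y}$ and $Z_{-1}\in{}^\perp\mathcal{Y}$; applying the same to $Z_{-1}$ and continuing produces an $\mathbb{E}$-triangle sequence whose cycles all lie in ${}^\perp\mathcal{Y}$. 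By (2) the cycle $Z_{-m+1}$ lies in $\mathcal{Y}$. Truncating the sequence there, and setting $Y_{-m}:=Z_{-m+1}$ with the last $\mathbb{E}$-triangle $Z_{-m+1}\to Z_{-m+1}\to 0$, gives the finite special proper coresolution required in (3). So everything reduces to $(1)\Rightarrow(4)$.

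For $(1)\Rightarrow(4)$ I would prove, by induction on $m\geqslant 1$, the following key claim: \emph{if $B\to Y'\to C'\dashrightarrow$ is an $\mathbb{E}$-triangle with $Y'\in\mathcal{Y}$ and $B\in\mathcal{Y}_m$, then $C'\in\mathcal{Y}_{m-1}$} (with $\mathcal{Y}_0=\mathcal{Y}$). Applying this along the cycles of an arbitrary coresolution $\mathbf{Y}'$ gives $Z_{-k}(\mathbf{Y}')\in\mathcal{Y}_{m-k}$ for $0\leqslant k\leqslant m-1$, hence $Z_{-m+1}(\mathbf{Y}')\in\mathcal{Y}$.

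To prove the claim, fix a finite coresolution of $B$ of length $m$ and let its first $\mathbb{E}$-triangle be $B\to Y_0\to C\dashrightarrow$, so that $C\in\mathcal{Y}_{m-1}$. Apply the dual of Lemma \ref{lem1} (cobase change along $B\to Y'$) to the two $\mathbb{E}$-triangles starting at $B$. This yields an object $M$ and $\mathbb{E}$-triangles
$$Y'\to M\to C\dashrightarrow \qquad\text{and}\qquad Y_0\to M\to C'\dashrightarrow .$$
\begin{itemize}
\item For $m=1$: here $C\in\mathcal{Y}$, so $M\in\mathcal{Y}$ because $\mathcal{Y}={}^\perp\mathcal{Y}^{\perp}$ is closed under extensions. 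Then $C'\in\mathcal{Y}$, since $\mathcal{Y}$ is closed under cones of inflations by heredity.
\item For $m>1$: I need two auxiliary facts.
 \begin{enumerate}
 \item[(a)] $\mathcal{Y}_{k}$ is closed under extensions. Then the first triangle gives $M\in\mathcal{Y}_{m-1}$.
 \item[(b)] If $Y_0\to M\to C'\dashrightarrow$ is an $\mathbb{E}$-triangle with $Y_0\in\mathcal{Y}$ and $M\in\mathcal{Y}_{k}$, then $C'\in\mathcal{Y}_{k}$. Applied with $k=m-1$ this gives $C'\in\mathcal{Y}_{m-1}$.
 \end{enumerate}
\end{itemize}

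For (a) I would use a horseshoe-type construction. Take special proper coresolutions of the two end terms. Their cycles lie in ${}^\perp\mathcal{Y}$ and their terms in $\mathcal{Y}$, so the vanishing $\mathbb{E}({}^\perp\mathcal{Y},\mathcal{Y})=0$ lets maps extend through the relevant inflations, giving a coresolution of the middle term. Combining this with (b) at lower levels controls its cycles.

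For (b) I would take a special preenvelope $M\to Y_M\to K\dashrightarrow$ with $K\in{}^\perp\mathcal{Y}$. By induction (the claim at level $k$, applied to $M$) we get $K\in\mathcal{Y}_{k-1}$. I then compare with $C'$ via the octahedral axiom (ET4) applied to the composite inflation $Y_0\to M\to Y_M$. This produces $\mathbb{E}$-triangles $C'\to Y''\to K\dashrightarrow$ and $Y_0\to Y_M\to Y''\dashrightarrow$. The second gives $Y''\in\mathcal{Y}$ by heredity. The first then exhibits $C'$ as having a $\mathcal{Y}$-coresolution whose next cycle $K$ lies in $\mathcal{Y}_{k-1}$, so $C'\in\mathcal{Y}_k$.

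The main obstacle is organizing this simultaneous induction so that it is not circular. The claim at level $m$ uses (a) and (b) at level $m-1$, and (b) at level $k$ uses the claim at level $k$ only for an object already known to lie in $\mathcal{Y}_k$. Checking that the ET4/ET4$^{\rm op}$ diagrams produce exactly the stated $\mathbb{E}$-triangles is routine with Lemma \ref{lem1} and its dual.
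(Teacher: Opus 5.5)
\textbf{Verdict: genuine gap.} Your overall architecture can be made to work, but step (a) has no valid proof as written.

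\textbf{What is fine.} The easy implications $(4)\Rightarrow(2)\Rightarrow(3)\Rightarrow(1)$ are correct. So is the case $m=1$ of your claim, and so is step (b). In fact (b) needs no induction: take $M\to Y_M\to K$ to be the first step of a length-$k$ coresolution of $M$.

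\textbf{The gap is (a).} It is the only non-formal input to your step $m>1$, and the horseshoe you sketch for it cannot start. Take a conflation $A\to B\to C\overset{\delta}{\dashrightarrow}$ with $A,C\in\mathcal{Y}_k$, and special preenvelopes $a\colon A\to Y^A_0$ and $c\colon C\to Y^C_0$. A morphism of $\mathbb{E}$-triangles from $\delta$ to the split triangle $Y^A_0\to Y^A_0\oplus Y^C_0\to Y^C_0$ exists only if $a_*\delta=0$ in $\mathbb{E}(C,Y^A_0)$. But $C$ is an arbitrary object of $\mathcal{Y}_k$, not of ${}^\perp\mathcal{Y}$. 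So $\mathbb{E}({}^\perp\mathcal{Y},\mathcal{Y})=0$ says nothing at this first stage; it only helps later, once the end terms are cycles. A non-split variant would need $a_*\delta$ to lie in the image of $c^*\colon\mathbb{E}(Y^C_0,Y^A_0)\to\mathbb{E}(C,Y^A_0)$, which you do not address. You also cannot simply quote closure of $\mathcal{Y}_n$ under extensions: in the paper, Proposition~\ref{cor2.20}(1) is proved after this lemma and uses it.

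\textbf{How the paper avoids (a).} In $(1)\Rightarrow(2)$ it applies the horseshoe-type $3\times 3$ lemma only to two conflations: the special preenvelope $B\to Y_0\to Z_0(\mathbf{Y})$, and the first step $B\to X\to Z$ of a length-$m$ coresolution. There the lifting obstruction lies in $\mathbb{E}(Z_0(\mathbf{Y}),X)=0$. This yields $Z\to L\to Z_{-1}(\mathbf{Y})$ with $L\in\mathcal{Y}$, which is a special proper coresolution of $Z\in\mathcal{Y}_{m-1}$, so induction applies. Arbitrary coresolutions are handled afterwards, in $(3)\Rightarrow(4)$, by comparison.

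\textbf{Two ways to repair your route.}
\begin{enumerate}
\item Prove (a) at level $k$ right after your claim at level $k$, exactly as in the proof of Proposition~\ref{cor2.20}(1). Push the conflation out along the first step $A\to Y\to Z$ of a coresolution of $A$. Take a special preenvelope $D\to Y_1\to X$ of the resulting middle term $D$. Apply (ET4) twice. Use the claim at level $k$ on $C\to H\to X$ to get $X\in\mathcal{Y}_{k-1}$, then (a) at level $k-1$. This makes your simultaneous induction genuinely non-circular.
\item Alternatively, first prove $(1)\Rightarrow(2)$ as the paper does. Then, in your claim, take $B\to Y_0\to C$ to be a special preenvelope. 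The triangle $Y'\to M\to C$ from the dual of Lemma~\ref{lem1} then splits, because $C\in{}^\perp\mathcal{Y}$. So $M\cong Y'\oplus C\in\mathcal{Y}_{m-1}$, and (b) gives $C'\in\mathcal{Y}_{m-1}$, with no need for (a).
\end{enumerate}

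\textbf{Minor indexing point.} With the paper's convention $Z_{n+1}(\mathbf{X})\to X_n\to Z_n(\mathbf{X})$, the first cycle is $Z_0(\mathbf{Y})$, not $Z_{-1}$. Your count should therefore read $Z_{-k}(\mathbf{Y}')\in\mathcal{Y}_{m-1-k}$.
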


\begin{proof} $(1)\Rightarrow (2).$ We proceed by induction.

If $m=1$, then $B\in\mathcal{Y}$ and there exists an $\mathbb{E}$-triangle $\xymatrix@C=2em{0\ar[r]&B\ar[r]&X\ar[r]&Z\ar@{-->}[r]&}$ with $X, Z\in\mathcal{Y}$. Let us show that $Z_0(\mathbf{Y})\in\mathcal{Y}$. It follows from dual version of Lemma \ref{lem1} that there is a commutative diagram with $\mathbb{E}$-triangles
 $$\xymatrix{B\ar[r]\ar[d]&X\ar[r]\ar[d]&Z\ar@{-->}[r]\ar@{=}[d]&\\
Y_0\ar[d]\ar[r]&K\ar[r]\ar[d]&Z\ar@{-->}[r]&\\
Z_0(\mathbf{Y})\ar@{=}[r]\ar@{-->}[d]&Z_0(\mathbf{Y})\ar@{-->}[d]&&\\
&&&}$$
Hence $K\in\mathcal{Y}$ as $Y_0, Z\in\mathcal{Y}$ and $\mathcal{Y}$ is closed under extensions. Since $Z_0(\mathbf{Y})\in{^\perp\mathcal{Y}}$ and $X\in\mathcal{Y}$, the $\mathbb{E}$-triangle $\xymatrix{X\ar[r]&K\ar[r]&Z_0(\mathbf{Y})\ar@{-->}[r]&}$ splits, and $Z_0(\mathbf{Y})\in\mathcal{Y}$.

For $m\geqslant 2$, there exists an $\mathbb{E}$-triangle $\xymatrix@C=2em{B\ar[r]&X\ar[r]&Z\ar@{-->}[r]&}$ with $X\in\mathcal{Y}$ and $Z\in\mathcal{Y}_{m-1}$. Since $Z_0(\mathbf{Y})\in{^\perp\mathcal{Y}}$, we can construct the following commutative diagram with $\mathbb{E}$-triangles

$$\xymatrix{B\ar[r]\ar[d]&Y_0\ar[r]\ar[d]&Z_0(\mathbf{Y})\ar[d]\ar@{-->}[r]&\\X\ar[r]\ar[d]&X\oplus Y_{-1}\ar[r]\ar[d]&Y_{-1}\ar[d]\ar@{-->}[r]&\\Z\ar[r]\ar@{-->}[d]&L\ar[r]\ar@{-->}[d]&Z_{-1}(\mathbf{Y})\ar@{-->}[r]\ar@{-->}[d]&\\
&&&}$$

\noindent by \cite[Lemm 5]{HZZ2}. Hence $L\in\mathcal{Y}$ as $(^\perp\mathcal{Y}, \mathcal{Y})$ is a hereditary cotorsion pair and $Y_0, X, Y_{-1}\in\mathcal{Y}$. Now $Z\in\mathcal{Y}_{m-1}$ and $Z$ has the following special proper $\mathcal{Y}$-coresolution

$$\mathbf{Y'}:=\quad\xymatrix@C=2em{0\ar[r]&Z\ar[r]&Y_0'\ar[r]&Y_{-1}'\ar[r]&\cdots\ar[r]&Y_{-k}'\ar[r]&\cdots}$$

\noindent with $Y_0'=L$ and $Y'_{-k}=Y_{-k-1}$ for any $k\geqslant 1$. By induction $Z_{-m+2}(\mathbf{Y'})=Z_{-m+1}(\mathbf{Y})\in\mathcal{Y}$, as desired.

$(2)\Rightarrow (3).$ By hypothesis and the fact that $\mathcal{Y}$ is a special preenveloping class.

$(3)\Rightarrow (4).$ Assume that there exists a special proper $\mathcal{Y}$-coresolution of $B$

$$\mathbf{Y}:=\quad\xymatrix@C=2em{0\ar[r]&B\ar[r]&Y_0\ar[r]&Y_{-1}\ar[r]&\cdots\ar[r]&Y_{-m+1}\ar[r]&Y_{-m}\ar[r]&0}.$$

For any  $\mathcal{Y}$-coresolution

$$\mathbf{Y'}:=\quad\xymatrix@C=2em{0\ar[r]&B\ar[r]&Y_0'\ar[r]&Y_{-1}'\ar[r]&\cdots\ar[r]&Y_{-k}'\ar[r]&\cdots}.$$

\noindent Since $Z_{0}(\mathbf{Y})\in{^\perp\mathcal{Y}}$, it follows from \cite[Lemm 5]{HZZ2} that there exists a commutative diagram with $\mathbb{E}$-triangles
$$\xymatrix{B\ar[r]\ar[d]&Y_0\ar[r]\ar[d]&Z_0(\mathbf{Y})\ar[d]\ar@{-->}[r]&\\
Y'_0\ar[r]\ar[d]&Y'_0\oplus Y_{-1}\ar[r]\ar[d]&Y_{-1} \ar[d]\ar@{-->}[r]&\\
Z_0(\mathbf{Y'})\ar[r]\ar@{-->}[d]&K_{0}\ar[r]\ar@{-->}[d]&Z_{-1}(\mathbf{Y})\ar@{-->}[r]\ar@{-->}[d]&\\
&&&}$$

\noindent Since $Z_{-k}(\mathbf{Y})\in{^\perp\mathcal{Y}}$ for $k=0, 1, \cdots, m-1$ and $Z_{-m+1}(\mathbf{Y})=Y_{-m}$, we can obtain the following $\mathbb{E}$-triangle sequence

$$\xymatrix{0\ar[r]&Y_0\ar[r]&Y'_0\oplus Y_{-1}\ar[r]&\cdots\ar[r]&Y'_{-m+2}\oplus Y_{-m+1}\ar[r]&K_{-m+2}\ar[r]&0}$$

\noindent and the $\mathbb{E}$-triangle $\xymatrix{Z_{-m+2}(\mathbf{Y'})\ar[r]&K_{-m+2}\ar[r]&Y_{-m}\ar@{-->}[r]&}$ in $\mathcal{B}$
by repeatedly applying \cite[Lemm 5]{HZZ2}. It is easy to see that $K_{-m+2}\in\mathcal{Y}$ as $Y_0, Y_{-1}, \cdots, Y_{-m+1}$, $Y'_0, Y'_{-1}, \cdots, Y'_{-m+2}\in\mathcal{Y}$ and $\mathcal{Y}$ is closed under cones of inflations. It follows from dual version of Lemma \ref{lem1} that there is a commutative diagram with $\mathbb{E}$-triangles
 $$\xymatrix{Z_{-m+2}(\mathbf{Y'})\ar[r]\ar[d]&K_{-m+2}\ar[r]\ar[d]&Y_{-m}\ar@{-->}[r]\ar@{=}[d]&\\
Y'_{-m+1}\ar[d]\ar[r]&L\ar[r]\ar[d]&Y_{-m}\ar@{-->}[r]&\\
Z_{-m+1}(\mathbf{Y'})\ar@{=}[r]\ar@{-->}[d]&Z_{-m+1}(\mathbf{Y'})\ar@{-->}[d]&&\\
&&&}$$
Hence $L\in\mathcal{Y}$ as $Y'_{-m+1}, Y_{-m}\in\mathcal{Y}$ and $\mathcal{Y}$ is closed under extensions. Therefore,  $Z_{-m+1}(\mathbf{Y'})\in\mathcal{Y}$ since $K_{-m+2}, L\in\mathcal{Y}$ and $\mathcal{Y}$ is closed under cones of inflations.

$(4)\Rightarrow (1)$ is obvious.
\end{proof}

\begin{prop}\label{cor2.20} Let $(^\perp\mathcal{Y}, \mathcal{Y})$ be a complete and hereditary cotorsion pair in an extriangulated category $(\mathcal{B}, \mathbb{E}, \mathfrak{s})$, and $n$ a non-negative integer.
Then for any $\mathbb{E}$-triangle $\xymatrix@C=2em{A\ar[r]&B\ar[r]&C\ar@{-->}[r]&}$ in $\mathcal{B}$, the following hold:

$(1)$ If $A, C\in\mathcal{Y}_n$, then $B\in\mathcal{Y}_n$.

$(2)$ If $B\in\mathcal{Y}_n$ and $A\in\mathcal{Y}_{n+1}$, then $C\in\mathcal{Y}_n$.

$(3)$ If $B\in\mathcal{Y}_{n+1}$ and $C\in\mathcal{Y}_{n}$, then $A\in\mathcal{Y}_{n+1}$.

\end{prop}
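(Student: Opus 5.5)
The plan is to move between an object and its cosyzygies using Lemma \ref{lem2.19}, and to combine conflations using the octahedral axiom (ET4), its dual, Lemma \ref{lem1} and its dual. The two basic facts come straight from Lemma \ref{lem2.19}, using that $\mathcal{Y}$ is special preenveloping. First, if $D\in\mathcal{Y}_{k+1}$ and $D\to Y\to K$ is any $\mathbb{E}$-triangle with $Y\in\mathcal{Y}$ (for instance a special $\mathcal{Y}$-preenvelope), then $K\in\mathcal{Y}_k$: splice the triangle with a special proper coresolution of $K$ and apply (4). Second, conversely, any $\mathbb{E}$-triangle $D\to Y\to K$ with $Y\in\mathcal{Y}$ and $K\in\mathcal{Y}_k$ shows $D\in\mathcal{Y}_{k+1}$, by splicing with a length-$k$ coresolution of $K$. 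The base case $k=0$ is simply the statement that $\mathcal{Y}$ is closed under extensions and cones of inflations, which holds because the pair is hereditary.

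\emph{Part (1).} I would argue by induction on $n$; the case $n=0$ is extension-closure of $\mathcal{Y}$. For $n\geqslant1$, choose special preenvelopes $A\to Y_A\to K_A$ and $C\to Y_C\to K_C$ with $K_A,K_C\in{}^\perp\mathcal{Y}$; by the first fact, $K_A,K_C\in\mathcal{Y}_{n-1}$. Since $\mathbb{E}(K_C,Y_A)=0$, the horseshoe-type construction \cite[Lemma 5]{HZZ2} applies, exactly as it was used in the proof of Lemma \ref{lem2.19}. It yields a $3\times3$ diagram containing a row $B\to Y_A\oplus Y_C\to K$ and a conflation $K_A\to K\to K_C$. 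By induction $K\in\mathcal{Y}_{n-1}$, and the second fact then gives $B\in\mathcal{Y}_n$. I expect this step to be the main one: one must check that the hypotheses of the horseshoe lemma, namely the vanishing $\mathbb{E}(K_C,Y_A)=0$ which lets $A\to Y_A$ extend along $A\to B$, are really met.

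\emph{Part (2).} Take a special preenvelope $A\to Y\to K$. By the first fact $K\in\mathcal{Y}_n$. Applying the dual of Lemma \ref{lem1} to this triangle and $A\to B\to C$ gives triangles $Y\to M\to C$ and $B\to M\to K$. Part (1) then gives $M\in\mathcal{Y}_n$.
\begin{itemize}
\item If $n=0$, then $C\in\mathcal{Y}$ because $\mathcal{Y}$ is closed under cones of inflations.
\item If $n\geqslant1$, take $M\to Y'\to K'$ with $Y'\in\mathcal{Y}$ and $K'\in\mathcal{Y}_{n-1}$. The dual of Lemma \ref{lem1} (ET4) applied to $Y\to M\to C$ and $M\to Y'\to K'$ gives triangles $Y\to Y'\to N$ and $C\to N\to K'$. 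Heredity gives $N\in\mathcal{Y}$, and the second fact gives $C\in\mathcal{Y}_n$.
\end{itemize}

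\emph{Part (3).} Take $B\to Y\to K$ with $Y\in\mathcal{Y}$; by the first fact $K\in\mathcal{Y}_n$. The composite $A\to B\to Y$ is an inflation. Applying (ET4) to this composite yields triangles $A\to Y\to N$ and $C\to N\to K$. Since $C,K\in\mathcal{Y}_n$, part (1) gives $N\in\mathcal{Y}_n$, and the second fact gives $A\in\mathcal{Y}_{n+1}$.
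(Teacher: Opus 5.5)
Your two basic facts, and your arguments for (2) and (3), are sound and coincide with the paper's. However, (2) and (3) both invoke (1) at level $n$, and your proof of (1) fails at exactly the step you flagged. Let $\delta$ realize $A\to B\to C$. Whether $a\colon A\to Y_A$ extends along the inflation $A\to B$ is decided by $a_*\delta\in\mathbb{E}(C,Y_A)$, and the vanishing of $\mathbb{E}(K_C,Y_A)$ says nothing about this. In Lemma \ref{lem2.19} the horseshoe construction works because the third term of the triangle being coresolved is $Z_0(\mathbf{Y})\in{}^\perp\mathcal{Y}$. Here the third term is $C$, which is only in $\mathcal{Y}_n$, and $\mathbb{E}(C,Y_A)$ need not vanish. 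For a concrete failure, take the complete hereditary cotorsion pair (projective abelian groups, all abelian groups), so $\mathcal{Y}_1=\mathcal{Y}$, and $n=1$. Use the triangle $0\to\mathbb{Z}\xrightarrow{\,2\,}\mathbb{Z}\to\mathbb{Z}/2\to 0$ with the special preenvelopes $\mathbb{Z}\xrightarrow{\,1\,}\mathbb{Z}\to 0$ and $\mathbb{Z}/2\xrightarrow{\,1\,}\mathbb{Z}/2\to 0$. Then $\mathbb{E}(K_C,Y_A)=0$, but $1_{\mathbb{Z}}$ does not factor through multiplication by $2$. So the map $B\to Y_A$ needed for a middle term $Y_A\oplus Y_C$ does not exist.

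The paper's proof of (1) avoids this extension problem. Take any $A\to Y\to Z$ with $Y\in\mathcal{Y}$ and $Z\in\mathcal{Y}_{n-1}$. Push out along it using the dual of Lemma \ref{lem1} to get triangles $Y\to D\to C$ and $B\to D\to Z$. Choose a special preenvelope $D\to Y_1\to X$ with $X\in{}^\perp\mathcal{Y}$. Applying (ET4) to the composite $Y\to D\to Y_1$ gives $Y\to Y_1\to H$ and $C\to H\to X$. Hence $H\in\mathcal{Y}$ by heredity, and $X\in\mathcal{Y}_{n-1}$ by your first fact. Applying (ET4) to $B\to D\to Y_1$ gives $B\to Y_1\to L$ and $Z\to L\to X$. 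Hence $L\in\mathcal{Y}_{n-1}$ by induction, and $B\in\mathcal{Y}_n$ by your second fact. With (1) proved this way, your (2) and (3) go through unchanged. A minor point: the case $k=0$ of your first fact is Lemma \ref{lem2.19} with $m=1$, not just closure under cones of inflations, since $D$ need not lie in $\mathcal{Y}$.
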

\begin{proof} We proceed by induction.

(1) Since $\mathcal{Y}$ is closed under extensions, the result holds for the case $n=0$.

Let $n\geqslant 1$. If $A, C\in\mathcal{Y}_{n}$, then there is an $\mathbb{E}$-triangle $\xymatrix{A\ar[r]&Y\ar[r]&Z\ar@{-->}[r]&}$ with $Y\in\mathcal{Y}$ and $Z\in\mathcal{Y}_{n-1}$. Hence there is a commutative diagram with $\mathbb{E}$-triangles
 $$\xymatrix{A\ar[r]\ar[d]&B\ar[r]\ar[d]&C\ar@{-->}[r]\ar@{=}[d]&\\
Y\ar[d]\ar[r]&D\ar[r]\ar[d]&C\ar@{-->}[r]&\\
Z\ar@{=}[r]\ar@{-->}[d]&Z\ar@{-->}[d]&&\\
&&&}$$
by the dual version of Lemma \ref{lem1}. Since cotorsion pair $(^\perp\mathcal{Y}, \mathcal{Y})$ is complete, there is an $\mathbb{E}$-triangle
$\xymatrix{D\ar[r]&Y_1\ar[r]&X\ar@{-->}[r]&}$ with $Y_1\in\mathcal{Y}$ and $X\in{^\perp\mathcal{Y}}$. It follows from (ET4) that there is a commutative diagram with $\mathbb{E}$-triangles
 $$\xymatrix{Y\ar[r]\ar@{=}[d]&D\ar[r]\ar[d]&C\ar@{-->}[r]\ar[d]&\\
Y\ar[r]&Y_1\ar[r]\ar[d]&H\ar@{-->}[r]\ar[d]&\\
&X\ar@{=}[r]\ar@{-->}[d]&X\ar@{-->}[d]&\\
&&&}$$
Hence $H\in\mathcal{Y}$ as $Y, Y_1\in\mathcal{Y}$ and $\mathcal{Y}$ is closed under cones of inflations, which implies that $X\in\mathcal{Y}_{n-1}$ by Lemma \ref{lem2.19} because
$C\in\mathcal{Y}_n$ and $H\in\mathcal{Y}$. It follows from (ET4) that there is a commutative diagram with $\mathbb{E}$-triangles
 $$\xymatrix{B\ar[r]\ar@{=}[d]&D\ar[r]\ar[d]&Z\ar@{-->}[r]\ar[d]&\\
B\ar[r]&Y_1\ar[r]\ar[d]&L\ar@{-->}[r]\ar[d]&\\
&X\ar@{=}[r]\ar@{-->}[d]&X\ar@{-->}[d]&\\
&&&}$$
 Hence $L\in\mathcal{Y}_{n-1}$ as $Z, X\in\mathcal{Y}_{n-1}$ and $\mathcal{Y}_{n-1}$ is closed under extensions by induction, which implies that $B\in\mathcal{Y}_{n}$ since $Y_1\in\mathcal{Y}$ and $L\in\mathcal{Y}_{n-1}$.

(2) It follows from Lemma \ref{lem2.19} that the result holds for the case $n=0$.

Let $n\geqslant 1$. If $B\in\mathcal{Y}_n$ and $A\in\mathcal{Y}_{n+1}$, then there is an $\mathbb{E}$-triangle $\xymatrix@C=0.8cm{A\ar[r]&Y\ar[r]&Z\ar@{-->}[r]&}$ with $Y\in\mathcal{Y}$ and $Z\in\mathcal{Y}_{n}$. Hence there is a commutative diagram with $\mathbb{E}$-triangles
 $$\xymatrix{A\ar[r]\ar[d]&B\ar[r]\ar[d]&C\ar@{-->}[r]\ar@{=}[d]&\\
Y\ar[d]\ar[r]&D\ar[r]\ar[d]&C\ar@{-->}[r]&\\
Z\ar@{=}[r]\ar@{-->}[d]&Z\ar@{-->}[d]&&\\
&&&}$$
by the dual version of Lemma \ref{lem1}. Then $D\in\mathcal{Y}_n$ as $B, Z\in\mathcal{Y}_n$ and $\mathcal{Y}_n$ is closed under extensions by (1). So there is an $\mathbb{E}$-triangle $\xymatrix{D\ar[r]&Y_1\ar[r]&X\ar@{-->}[r]&}$ with $Y_1\in\mathcal{Y}$ and $X\in\mathcal{Y}_{n-1}$. It follows from (ET4) that there is a commutative diagram with $\mathbb{E}$-triangles

$$\xymatrix{Y\ar[r]\ar@{=}[d]&D\ar[r]\ar[d]&C\ar@{-->}[r]\ar[d]&\\
Y\ar[r]&Y_1\ar[r]\ar[d]&L\ar@{-->}[r]\ar[d]&\\
&X\ar@{=}[r]\ar@{-->}[d]&X\ar@{-->}[d]&\\
&&&}$$
Hence $L\in\mathcal{Y}$ as $Y, Y_1\in\mathcal{Y}$ and $\mathcal{Y}$ is closed under cones of inflations, which implies that $C\in\mathcal{Y}_n$ since $L\in\mathcal{Y}$ and $X\in\mathcal{Y}_{n-1}$.

(3) Let $n=0$. If $B\in\mathcal{Y}_1$ and $C\in\mathcal{Y}$, then there is an $\mathbb{E}$-triangle $\xymatrix{B\ar[r]&K\ar[r]&L\ar@{-->}[r]&}$ with $K, L\in\mathcal{Y}$. It follows from (ET4) that there is a commutative diagram with $\mathbb{E}$-triangles
  $$\xymatrix{A\ar[r]\ar@{=}[d]&B\ar[r]\ar[d]&C\ar@{-->}[r]\ar[d]&\\
A\ar[r]&K\ar[r]\ar[d]&Y\ar@{-->}[r]\ar[d]&\\
&L\ar@{=}[r]\ar@{-->}[d]&L\ar@{-->}[d]&\\
&&&}$$
Hence $Y\in\mathcal{Y}$ as $C, L\in\mathcal{Y}$ and $\mathcal{Y}$ is closed under extensions, which implies that $A\in\mathcal{Y}_1$.

Let $n\geqslant 1$. If $B\in\mathcal{Y}_{n+1}$ and $C\in\mathcal{Y}_{n}$, then there is an $\mathbb{E}$-triangle $\xymatrix{B\ar[r]&Y\ar[r]&Z\ar@{-->}[r]&}$ with $Y\in\mathcal{Y}$ and $Z\in\mathcal{Y}_{n}$. It follows from (ET4) that there is a commutative diagram with $\mathbb{E}$-triangles

$$\xymatrix{A\ar[r]\ar@{=}[d]&B\ar[r]\ar[d]&C\ar@{-->}[r]\ar[d]&\\
A\ar[r]&Y\ar[r]\ar[d]&X\ar@{-->}[r]\ar[d]&\\
&Z\ar@{=}[r]\ar@{-->}[d]&Z\ar@{-->}[d]&\\
&&&}$$
Since $C, Z\in\mathcal{Y}_n$ and $\mathcal{Y}_n$ is closed under extensions by (1), we have $X\in\mathcal{Y}_n$, which implies that $A\in\mathcal{Y}_{n+1}$ as $Y\in\mathcal{Y}$ and $X\in\mathcal{Y}_n$.
\end{proof}

\subsection{Model structures} We refer to \cite{Q1,HC,G} for some basic concepts and results on
model structures.

Recall from \cite[Definition 5.5]{NP} that an \emph{admissible model structure} on a WIC extriangulated category $\mathcal{B}$ is a model structure in the sense of \cite[Definition 1.1.3]{HC} in which each of the following holds.

(1) A map is a (trivial) cofibration if and only if it is an inflation with a (trivially) cofibrant cone.

(2) A map is a (trivial) fibration if and only if it is a deflation with a (trivially) fibrant cocone.

The notion of an abelian model structure introduced in \cite{HCc} and that of an exact model structure introduced in \cite{Gillespie} are both special cases of the concept of an admissible model structure.

A triple $(\mathcal{C}, \mathcal{W}, \mathcal{F})$
of $\mathcal{B}$ is called a {\it Hovey triple} if $(\mathcal{C}\cap \mathcal{W}, \mathcal{F})$ and $(\mathcal{C}, \mathcal{W}\cap \mathcal{F})$ are complete cotorsion pairs, and $\mathcal{W}$ is {\it thick} in $\mathcal{B}$ (i.e., $\mathcal{W}$ is closed under direct summands, and if two out of three objects in a conflation are in $\mathcal{W}$, then so is the third one). A Hover triple is called {\it hereditary} if these two complete cotorsion pairs $(\mathcal{C}\cap \mathcal{W}, \mathcal{F})$ and $(\mathcal{C}, \mathcal{W}\cap \mathcal{F})$ are hereditary.

It was shown by Hovey \cite{HCc} that there exists a translation between abelian model structures and Hovey triples within an abelian category. This result was subsequently generalized by Gillespie \cite{Gillespie} to the setting of WIC exact categories, and later by Nakaoka and Palu \cite{NP} to WIC extriangulated categories. More precisely, given a WIC extriangulated category $\mathcal{B}$, one obtains a one-to-one correspondence between admissible model structures and Hovey twin cotorsion pairs. Hence, we may safely regard Hovey triples and admissible model structures as synonymous whenever no confusion is likely to arise. Further details can be found in \cite[Section 5]{NP}, \cite[Section 3.2]{Palu}, and \cite[Section 2.5]{LGZ}.

Recall from \cite{NP} that $\mathcal{B}$ is said to be \emph{Frobenius} if $\mathcal{B}$ has enough projectives, enough injectives, and the projectives coincide with the injectives.
 In this case, one has the quotient category $\underline{\mathcal{B}}$ of $\mathcal{B}$ by projectives, which is a triangulated category by \cite[Corollary 7.4 and Remark 7.5]{NP}. We refer to this category as the \emph{stable category} of $\underline{\mathcal{B}}$. Denote by  ${\rm Ho}(\mathcal{M})$ the homotopy category associated with a Hovey triple $\mathcal{M}=(\mathcal{C}, \mathcal{W}, \mathcal{F})$ in $\mathcal{B}$. Thus we have the following theorem.

\begin{thm}\label{thm0} {\rm(\cite[Sections 5-7]{NP})} Let $\mathcal{M}=(\mathcal{C}, \mathcal{W}, \mathcal{F})$ be a hereditary Hovey triple in $\mathcal{B}$. Then $\mathcal{C}\cap\mathcal{F}$ is a Frobenius extriangulated category whose class of projective-injective objects is $\mathcal{C}\cap\mathcal{W}\cap\mathcal{F}$, and there is a triangle equivalence $\underline{\mathcal{C}\cap\mathcal{F}}\cong {\rm Ho}(\mathcal{M})$.
\end{thm}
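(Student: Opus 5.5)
The plan is to split the statement into two parts. The first is an intrinsic verification that $\mathcal{C}\cap\mathcal{F}$ is Frobenius with $\omega:=\mathcal{C}\cap\mathcal{W}\cap\mathcal{F}$ as projective-injectives. The second is the identification of ${\rm Ho}(\mathcal{M})$ with the stable category, which I would do through the Nakaoka–Palu description of the homotopy category of an admissible model structure.

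\emph{Frobenius part.} I will proceed in four steps.

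\begin{enumerate}
\item Since $\mathcal{C}$ and $\mathcal{F}$ are left/right halves of cotorsion pairs, both are closed under extensions. Hence $\mathcal{C}\cap\mathcal{F}$ is extension closed and inherits an extriangulated structure by \cite[Remark 2.18]{NP}.

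\item Every $W\in\omega$ is injective and projective in $\mathcal{C}\cap\mathcal{F}$.
\begin{itemize}
\item Injectivity: for $X\in\mathcal{C}$ we have $\mathbb{E}(X,W)=0$, because $W\in\mathcal{W}\cap\mathcal{F}=\mathcal{C}^\perp$.
\item Projectivity: for $Y\in\mathcal{F}$ we have $\mathbb{E}(W,Y)=0$, because $W\in\mathcal{C}\cap\mathcal{W}={}^\perp\mathcal{F}$.
\end{itemize}

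\item There are enough injectives. Given $X\in\mathcal{C}\cap\mathcal{F}$, completeness of $(\mathcal{C},\mathcal{W}\cap\mathcal{F})$ gives an $\mathbb{E}$-triangle $X\to Y\to C'\dashrightarrow$ with $Y\in\mathcal{W}\cap\mathcal{F}$ and $C'\in\mathcal{C}$.
\begin{itemize}
\item $Y\in\mathcal{C}$, since $\mathcal{C}$ is extension closed.
\item $C'\in\mathcal{F}$, since $(\mathcal{C}\cap\mathcal{W},\mathcal{F})$ is hereditary and so $\mathcal{F}$ is closed under cones of inflations.
\end{itemize}
Thus $Y\in\omega$ and the triangle lies in $\mathcal{C}\cap\mathcal{F}$. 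Enough projectives follows dually: use completeness of $(\mathcal{C}\cap\mathcal{W},\mathcal{F})$, and use that $\mathcal{C}$ is closed under cocones of deflations by heredity of $(\mathcal{C},\mathcal{W}\cap\mathcal{F})$.

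\item Every projective $P$ of $\mathcal{C}\cap\mathcal{F}$ lies in $\omega$. The conflation $K\to Q\to P$ with $Q\in\omega$ from step 3 splits, so $P$ is a direct summand of $Q$, and $\omega$ is closed under summands. Dually every injective lies in $\omega$. Hence the projectives and injectives of $\mathcal{C}\cap\mathcal{F}$ both equal $\omega$.
\end{enumerate}

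\emph{Homotopy category part.} By the Hovey correspondence for WIC extriangulated categories \cite[Section 5]{NP}, the triple $\mathcal{M}$ determines an admissible model structure. In it the cofibrant objects are $\mathcal{C}$, the fibrant objects are $\mathcal{F}$, and the trivial objects are $\mathcal{W}$. I would proceed as follows.
\begin{itemize}
\item Apply the fundamental theorem of model categories: the inclusion $\mathcal{C}\cap\mathcal{F}\to\mathcal{B}\to{\rm Ho}(\mathcal{M})$ identifies ${\rm Ho}(\mathcal{M})$ with $(\mathcal{C}\cap\mathcal{F})/\!\sim$, where $\sim$ is the homotopy relation. This functor is essentially surjective via cofibrant–fibrant replacement, which the two completeness conditions provide.
\item Show that for $X,Y\in\mathcal{C}\cap\mathcal{F}$ two maps are homotopic if and only if their difference factors through $\omega$.
\begin{itemize}
\item A cylinder for $X$ is obtained from the conflation $K\to Q\to X$ with $Q\in\omega$ and $K\in\mathcal{C}\cap\mathcal{F}$, giving $X\oplus Q$ as a good cylinder object.
\item A left homotopy then amounts to a map from $Q$ correcting the difference, so homotopic maps differ by a map factoring through $Q\in\omega$.
\item Conversely, any map factoring through $\omega$ factors through $Q$, by injectivity of $\omega$ in $\mathcal{C}\cap\mathcal{F}$ together with the deflation $Q\to X$.
\end{itemize}
\item Check that the equivalence is triangulated. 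The suspension on both sides is the cone of an inflation $X\to I$ with $I\in\omega$. Standard triangles coming from conflations in $\mathcal{C}\cap\mathcal{F}$ map to distinguished triangles of the homotopy category.
\end{itemize}

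I expect the main obstacle to be this last comparison. One must check that the triangulated structure on ${\rm Ho}(\mathcal{M})$ from \cite[Sections 6--7]{NP} agrees with the stable-category structure of \cite[Corollary 7.4]{NP}. My first attempt would be to show that both are determined by the same class of conflations in $\mathcal{C}\cap\mathcal{F}$ with the same connecting morphisms. I would check this first on the triangles $X\to I\to\Sigma X$, and then extend it by the octahedral-type axiom (ET4).
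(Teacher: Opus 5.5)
Your route is more hands-on than the paper's. The paper proves this statement purely by citation. It asserts without argument that $\mathcal{C}\cap\mathcal{F}$ is Frobenius with projective-injectives $\mathcal{C}\cap\mathcal{W}\cap\mathcal{F}$. It takes the triangulations of $\underline{\mathcal{C}\cap\mathcal{F}}$ and ${\rm Ho}(\mathcal{M})$ from \cite[Corollary 7.4]{NP} and \cite[Theorem 6.20]{NP}, and the equivalence from \cite[Corollary 5.25]{NP}. It then says that a ``direct verification'' shows this is a triangle equivalence.

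Your four-step Frobenius argument is correct and supplies what the paper leaves implicit. Both halves of heredity are used in the right places, and step 4 works because $\omega$ is closed under summands. Computing the homotopy relation yourself instead of citing \cite[Corollary 5.25]{NP} is also a sound plan, but the construction you wrote down is wrong.

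The conflation $K\to Q\xrightarrow{p}X$ does not give a cylinder for $X$. A cofibration $X\oplus X\to X\oplus Q$ over the fold map $(1\ \ 1)$ would require an inflation $X\to Q$ with cofibrant cone, and $p$ goes the other way. What $K\to Q\to X$ actually produces is a path object $X\to X\oplus Q\to X\oplus X$, which is useful on the \emph{target} side. For the same reason your converse does not typecheck: injectivity extends maps along inflations, and a map out of $X$ cannot be factored through a deflation onto $X$.

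The fix is to use the conflation $X\xrightarrow{\iota}I\to C'$ from your step 3, with $I\in\omega$ and $C'\in\mathcal{C}\cap\mathcal{F}$. Put $i=\left(\begin{smallmatrix}1&1\\0&\iota\end{smallmatrix}\right)\colon X\oplus X\to X\oplus I$ and $q=(1\ \ 0)\colon X\oplus I\to X$. Then $i$ is $1\oplus\iota$ composed with an automorphism of $X\oplus X$, hence an inflation with cone $C'\in\mathcal{C}$. The map $q$ is a split deflation with cocone $I\in\mathcal{W}\cap\mathcal{F}$, and $qi$ is the fold map. A left homotopy from $f$ to $g$ is then of the form $(f\ \ s)$ with $g-f=s\iota$. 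Conversely, suppose $g-f=\beta\alpha$ with $\alpha\colon X\to W$ and $W\in\omega$. Then $\alpha=\alpha'\iota$ because $\mathbb{E}(C',W)=0$, and $(f\ \ \beta\alpha')$ is a homotopy. Equivalently, you can keep your conflation but apply it to the target as a path object and use projectivity of $\omega$ against $\mathcal{F}$.

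Note also that the classical fundamental theorem presupposes finite limits and colimits, which $\mathcal{B}$ need not have. Your description of $\sim$ as the ideal of maps factoring through $\omega$ makes transitivity automatic. For the identification ${\rm Ho}(\mathcal{M})\simeq(\mathcal{C}\cap\mathcal{F})/\!\sim$ you should lean on \cite[Section 5]{NP}, as the paper does.

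On the triangle equivalence, the paper says no more than you do, so you are not missing an idea it supplies. Your plan is, however, heavier than necessary. You do not need to match the two classes of triangles or extend anything via (ET4). It suffices that the equivalence induced by the inclusion commutes with $\Sigma$ and sends the standard triangle of every $\mathbb{E}$-triangle in $\mathcal{C}\cap\mathcal{F}$ to a distinguished triangle of ${\rm Ho}(\mathcal{M})$. Both follow from your planned check on $X\to I\to\Sigma X$ with $I\in\omega$: this conflation has trivially fibrant middle term and cofibrant cone, so it is a legitimate choice for computing the shift and the connecting morphisms on either side. An exact functor that is an equivalence of categories is automatically a triangle equivalence.
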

\begin{proof} Since $\mathcal{M}=(\mathcal{C}, \mathcal{W}, \mathcal{F})$ is a hereditary Hovey triple in $\mathcal{B}$, $\mathcal{C}\cap\mathcal{F}$ is a Frobenius extriangulated category whose class of projective-injective objects is $\mathcal{C}\cap\mathcal{W}\cap\mathcal{F}$, and therefore $\underline{\mathcal{C}\cap\mathcal{F}}$ is a triangulated category by \cite[Corollary 7.4]{NP}. Note that $\mathcal{M}$ is Hovey triple in $\mathcal{B}$ if and only if  $((\mathcal{C}\cap\mathcal{W},\mathcal{F}),(\mathcal{C}, \mathcal{W}\cap\mathcal{F}))$ forms a Hovey twin cotorsion pair as defined in \cite[Definition 5.1]{NP}. It follows from \cite[Theorem 6.20]{NP} that ${\rm Ho}(\mathcal{M})$ is also a triangulated category. Consequently, a direct verification shows that the equivalence $\underline{\mathcal{C}\cap\mathcal{F}}\cong {\rm Ho}(\mathcal{M})$ established in \cite[Corollary 5.25]{NP} is a triangle equivalence.
\end{proof}

\subsection{$\xi$-$\mathcal{G}$injective objects} Let us recall some definitions and basic properties of $\xi$-$\mathcal{G}$injective objects from \cite{HZZ}.

\begin{definition} \emph{(\cite[Definition 4.4]{HZZ})}
{\rm A {\it $\xi$-exact} complex $\mathbf{X}$ is an $\mathbb{E}$-triangle sequence $(\mathbf{X}, Z_\bullet(\mathbf{X}))$
 $$\mathbf{X}:=\quad\xymatrix@C=2em{\cdots\ar[r]&X_{n+1}\ar[r]^{d_{n+1}}&X_n\ar[r]^{d_n}&X_{n-1}\ar[r]&\cdots}$$
 in $\mathcal{B}$ such that for each integer $n$, the $\mathbb{E}$-triangle $\xymatrix@C=2em{Z_{n+1}(\mathbf{X})\ar[r]^{g_n}&X_n\ar[r]^{f_n}&Z_n(\mathbf{Y})\ar@{-->}[r]^{\delta_n}&}$ is in $\xi$.
}\end{definition}

\begin{definition} \emph{(\cite[Definition 4.5]{HZZ})}
{\rm Let $\mathcal{W}$ be a class of objects in $\mathcal{B}$. An $\mathbb{E}$-triangle
$$\xymatrix@C=2em{A\ar[r]& B\ar[r]& C\ar@{-->}[r]& }$$ in $\xi$ is called to be
{\it $\mathcal{B}(\mathcal{W}, -)$-exact} if for any $W\in\mathcal{W}$, the induced sequence of abelian groups $\xymatrix@C=2em{0\ar[r]&\mathcal{B}(W, A)\ar[r]&\mathcal{B}(W, B)\ar[r]&\mathcal{B}(W, C)\ar[r]& 0}$ is exact in ${\rm Ab}$}.
\end{definition}

\begin{definition} \emph{(\cite[Definition 4.6]{HZZ})}
 {\rm Let $\mathcal{W}$ be a class of objects in $\mathcal{B}$. A complex $\mathbf{X}$ is called {\it $\mathcal{B}(\mathcal{W}, -)$-exact}  if it is a $\xi$-exact complex
$$\xymatrix@C=2em{\cdots\ar[r]&X_1\ar[r]^{d_1}&X_0\ar[r]^{d_0}&X_{-1}\ar[r]&\cdots}$$ in $\mathcal{C}$ such that  there is a $\mathcal{B}(\mathcal{W},-)$-exact $\mathbb{E}$-triangle $$\xymatrix@C=2em{Z_{n+1}(\mathbf{X})\ar[r]^{g_n}&X_n\ar[r]^{f_n}&Z_n(\mathbf{X})\ar@{-->}[r]^{\delta_n}&}$$ in $\xi$ for each integer $n$.

 A $\xi$-exact complex $\mathbf{X}$ is called {\it complete $\mathcal{I}(\xi)$-exact} if it is $\mathcal{B}(\mathcal{I}(\xi), -)$-exact.}
\end{definition}

\begin{definition} \emph{(\cite[Definition 4.7]{HZZ})}
 {\rm A  {\it complete $\xi$-injective coresolution}  is a complete $\mathcal{I}(\xi)$-exact complex $$\xymatrix@C=2em{\mathbf{I}:\cdots\ar[r]&I_1\ar[r]^{d_1}&I_0\ar[r]^{d_0}&I_{-1}\ar[r]&\cdots}$$ in $\mathcal{B}$ such that $I_n$ is $\xi$-injective for each integer $n$.}
\end{definition}

\begin{definition} \emph{(\cite[Definition 4.8]{HZZ})}
{\rm  Let $\mathbf{I}$ be a complete $\xi$-injective coresolution in $\mathcal{B}$. So for each integer $n$, there exists a $\mathcal{B}(\mathcal{I}(\xi), -)$-exact $\mathbb{E}$-triangle $\xymatrix@C=2em{Z_{n+1}(\mathbf{I})\ar[r]^{g_n}&I_n\ar[r]^{f_n}&Z_n(\mathbf{I})\ar@{-->}[r]^{\delta_n}&}$ in $\xi$. The objects $Z_n(\mathbf{I})$ are called {\it $\xi$-$\mathcal{G}$injective} for each integer $n$. We denote by $\mathcal{GI}(\xi)$ the class of $\xi$-$\mathcal{G}$injective  objects.
It is obvious that $\mathcal{I(\xi)}$ $\subseteq$ $\mathcal{GI}(\xi)$.}
\end{definition}


\section{\bf Proof of Theorem \ref{thm2}}\label{section3}
This section is devoted to a proof of the statements of Theorem \ref{thm2}. We start with the following observation.

\begin{lem}\label{lemmma:3.1} Assume that $(\mathcal{X}, \mathcal{Y})$ is a complete and hereditary cotorsion pair in $(\mathcal{B}, \mathbb{E}, \mathfrak{s})$. If  $(^\perp\mathcal{Y}_n, \mathcal{Y}_n)$ is a complete cotorsion pair for some non-negative integer $n$, then $(^\perp\mathcal{Y}_n, \mathcal{Y}_n)$ is hereditary.
\end{lem}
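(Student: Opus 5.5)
By Lemma \ref{lem2.18}, since $({}^\perp\mathcal{Y}_n,\mathcal{Y}_n)$ is assumed complete, it is enough to check condition (1) there: $\mathcal{Y}_n$ is closed under cones of inflations. This requires no enough projectives or injectives, which matters because $\mathcal{B}$ is only a WIC extriangulated category. The other hypothesis is that $(\mathcal{X},\mathcal{Y})$ is complete and hereditary; as a cotorsion pair it satisfies $\mathcal{X}={}^\perp\mathcal{Y}$, so we are in the setting of Lemma \ref{lem2.19} and Proposition \ref{cor2.20}.

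So let $\xymatrix@C=2em{A\ar[r]&B\ar[r]&C\ar@{-->}[r]&}$ be an $\mathbb{E}$-triangle with $A,B\in\mathcal{Y}_n$; we must show $C\in\mathcal{Y}_n$. The key observation is that $\mathcal{Y}_n\subseteq\mathcal{Y}_{n+1}$, because a coresolution of length at most $n$ is in particular one of length at most $n+1$ (pad with zeros). Hence $A\in\mathcal{Y}_{n+1}$ and $B\in\mathcal{Y}_n$. Proposition \ref{cor2.20}(2) then gives $C\in\mathcal{Y}_n$ directly. The case $n=0$ is included: there Proposition \ref{cor2.20}(2) reduces to Lemma \ref{lem2.19}, and in any case it is simply the statement that $\mathcal{Y}$ is closed under cones of inflations, which holds by heredity of $(\mathcal{X},\mathcal{Y})$.

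Therefore $\mathcal{Y}_n$ is closed under cones of inflations, and Lemma \ref{lem2.18} upgrades the complete cotorsion pair $({}^\perp\mathcal{Y}_n,\mathcal{Y}_n)$ to a hereditary one. If one wanted to avoid invoking the equivalence in Lemma \ref{lem2.18}, the dual closure of ${}^\perp\mathcal{Y}_n$ under cocones of deflations could be obtained by the standard argument: use completeness and Lemma \ref{lem1} to turn a would-be nonzero extension into a contradiction. This is unnecessary here.

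The only point needing care is the bookkeeping of indices in Proposition \ref{cor2.20}(2): the proposition deliberately weakens the hypothesis on $A$ to $\mathcal{Y}_{n+1}$, and the inclusion $\mathcal{Y}_n\subseteq\mathcal{Y}_{n+1}$ is what lets it apply. I do not expect any real obstacle beyond confirming that Lemma \ref{lem2.18}'s equivalence of (1)–(3) holds in general extriangulated categories, which is how it is stated.
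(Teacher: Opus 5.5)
Your proposal is correct and is the same argument as the paper's, which just cites Lemma \ref{lem2.18} and Proposition \ref{cor2.20}. You have made explicit the step the paper leaves implicit: apply Proposition \ref{cor2.20}(2) using the trivial inclusion $\mathcal{Y}_n\subseteq\mathcal{Y}_{n+1}$ to get closure of $\mathcal{Y}_n$ under cones of inflations, and then use the equivalence (1)$\Leftrightarrow$(3) of Lemma \ref{lem2.18}, which needs no enough projectives or injectives.
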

\begin{proof}
  The proof is straightforward by Lemma \ref{lem2.18} and Proposition \ref{cor2.20}.
\end{proof}


%
Now we give a description of $\mathcal{F}_n$ for a hereditary Hovey triple $(\mathcal{C}, \mathcal{W}, \mathcal{F})$.

\begin{thm}\label{thm1} Let $(\mathcal{C}, \mathcal{W}, \mathcal{F})$ be a hereditary Hovey triple in $(\mathcal{B}, \mathbb{E}, \mathfrak{s})$. Consider the following conditions for  any $M\in\mathcal{B}$ and  any  non-negative integer $n$:

$(1)$ $M\in\mathcal{F}_n$;

$(2)$ There is an $\mathbb{E}$-triangle $\xymatrix@=2em{N\ar[r]&L\ar[r]&M\ar@{-->}[r]&}$ with $N\in\mathcal{F}$ and $L\in(\mathcal{W}\cap \mathcal{F})_{n}$;

$(3)$ There is an $\mathbb{E}$-triangle $\xymatrix@=2em{M\ar[r]&F\ar[r]&K\ar@{-->}[r]&}$ with $F\in\mathcal{F}$ and $K\in(\mathcal{W}\cap \mathcal{F})_{n-1}$;

$(4)$ $\mathbb{E}(H, M)=0$ for every $H\in{^\perp(\mathcal{W}\cap \mathcal{F})}_n\cap \mathcal{W}$.

Then $(1)\Leftrightarrow (2)\Leftrightarrow (3)\Rightarrow (4)$. The converse holds if $(^\perp(\mathcal{W}\cap\mathcal{F})_n, (\mathcal{W}\cap\mathcal{F})_n)$ is a complete cotorsion pair.
\end{thm}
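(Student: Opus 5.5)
We prove the implications $(1)\Rightarrow(2)\Rightarrow(1)$ and $(1)\Rightarrow(3)\Rightarrow(1)$, then $(2)\Rightarrow(4)$, and finally $(4)\Rightarrow(1)$ under the completeness hypothesis. Proposition \ref{cor2.20} and Lemma \ref{lem2.19} are applied throughout to the complete hereditary cotorsion pair $(\mathcal{C}\cap\mathcal{W},\mathcal{F})$, so $\mathcal{Y}=\mathcal{F}$ there. We use the convention $(\mathcal{W}\cap\mathcal{F})_{-1}=\{0\}$.

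\emph{Preliminary claim.} We first show $\mathcal{F}_k\cap\mathcal{W}=(\mathcal{W}\cap\mathcal{F})_k$ for all $k\geqslant 0$. The inclusion $\supseteq$ holds because $\mathcal{W}$ is thick. For $\subseteq$, take $K\in\mathcal{F}_k\cap\mathcal{W}$ and build a special proper $\mathcal{F}$-coresolution of $K$, using at each step an $\mathbb{E}$-triangle $Z\to F_i\to Z'$ with $F_i\in\mathcal{F}$ and $Z'\in\mathcal{C}\cap\mathcal{W}$. Since $Z,Z'\in\mathcal{W}$, thickness gives $F_i\in\mathcal{W}\cap\mathcal{F}$ and keeps every cycle in $\mathcal{W}$. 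By Lemma \ref{lem2.19}, the $(-k+1)$-th cycle lies in $\mathcal{F}\cap\mathcal{W}$, so the coresolution stops at length $k$ inside $\mathcal{W}\cap\mathcal{F}$.

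\emph{$(1)\Leftrightarrow(2)$.} Given $M\in\mathcal{F}_n$, completeness of $(\mathcal{C}\cap\mathcal{W},\mathcal{F})$ gives $N\to L\to M$ with $N\in\mathcal{F}$ and $L\in\mathcal{C}\cap\mathcal{W}$. Proposition \ref{cor2.20}(1) gives $L\in\mathcal{F}_n$, so $L\in(\mathcal{W}\cap\mathcal{F})_n$ by the claim. Conversely, given (2), apply Proposition \ref{cor2.20}(2) with $N\in\mathcal{F}\subseteq\mathcal{F}_{n+1}$ and $L\in\mathcal{F}_n$ to get $M\in\mathcal{F}_n$.

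\emph{$(1)\Leftrightarrow(3)$.} The case $n=0$ is trivial, so let $n\geqslant 1$. Given $M\in\mathcal{F}_n$, take $M\to F\to K$ with $F\in\mathcal{F}$ and $K\in\mathcal{C}\cap\mathcal{W}$. Proposition \ref{cor2.20}(2), applied at level $n-1$, gives $K\in\mathcal{F}_{n-1}$, so $K\in(\mathcal{W}\cap\mathcal{F})_{n-1}$ by the claim. Conversely, given (3), we have $K\in\mathcal{F}_{n-1}$ and $F\in\mathcal{F}\subseteq\mathcal{F}_n$, so Proposition \ref{cor2.20}(3) gives $M\in\mathcal{F}_n$.

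\emph{$(2)\Rightarrow(4)$.} Let $H\in{^\perp(\mathcal{W}\cap\mathcal{F})_n}\cap\mathcal{W}$ and let $M\to X\to H$ realize some $\varepsilon\in\mathbb{E}(H,M)$. Combine it with the triangle $N\to L\to M$ from (2) using (ET4). This produces $\mathbb{E}$-triangles $L\to Y\to H$ and $N\to Y\to X$, with the composite $Y\to X\to H$ equal to the deflation $Y\to H$. Since $\mathbb{E}(H,L)=0$, the triangle $L\to Y\to H$ splits; let $s\colon H\to Y$ be a section. Then $H\xrightarrow{s}Y\to X$ is a section of $X\to H$, so $\varepsilon=0$. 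This step does not need a long exact sequence in higher $\mathbb{E}$, which is the point of routing through (2).

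\emph{$(4)\Rightarrow(1)$, assuming $(^\perp(\mathcal{W}\cap\mathcal{F})_n,(\mathcal{W}\cap\mathcal{F})_n)$ is complete.} Take $M\to F\to K$ with $F\in\mathcal{F}$ and $K\in\mathcal{C}\cap\mathcal{W}$. By completeness there is $K\to L'\to H$ with $L'\in(\mathcal{W}\cap\mathcal{F})_n$ and $H\in{^\perp(\mathcal{W}\cap\mathcal{F})_n}$. Thickness gives $H\in\mathcal{W}$, since $K,L'\in\mathcal{W}$. The dual of Lemma \ref{lem1} (or (ET4)) then gives $\mathbb{E}$-triangles $M\to Y\to H$ and $F\to Y\to L'$. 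Proposition \ref{cor2.20}(1) gives $Y\in\mathcal{F}_n$. By (4), $\mathbb{E}(H,M)=0$, so $Y\cong M\oplus H$.

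The main obstacle is then to show that $\mathcal{F}_n$ is closed under direct summands. The paper assumes subcategories are summand-closed, but we would verify it for $\mathcal{F}_n$ using Lemma \ref{lem2.19}. Take special proper $\mathcal{F}$-coresolutions of $M$ and of $H$; their direct sum is a special proper $\mathcal{F}$-coresolution of $Y$. Its $(-n+1)$-th cycle is the direct sum of the two cycles and lies in $\mathcal{F}$ by Lemma \ref{lem2.19}(2). Since $\mathcal{F}$ is closed under summands, the cycle coming from $M$ lies in $\mathcal{F}$, so Lemma \ref{lem2.19}(3) gives $M\in\mathcal{F}_n$.
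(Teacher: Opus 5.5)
Your proof of $(1)\Leftrightarrow(2)\Leftrightarrow(3)$ is correct and takes a different route from the paper. You first prove $\mathcal{F}_k\cap\mathcal{W}=(\mathcal{W}\cap\mathcal{F})_k$ directly from Lemma~\ref{lem2.19} (this is the paper's Corollary~\ref{cor1}, which the paper only derives after this theorem), and then read off both equivalences from Proposition~\ref{cor2.20}. The paper instead uses an induction with Lemma~\ref{lem1} and (ET4)$^{\rm op}$.

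\textbf{The gap.} It lies in $(2)\Rightarrow(4)$ and $(4)\Rightarrow(1)$. In both you glue a deflation followed by an inflation: $L\to M\to X$, respectively $F\to K\to L'$. None of (ET4), (ET4)$^{\rm op}$, Lemma~\ref{lem1} or its dual does this; they need composable inflations, composable deflations, or two triangles sharing an end term.

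In $(2)\Rightarrow(4)$, the $Y$ you want exists exactly when the class $\eta\in\mathbb{E}(M,N)$ of $N\to L\to M$ extends along $M\to X$. The obstruction is the Yoneda product of $\eta$ and $\varepsilon$ in $\mathbb{E}^2(H,N)$, which is precisely the higher-$\mathbb{E}$ input you claim to avoid. The step cannot be formal: your argument uses only $\mathbb{E}(H,L)=0$, never $N\in\mathcal{F}$ or $H\in\mathcal{C}\cap\mathcal{W}$. It would therefore show that $\mathbb{E}(H,-)$ vanishes on every deflation-quotient of $L$. Over $\mathbb{Z}/4$, take $L=\mathbb{Z}/4\to M=\mathbb{Z}/2$ and $H=\mathbb{Z}/2$: then $\mathrm{Ext}^1(H,L)=0\neq\mathrm{Ext}^1(H,M)$.

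In $(4)\Rightarrow(1)$, the triangles $M\to Y\to H$ and $F\to Y\to L'$ cannot both hold in general. In a finite-length abelian category, comparing classes in the Grothendieck group gives $2[K]=0$, i.e.\ $K=0$. The consistent configuration $M\to Y\to L'$, $F\to Y\to H$ again has an obstruction, this time in $\mathbb{E}^2(H,M)$.

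\textbf{The fix.} Arrange the triangles so that the needed vanishing lands in the six-term sequence of the first $\mathbb{E}$, as the paper does.

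For $(2)\Rightarrow(4)$: take $L\to D\to K$ with $D\in\mathcal{W}\cap\mathcal{F}$ and $K\in(\mathcal{W}\cap\mathcal{F})_{n-1}$, and apply (ET4) to the composable inflations $N\to L\to D$. This gives $N\to D\to F$ and $M\to F\to K$, with $F\in\mathcal{F}$ because $\mathcal{F}$ is closed under cones of inflations. Now:
\begin{enumerate}
\item $\mathcal{B}(H,D)\to\mathcal{B}(H,K)$ is onto because $\mathbb{E}(H,L)=0$, and it factors through $\mathcal{B}(H,F)$;
\item $\mathbb{E}(H,F)=0$ because $H\in\mathcal{C}\cap\mathcal{W}={^\perp\mathcal{F}}$;
\item exactness of $\mathcal{B}(H,F)\to\mathcal{B}(H,K)\to\mathbb{E}(H,M)\to\mathbb{E}(H,F)$ then gives $\mathbb{E}(H,M)=0$.
\end{enumerate}
This is where $N\in\mathcal{F}$ and hereditariness are actually used.

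For $(4)\Rightarrow(2)$: take $N\to L\to M$ with $N\in\mathcal{F}$ and $L\in\mathcal{C}\cap\mathcal{W}$. Exactness of $\mathbb{E}(H,N)\to\mathbb{E}(H,L)\to\mathbb{E}(H,M)$ gives $\mathbb{E}(H,L)=0$ for every $H\in{^\perp(\mathcal{W}\cap\mathcal{F})}_n\cap\mathcal{W}$. Take a triangle $L\to L_1\to K'$ from the complete pair $({^\perp(\mathcal{W}\cap\mathcal{F})}_n,(\mathcal{W}\cap\mathcal{F})_n)$; thickness gives $K'\in\mathcal{W}$, so the triangle splits. Hence $L$ is a summand of $L_1\in(\mathcal{W}\cap\mathcal{F})_n$, which is a right perpendicular class and so closed under summands. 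Your own $(2)\Rightarrow(1)$ then finishes, and the summand-closure argument for $\mathcal{F}_n$ is no longer needed.
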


\begin{proof} $(1)\Rightarrow (2).$ Assume that $M\in\mathcal{F}_n$. We will proceed by induction on $n$. For $n=0$, there is an $\mathbb{E}$-triangle $\xymatrix{N\ar[r]&L\ar[r]&M\ar@{-->}[r]&}$ with $L\in\mathcal{C}\cap \mathcal{W}$ and $N\in\mathcal{F}$ as $(\mathcal{C}\cap\mathcal{W}, \mathcal{F})$ is a complete cotorsion pair. Since $\mathcal{F}$ is closed under extensions and $M, N\in\mathcal{F}$, we have $L\in\mathcal{C}\cap\mathcal{W}\cap\mathcal{F}\subseteq \mathcal{W}\cap\mathcal{F}$.

Now assume $n\geqslant 1$, there is an $\mathbb{E}$-triangle $\xymatrix{M\ar[r]&F\ar[r]&K\ar@{-->}[r]&}$ with $K\in\mathcal{C}\cap \mathcal{W}$ and $F\in\mathcal{F}$ since $(\mathcal{C}\cap\mathcal{W}, \mathcal{F})$ is a complete cotorsion pair. Hence $K\in\mathcal{F}_{n-1}$ by Lemma \ref{lem2.19}. So the induction hypothesis yields an $\mathbb{E}$-triangle $\xymatrix{N_1\ar[r]&L_1\ar[r]&K\ar@{-->}[r]&}$ with $L_1\in(\mathcal{W}\cap \mathcal{F})_{n-1}$ and $N_1\in\mathcal{F}$. It follows from Lemma \ref{lem1} that there is a commutative diagram with $\mathbb{E}$-triangles
$$\xymatrix{&M\ar[d]\ar@{=}[r]&M\ar[d]&\\
N_1\ar@{=}[d]\ar[r]&D\ar[r]\ar[d]&F\ar@{-->}[r]\ar[d]&\\
N_1\ar[r]&L_1\ar@{-->}[d]\ar[r]&K\ar@{-->}[d]\ar@{-->}[r]&\\
&&&}$$
where $D\in\mathcal{F}$ since $F, N_1\in\mathcal{F}$ and $\mathcal{F}$ is closed under extensions. By the case where $n=0$, there is an $\mathbb{E}$-triangle
 $\xymatrix{N\ar[r]&L_2\ar[r]&D\ar@{-->}[r]&}$ with $L_2\in\mathcal{W}\cap \mathcal{F}$ and $N\in\mathcal{F}$. Then there is a commutative diagram with $\mathbb{E}$-triangles
  $$\xymatrix{N\ar[r]\ar@{=}[d]&L\ar[r]\ar[d]&M\ar@{-->}[r]\ar[d]&\\
N\ar[r]&L_2\ar[r]\ar[d]&D\ar@{-->}[r]\ar[d]&\\
&L_1\ar@{=}[r]\ar@{-->}[d]&L_1\ar@{-->}[d]&\\
&&&}$$
by $({\rm ET}4)^{\rm op}$. Hence $L\in(\mathcal{W}\cap\mathcal{F})_n$ as $L_2\in\mathcal{W}\cap\mathcal{F}$ and $L_1\in(\mathcal{W}\cap\mathcal{F})_{n-1}$. Then the top row in the above diagram is the required $\mathbb{E}$-triangle.

$(2)\Rightarrow (3).$ Assume that $\xymatrix@=2em{N\ar[r]&L\ar[r]&M\ar@{-->}[r]&}$ is an $\mathbb{E}$-triangle with $L\in(\mathcal{W}\cap \mathcal{F})_{n}$ and $N\in\mathcal{F}$. So there is an $\mathbb{E}$-triangle $\xymatrix@=2em{L\ar[r]&H\ar[r]&K\ar@{-->}[r]&}$  with $H\in\mathcal{W}\cap\mathcal{F}$ and $K\in(\mathcal{W}\cap \mathcal{F})_{n-1}$. It follows from {\rm (ET4)} that there is a commutative diagram with $\mathbb{E}$-triangles
 $$\xymatrix{N\ar[r]\ar@{=}[d]&L\ar[r]\ar[d]&M\ar@{-->}[r]\ar[d]&\\
N\ar[r]&H\ar[r]\ar[d]&F\ar@{-->}[r]\ar[d]&\\
&K\ar@{=}[r]\ar@{-->}[d]&K\ar@{-->}[d]&\\
&&&}$$
Since the cotorsion pair $(\mathcal{C}\cap\mathcal{W}, \mathcal{F})$ is hereditary, $\mathcal{F}$ is closed under cones of inflations by Lemma \ref{lem2.18}, one can obtain $F\in\mathcal{F}$ since $N, H\in\mathcal{F}$. Hence the right  column in the above diagram is the required $\mathbb{E}$-triangle.

$(3)\Rightarrow (1)$ is obvious.

$(2)\Rightarrow (4).$ Assume that $\xymatrix@=2em{N\ar[r]&L\ar[r]&M\ar@{-->}[r]&}$ is an $\mathbb{E}$-triangle with $L\in(\mathcal{W}\cap \mathcal{F})_{n}$ and $N\in\mathcal{F}$. Hence there is an $\mathbb{E}$-triangle $\xymatrix@=2em{L\ar[r]&D\ar[r]&K\ar@{-->}[r]&}$  with $K\in(\mathcal{W}\cap \mathcal{F})_{n-1}$ and $D\in\mathcal{W}\cap\mathcal{F}$. It follows from (ET4) that there is a commutative diagram with $\mathbb{E}$-triangles
$$\xymatrix{N\ar[r]\ar@{=}[d]&L\ar[r]\ar[d]&M\ar@{-->}[r]\ar[d]&\\
N\ar[r]&D\ar[r]\ar[d]&F\ar@{-->}[r]\ar[d]&\\
&K\ar@{=}[r]\ar@{-->}[d]&K\ar@{-->}[d]&\\
&&&}$$
Hence $F\in\mathcal{F}$ as $N, D\in\mathcal{F}$ and $\mathcal{F}$ is closed under cones of inflations. Since ${^\perp(\mathcal{W}\cap \mathcal{F})}_n\cap\mathcal{W}\subseteq {^\perp(\mathcal{W}\cap \mathcal{F})}\cap\mathcal{W}=\mathcal{C}\cap\mathcal{W}={^\perp\mathcal{F}}$, we have $\mathbb{E}(H, F)=0$ for any $H\in{^\perp(\mathcal{W}\cap \mathcal{F})}_n\cap\mathcal{W}$. Moreover, we have the following commutative diagram with exact rows
$$\xymatrix{\mathcal{B}(H, L)\ar[d]\ar[r]&\mathcal{B}(H, D)\ar[d]^{f}\ar[r]^{h}&\mathcal{B}(H, K)\ar@{=}[d]\ar[r]&\mathbb{E}(H, L)=0\ar[d]&\\
\mathcal{B}(H, M)\ar[r]&\mathcal{B}(H, F)\ar[r]^g&\mathcal{B}(H, K)\ar[r]&\mathbb{E}(H, M)\ar[r]&\mathbb{E}(H, F)=0}$$
The surjectivity of $h$, together with the equality $h = g f$, implies that $g$ is surjective. Moreover, we have $\mathbb{E}(H, F) = 0$. From the exactness of the second row in the preceding commutative diagram, it follows that $\mathbb{E}(H, M) = 0$, as required.

$(4)\Rightarrow (2).$ Assume that $\mathbb{E}(H, M)=0$ for any $H\in{^\perp(\mathcal{W}\cap \mathcal{F})}_n\cap\mathcal{W}$. Since the cotorsion pair $(\mathcal{C}\cap \mathcal{W}, \mathcal{F})$ is complete, there is an $\mathbb{E}$-triangle $\xymatrix@=2em{N\ar[r]&L\ar[r]&M\ar@{-->}[r]&}$ with $L\in\mathcal{C}\cap\mathcal{W}$ and $N\in\mathcal{F}$. It suffices to show that $L\in(\mathcal{W}\cap\mathcal{F})_n$. For any $H\in{^\perp(\mathcal{W}\cap \mathcal{F})}_n\cap\mathcal{W}$, we have $\mathbb{E}(H, N)=0$ since ${^\perp(\mathcal{W}\cap \mathcal{F})}_n\cap\mathcal{W}\subseteq {^\perp(\mathcal{W}\cap \mathcal{F})}\cap\mathcal{W}=\mathcal{C}\cap\mathcal{W}={^\perp\mathcal{F}}$. Therefore, the exact sequence of abelian groups $\xymatrix{0=\mathbb{E}(H, N)\ar[r]& \mathbb{E}(H, L)\ar[r]& \mathbb{E}(H, M)=0}$ implies that $\mathbb{E}(H, L)=0$. This shows that $L\in({^\perp(\mathcal{W}\cap \mathcal{F})}_n\cap\mathcal{W})^\perp$. Since the cotorsion pair $(^\perp(\mathcal{W}\cap\mathcal{F})_n, (\mathcal{W}\cap\mathcal{F})_n)$ is complete by hypothesis, there is an $\mathbb{E}$-triangle $\xymatrix{L\ar[r]& L_1\ar[r]& K\ar@{-->}[r]&}$ with $L_1\in(\mathcal{W}\cap\mathcal{F})_n$ and $K\in{^\perp(\mathcal{W}\cap\mathcal{F})}_n$. It is easy to check that $L_1\in(\mathcal{W}\cap\mathcal{F})_n\subseteq \mathcal{W}$ since $\mathcal{W}$ is thick. Note that $L\in\mathcal{C}\cap\mathcal{W}\subseteq\mathcal{W}$. One can obtain $K\in \mathcal{W}$ by the thickness of $\mathcal{W}$, and $K\in{^\perp(\mathcal{W}\cap \mathcal{F})}_n\cap\mathcal{W}$. Hence the $\mathbb{E}$-triangle $\xymatrix{L\ar[r]& L_1\ar[r]& K\ar@{-->}[r]&}$ is split. Therefore $L\in(\mathcal{W}\cap\mathcal{F})_n$, as desired.
\end{proof}
\begin{cor}\label{cor1}Let $(\mathcal{C}, \mathcal{W}, \mathcal{F})$ be a hereditary Hovey triple in $(\mathcal{B}, \mathbb{E}, \mathfrak{s})$. Then $\mathcal{W}\cap\mathcal{F}_n=(\mathcal{W}\cap\mathcal{F})_n$ for any non-negative integer $n$.
\end{cor}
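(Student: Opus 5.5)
We have to show $\mathcal{W}\cap\mathcal{F}_n=(\mathcal{W}\cap\mathcal{F})_n$, and we prove the two inclusions separately, relying only on the thickness of $\mathcal{W}$ and on Theorem \ref{thm1}.

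For ``$\supseteq$'', let $M\in(\mathcal{W}\cap\mathcal{F})_n$. Then $M$ admits a finite coresolution
$$0\to M\to Y_0\to Y_{-1}\to\cdots\to Y_{-n}\to 0$$
with every $Y_{-k}\in\mathcal{W}\cap\mathcal{F}$. This is in particular an $\mathcal{F}$-coresolution of length $n$, so $M\in\mathcal{F}_n$. To see $M\in\mathcal{W}$, we use descending induction along the $\mathbb{E}$-cycles. The last cycle is $Y_{-n}\in\mathcal{W}$. Each $\mathbb{E}$-triangle $Z_{-k+1}\to Y_{-k}\to Z_{-k}$ has its last two terms in $\mathcal{W}$. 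By the two-out-of-three property of the thick class $\mathcal{W}$, the first term is then also in $\mathcal{W}$. Proceeding in this way we reach $M\in\mathcal{W}$.

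For ``$\subseteq$'', let $M\in\mathcal{W}\cap\mathcal{F}_n$. By Theorem \ref{thm1} $(1)\Rightarrow(2)$ there is an $\mathbb{E}$-triangle $N\to L\to M$ with $N\in\mathcal{F}$ and $L\in(\mathcal{W}\cap\mathcal{F})_n$. The first inclusion gives $L\in\mathcal{W}$, and since $M\in\mathcal{W}$ as well, thickness yields $N\in\mathcal{W}$. So $N\in\mathcal{W}\cap\mathcal{F}$. We now need to show that $M\in(\mathcal{W}\cap\mathcal{F})_n$.

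We do this by applying Proposition \ref{cor2.20}(2) to the triangle $N\to L\to M$ with $\mathcal{Y}=\mathcal{W}\cap\mathcal{F}$: here $L\in\mathcal{Y}_n$ and $N\in\mathcal{Y}\subseteq\mathcal{Y}_{n+1}$, so the proposition gives $M\in\mathcal{Y}_n$. The proposition requires $(^\perp(\mathcal{W}\cap\mathcal{F}),\mathcal{W}\cap\mathcal{F})$ to be a complete hereditary cotorsion pair. This holds because $(\mathcal{C},\mathcal{W}\cap\mathcal{F})$ is one of the two complete hereditary cotorsion pairs of the hereditary Hovey triple.

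The main point to verify is therefore just this application of Proposition \ref{cor2.20}(2), including the check that $(\mathcal{C},\mathcal{W}\cap\mathcal{F})$ satisfies its hypotheses. Everything else is thickness bookkeeping.
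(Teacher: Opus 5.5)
Your proof is correct. The first inclusion matches the paper; the reverse inclusion $\mathcal{W}\cap\mathcal{F}_n\subseteq(\mathcal{W}\cap\mathcal{F})_n$ takes a slightly longer route.

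\textbf{The paper's route.} It uses Theorem \ref{thm1} $(1)\Rightarrow(3)$ rather than $(1)\Rightarrow(2)$. This gives an $\mathbb{E}$-triangle $M\to F\to K$ with $F\in\mathcal{F}$ and $K\in(\mathcal{W}\cap\mathcal{F})_{n-1}\subseteq\mathcal{W}$. Thickness applied to the middle term gives $F\in\mathcal{W}\cap\mathcal{F}$. Then $M\in(\mathcal{W}\cap\mathcal{F})_n$ follows directly from the definition, with no further dimension theory.

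\textbf{Your route.} You apply thickness to the cocone $N$ and then invoke Proposition \ref{cor2.20}(2) with $\mathcal{Y}=\mathcal{W}\cap\mathcal{F}$. Your check of its hypotheses is right, since $\mathcal{C}={}^\perp(\mathcal{W}\cap\mathcal{F})$ and $(\mathcal{C},\mathcal{W}\cap\mathcal{F})$ is complete and hereditary. Through its proof, this brings in Lemma \ref{lem2.19} and part (1) of the proposition.

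\textbf{Comparison.} Because $N$ lies in $\mathcal{W}\cap\mathcal{F}$ itself, not merely in $(\mathcal{W}\cap\mathcal{F})_{n+1}$, the full proposition is more than you need. One application of (ET4) suffices:
\begin{itemize}
\item Take $N\to L\to M$ together with $L\to Y\to Z$, where $Y\in\mathcal{W}\cap\mathcal{F}$ and $Z\in(\mathcal{W}\cap\mathcal{F})_{n-1}$.
\item (ET4) yields $N\to Y\to E$ and $M\to E\to Z$.
\item $E\in\mathcal{W}\cap\mathcal{F}$ because $\mathcal{W}\cap\mathcal{F}$ is closed under cones of inflations.
\end{itemize}
This is exactly the diagram from the $(2)\Rightarrow(3)$ step of Theorem \ref{thm1}. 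So the two arguments perform the same two moves, thickness and one (ET4), in opposite orders. The paper's ordering is shorter because the (ET4) step is already packaged into Theorem \ref{thm1}(3).
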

\begin{proof} It is obvious that $(\mathcal{W}\cap \mathcal{F})_n\subseteq \mathcal{F}_n$. Since $\mathcal{W}$ is thick, it is easy to prove that $(\mathcal{W}\cap \mathcal{F})_n\subseteq \mathcal{W}$. Hence $(\mathcal{W}\cap \mathcal{F})_n\subseteq \mathcal{W}\cap\mathcal{F}_n$. Conversely, if $M\in\mathcal{W}\cap\mathcal{F}_n$, then $M\in\mathcal{F}_n$, and there is an $\mathbb{E}$-triangle $\xymatrix{M\ar[r]&F\ar[r]&K\ar@{-->}[r]&}$ with $F\in\mathcal{F}$ and $K\in(\mathcal{W}\cap\mathcal{F})_{n-1}$ by Theorem \ref{thm1}. Since $\mathcal{W}$ is thick, it is easy to see that $F\in\mathcal{W}$ as $M\in\mathcal{W}$ and $K\in(\mathcal{W}\cap\mathcal{F})_{n-1}\subseteq \mathcal{W}$. Hence $M\in(\mathcal{W}\cap\mathcal{F})_{n}$ as $F\in\mathcal{W}\cap\mathcal{F}$ and $K\in(\mathcal{W}\cap\mathcal{F})_{n-1}$, so we have $\mathcal{W}\cap\mathcal{F}_n\subseteq (\mathcal{W}\cap\mathcal{F})_{n}$. Therefore, $\mathcal{W}\cap\mathcal{F}_n=(\mathcal{W}\cap\mathcal{F})_n$.
\end{proof}

We are now in a position to prove Theorem \ref{thm2} in the introduction.

{\bf Proof of Theorem \ref{thm2}.} The case $n=0$ is obvious. Now assume $n>0$.

It follows from Theorem \ref{thm1} that $\mathcal{F}_n=(^\perp(\mathcal{W}\cap \mathcal{F})_n\cap\mathcal{W})^\perp$ and ${^\perp(\mathcal{W}\cap \mathcal{F})}_n\cap\mathcal{W}\subseteq {^\perp\mathcal{F}}_n$. In addition,
$$^\perp\mathcal{F}_n\subseteq ^\perp((\mathcal{W}\cap \mathcal{F})_n\cup\mathcal{F})={^\perp(\mathcal{W}\cap \mathcal{F})}_n\cap{^\perp\mathcal{F}}={^\perp(\mathcal{W}\cap \mathcal{F})}_n\cap\mathcal{C}\cap\mathcal{W}\subseteq{^\perp(\mathcal{W}\cap \mathcal{F})}_n\cap\mathcal{W}.$$
Thus ${^\perp(\mathcal{W}\cap \mathcal{F})}_n\cap\mathcal{W}={^\perp\mathcal{F}}_n$, and $({^\perp(\mathcal{W}\cap \mathcal{F})}_n\cap\mathcal{W}, \mathcal{F}_n)$ is a cotorsion pair. By Corollary \ref{cor1}, $\mathcal{W}\cap\mathcal{F}_n=(\mathcal{W}\cap\mathcal{F})_n$, then the core of $({^\perp(\mathcal{W}\cap \mathcal{F})}_n\cap\mathcal{W}, \mathcal{F}_n)$ is ${^\perp(\mathcal{W}\cap \mathcal{F})}_n\cap(\mathcal{W}\cap\mathcal{F})_n$.

Next we show that the cotorsion pair $({^\perp(\mathcal{W}\cap \mathcal{F})}_n\cap\mathcal{W}, \mathcal{F}_n)$ is complete. For any $M\in\mathcal{B}$, there is an $\mathbb{E}$-triangle $\xymatrix{M\ar[r]&H\ar[r]&N\ar@{-->}[r]&}$ with $N\in\mathcal{C}\cap\mathcal{W}$ and $H\in\mathcal{F}$ by the completeness of cotorsion pair $(\mathcal{C}\cap \mathcal{W}, \mathcal{F})$. Since the cotorsion pair $(^\perp(\mathcal{W}\cap\mathcal{F})_n, (\mathcal{W}\cap\mathcal{F})_n)$ is complete, there is an $\mathbb{E}$-triangle $\xymatrix{B\ar[r]&L_1\ar[r]&N\ar@{-->}[r]&}$ with $L_1\in{^\perp(\mathcal{W}\cap\mathcal{F})}_n$ and $B\in (\mathcal{W}\cap\mathcal{F})_n$. It follows from Lemma \ref{lem1} that there is a commutative with $\mathbb{E}$-triangles
$$\xymatrix{&B\ar[d]\ar@{=}[r]&B\ar[d]&\\
M\ar@{=}[d]\ar[r]&D\ar[r]\ar[d]&L_1\ar@{-->}[r]\ar[d]&\\
M\ar[r]&H\ar@{-->}[d]\ar[r]&N\ar@{-->}[d]\ar@{-->}[r]&\\
&&&}$$
Since $H\in\mathcal{F}\subseteq \mathcal{F}_n$ and $B\in(\mathcal{W}\cap\mathcal{F})_n\subseteq \mathcal{F}_n$, we have $D\in\mathcal{F}_n$ by Proposition \ref{cor2.20}(1). Note that $N\in\mathcal{C}\cap\mathcal{W}\subseteq\mathcal{W}$ and $B\in(\mathcal{W}\cap\mathcal{F})_n=\mathcal{W}\cap\mathcal{F}_n\subseteq\mathcal{W}$. One can obtain $L_1\in\mathcal{W}$ as $\mathcal{W}$ is thick. Therefore, $\xymatrix{M\ar[r]&D\ar[r]&L_1\ar@{-->}[r]&}$ is an $\mathbb{E}$-triangle with $L_1\in{^\perp(\mathcal{W}\cap\mathcal{F})}_n\cap\mathcal{W}$ and $D\in\mathcal{F}_n$.

On the other hand, we have an $\mathbb{E}$-triangle $\xymatrix{L\ar[r]&K\ar[r]&M\ar@{-->}[r]&}$ with $L\in\mathcal{F}$ and $K\in\mathcal{C}\cap\mathcal{W}$ by the completeness of cotorsion pair $(\mathcal{C}\cap \mathcal{W}, \mathcal{F})$. Since the cotorsion pair $(^\perp(\mathcal{W}\cap\mathcal{F})_n, (\mathcal{W}\cap\mathcal{F})_n)$ is complete, there is an $\mathbb{E}$-triangle $\xymatrix{F\ar[r]&K_1\ar[r]&K\ar@{-->}[r]&}$ with $K_1\in{^\perp(\mathcal{W}\cap\mathcal{F})}_n$ and $F\in (\mathcal{W}\cap\mathcal{F})_n$. It follows from (ET4)$^{\rm op}$ that there is a commutative diagram with $\mathbb{E}$-triangles
 $$\xymatrix{F\ar[r]\ar@{=}[d]&Y\ar[r]\ar[d]&L\ar@{-->}[r]\ar[d]&\\
F\ar[r]&K_1\ar[r]\ar[d]&K\ar@{-->}[r]\ar[d]&\\
&M\ar@{=}[r]\ar@{-->}[d]&M\ar@{-->}[d]&\\
&&&}$$
Since $L\in\mathcal{F}\subseteq \mathcal{F}_n$ and $F\in(\mathcal{W}\cap\mathcal{F})_n\subseteq \mathcal{F}_n$, we have $Y\in\mathcal{F}_n$ by Proposition \ref{cor2.20}(1). Note that $K\in\mathcal{C}\cap\mathcal{W}\subseteq\mathcal{W}$ and $F\in(\mathcal{W}\cap\mathcal{F})_n=\mathcal{W}\cap\mathcal{F}_n\subseteq\mathcal{W}$. One can obtain $K_1\in\mathcal{W}$ as $\mathcal{W}$ is thick. Then $\xymatrix{Y\ar[r]&K_1\ar[r]&M\ar@{-->}[r]&}$ is an $\mathbb{E}$-triangle with $K_1\in{^\perp(\mathcal{W}\cap\mathcal{F})}_n\cap\mathcal{W}$ and $Y\in\mathcal{F}_n$. Therefore, the cotorsion pair $({^\perp(\mathcal{W}\cap \mathcal{F})}_n\cap\mathcal{W}, \mathcal{F}_n)$ is complete.

Finally, $({^\perp(\mathcal{W}\cap \mathcal{F})}_n\cap\mathcal{W}, \mathcal{F}_n)$ is also hereditary by Lemma \ref{lemmma:3.1}.

(2) It follows from Lemma \ref{lemmma:3.1} that the complete cotorsion pair $({^\perp(\mathcal{W}\cap \mathcal{F})}_n, (\mathcal{W}\cap\mathcal{F})_n)$ is
hereditary. Since $\mathcal{W}$ is thick, we obtain a hereditary Hovey triple $\mathcal{M}_n = ({^\perp(\mathcal{W}\cap \mathcal{F})}_n, \mathcal{W}, \mathcal{F}_n)$.
Moreover, ${^\perp(\mathcal{W}\cap \mathcal{F})}_n\cap\mathcal{F}_n$ is a Frobenius category category with the class of projective-injective objects ${^\perp(\mathcal{W}\cap \mathcal{F})}_n\cap(\mathcal{W}\cap \mathcal{F})_n$.

 The remaining assertions hold by Theorem \ref{thm0}.\hfill$\Box$

A direct consequence of Theorem \ref{thm2} yields the following result, which improves \cite[Theorem 3.6]{SWZ} and \cite[Theorem A]{El Maaouy2}.

\begin{cor}\label{cor3.4} Let $\mathcal{B}$ be a WIC exact category, and $\mathcal{M}=(\mathcal{C}, \mathcal{W}, \mathcal{F})$  a hereditary Hovey triple in $\mathcal{B}$. Assume that $(^\perp(\mathcal{W}\cap\mathcal{F})_n, (\mathcal{W}\cap\mathcal{F})_n)$ is a complete cotorsion pair for some non-negative integer $n$. Then

$(1)$ The pair $({^\perp(\mathcal{W}\cap \mathcal{F})}_n\cap\mathcal{W}, \mathcal{F}_n)$
is a complete and hereditary cotorsion pair with core $^\perp(\mathcal{W}\cap\mathcal{F})_n\cap (\mathcal{W}\cap\mathcal{F})_n$.

$(2)$ The triple $\mathcal{M}_n=({^\perp(\mathcal{W}\cap \mathcal{F})}_n, \mathcal{W}, \mathcal{F}_n)$ is a hereditary Hovey triple in $\mathcal{A}$, and ${^\perp(\mathcal{W}\cap \mathcal{F})}_n\cap\mathcal{F}_n$ is a Frobenius category with the class of projective-injective objects ${^\perp(\mathcal{W}\cap \mathcal{F})}_n\cap(\mathcal{W}\cap \mathcal{F})_n$. Furthermore, we obtain triangle equivalences
$$\underline{{^\perp(\mathcal{W}\cap \mathcal{F})}_n\cap\mathcal{F}_n}\simeq {\rm Ho}(\mathcal{M}_n)= {\rm Ho}(\mathcal{M})\simeq \underline{\mathcal{C}\cap\mathcal{F}}.$$
\end{cor}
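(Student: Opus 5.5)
Corollary \ref{cor3.4} is a specialization of Theorem \ref{thm2}, so the plan is to check that its hypotheses hold and then read off the conclusions.

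\textbf{Step 1: exact categories are extriangulated.} By \cite[Example 2.13]{NP}, a WIC exact category $\mathcal{B}$ becomes an extriangulated category $(\mathcal{B}, \mathbb{E}, \mathfrak{s})$. Here $\mathbb{E}=\mathrm{Ext}^1_{\mathcal{B}}$ (Yoneda extensions), the conflations are the admissible short exact sequences, the inflations are the admissible monics and the deflations are the admissible epics.

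\textbf{Step 2: WIC transfers.} Remark \ref{Re:4.12}(1) says that, for exact categories, weak idempotent completeness is equivalent to Condition (WIC). Hence $\mathcal{B}$ is a WIC extriangulated category, as required throughout the paper.

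\textbf{Step 3: the cotorsion-theoretic notions coincide.} Since $\mathbb{E}=\mathrm{Ext}^1$, the following notions mean the same thing in the exact and the extriangulated sense:
\begin{itemize}
\item the orthogonals ${}^\perp(-)$ and $(-)^\perp$;
\item complete and hereditary cotorsion pairs;
\item thickness of $\mathcal{W}$, and hence Hovey triples and their hereditary versions;
\item coresolution dimensions, so the classes $\mathcal{F}_n$ and $(\mathcal{W}\cap\mathcal{F})_n$ are unchanged.
\end{itemize}
Consequently $\mathcal{M}$ is a hereditary Hovey triple in the extriangulated sense, and $({}^\perp(\mathcal{W}\cap\mathcal{F})_n, (\mathcal{W}\cap\mathcal{F})_n)$ is a complete cotorsion pair there.

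\textbf{Step 4: apply Theorem \ref{thm2}.} Theorem \ref{thm2} now gives both (1) and (2) directly, with one translation still to justify. It produces a Frobenius \emph{extriangulated} structure on ${}^\perp(\mathcal{W}\cap\mathcal{F})_n\cap\mathcal{F}_n$. This subcategory is extension-closed in $\mathcal{B}$, so its inherited extriangulated structure is the exact structure of conflations with all terms in the subcategory. It is therefore a Frobenius exact category, with the projective-injectives named in the statement.

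\textbf{Step 5: model-theoretic side.} Hovey triples in $\mathcal{B}$ correspond to exact model structures in Gillespie's sense, and these agree with admissible model structures (\cite[Section 5]{NP}). The homotopy categories therefore agree, and the triangle equivalences follow from Theorem \ref{thm0} as in Theorem \ref{thm2}. The ``$\mathcal{A}$'' in (2) should be read as $\mathcal{B}$.

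The only step needing care is the identification of notions in Steps 2 and 3 (WIC and Frobenius). Both are covered by the cited references, so I expect no genuine obstacle.
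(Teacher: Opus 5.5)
Your proposal is correct and follows the paper's route: the paper presents Corollary~\ref{cor3.4} as a direct consequence of Theorem~\ref{thm2}, obtained by regarding a WIC exact category as a WIC extriangulated category (via \cite[Example 2.13]{NP} and Remark~\ref{Re:4.12}(1)), and that is exactly your specialization. Your additional checks make explicit what the paper leaves implicit: that orthogonals, thickness, coresolution dimensions and the inherited Frobenius structure on an extension-closed subcategory mean the same thing in the exact and extriangulated settings, and that $\mathcal{A}$ should be read as $\mathcal{B}$.
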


\begin{rem}\label{rem:3.5} (1) Note that Corollary \ref{cor3.4} has been proved by Shao-Wang-Zhang in \cite[Theorem 3.6]{SWZ} under the additional condition that the WIC exact category has enough projectives and enough injectives.

(2)  Note that every abelian category is a WIC exact category. Therefore, Corollary \ref{cor3.4} refines \cite[Theorem A]{El Maaouy2} by eliminating the superfluous hypotheses. More precisely, the author constructed the hereditary Hovey triple $\mathcal{M}_n=({^\perp(\mathcal{W}\cap \mathcal{F})}_n, \mathcal{W}, \mathcal{F}_n)$ under the assumptions that $\mathcal{B}$ is an abelian category and that $(\mathcal{C}, \mathcal{W}, \mathcal{F})$ is a hereditary Hovey triple on $\mathcal{B}$ satisfying the following conditions:

\begin{enumerate}
    \item[(i)] $(\mathcal{W},\mathcal{W}^\perp)$ is a complete cotorsion pair.
    \item[(ii)] $(^\perp(\mathcal{W}\cap\mathcal{F})_k, (\mathcal{W}\cap\mathcal{F})_k)$ is a complete cotorsion pair for every integer $k\in\{0,1,\dots,n\}$.
\end{enumerate}

\end{rem}

%
%
%
%
%
%
\section{\bf Model structures arising from $\mathcal{GI}(\xi)_n$}\label{model-sts-arsing-from-GI}\label{section4}

This section is devoted to studying model structures induced by objects of finite $\xi$-$\mathcal{G}$injective dimension, and to proving Theorem \ref{thm4}. In what follows, we always assume that $\xi$ is a proper class in $(\mathcal{B}, \mathbb{E}, \mathfrak{s})$.  It follows from \cite[Theorem 3.2]{HZZ} that $(\mathcal{B}, \mathbb{E}_\xi, \mathfrak{s}_\xi)$ is an extriangulated category with $\mathbb{E}_\xi:=\mathbb{E}|_\xi$ and $\mathfrak{s}_\xi:=\mathfrak{s}|_{\mathbb{E}_\xi}$, where  $$\mathbb{E}_\xi(C, A)=\{\delta\in\mathbb{E}(C, A)~|~\delta~ \textrm{is realized as an $\mathbb{E}$-triangle}\xymatrix{A\ar[r]^x&B\ar[r]^y&C\ar@{-->}[r]^{\delta}&}~\textrm{in}~\xi\}$$ for any $A, C\in\mathcal{C}$.

\begin{lem}\label{lemma4.1} Let $\xi$ be a proper class in an extriangulated category $(\mathcal{B}, \mathbb{E}, \mathfrak{s})$.

$(1)$  $\xymatrix{A\ar[r]^x&B\ar[r]^y&C\ar@{-->}[r]^{\delta}&}$ is an $\mathbb{E}$-triangle in $\xi$ if and only if it is an $\mathbb{E}_\xi$-triangle in the extriangulated category $(\mathcal{B}, \mathbb{E}_\xi, \mathfrak{s}_\xi)$.

$(2)$ If $(\mathcal{B}, \mathbb{E}, \mathfrak{s})$ is a weakly idempotent complete extriangulated category with enough $\xi$-injectives and $I$ is an object in $\mathcal{B}$, then  $I$ is $\xi$-injective if and only if $\mathbb{E}_\xi(B, I)=0$ for any $B\in\mathcal{B}$.

$(3)$ If $(\mathcal{B}, \mathbb{E}, \mathfrak{s})$ has enough $\xi$-projectives and enough $\xi$-injectives, then $(\mathcal{B}, \mathbb{E}_\xi, \mathfrak{s}_\xi)$ has enough projectives and enough injectives.
\end{lem}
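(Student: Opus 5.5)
Part (1) is a reformulation of the definitions, and the plan is to prove it directly. In the ambient category $(\mathcal{B},\mathbb{E},\mathfrak{s})$ there are equivalence classes of triples. By \cite[Theorem 3.2]{HZZ}, $\mathfrak{s}_\xi$ is the restriction of $\mathfrak{s}$ to $\mathbb{E}_\xi$. Hence a pair $(A\to B\to C,\delta)$ with $\delta\in\mathbb{E}_\xi(C,A)$ and $\mathfrak{s}_\xi(\delta)=[A\to B\to C]$ is just an $\mathbb{E}$-triangle whose extension is realized by some $\mathbb{E}$-triangle in $\xi$.

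The direction from $\xi$ to $\mathbb{E}_\xi$-triangle is immediate from the definition of $\mathbb{E}_\xi$. For the converse, suppose $\delta$ is realized by some $A\to B'\to C$ in $\xi$, while our triangle $A\to B\to C$ also realizes $\delta$. The two sequences are then equivalent, so there is an isomorphism $B\cong B'$ compatible with the maps. Since $\xi$ is closed under isomorphisms, our $\mathbb{E}$-triangle lies in $\xi$.

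For (3), let $A\in\mathcal{B}$. Enough $\xi$-injectives gives a triangle $A\to I\to K$ in $\xi$ with $I\in\mathcal{I}(\xi)$; by (1) this is an $\mathbb{E}_\xi$-triangle. It remains to check that $I$ is injective in $(\mathcal{B},\mathbb{E}_\xi,\mathfrak{s}_\xi)$, i.e. $\mathbb{E}_\xi(-,I)=0$, which is exactly what (2) provides. Dually, $\xi$-projectives are projective in $\mathbb{E}_\xi$. So (3) follows from (1) and (2).

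The main step is (2).

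\emph{($\Leftarrow$)} Assume $\mathbb{E}_\xi(B,I)=0$ for all $B$, and take any $\xi$-triangle $A\xrightarrow{x}B\xrightarrow{y}C\overset{\delta}{\dashrightarrow}$. Apply the long exact sequence of \cite[Corollary 3.12]{NP} in the extriangulated category $(\mathcal{B},\mathbb{E}_\xi,\mathfrak{s}_\xi)$ to this triangle, using (1). It gives
\[
0\to\mathcal{B}(C,I)\to\mathcal{B}(B,I)\to\mathcal{B}(A,I)\to\mathbb{E}_\xi(C,I)=0,
\]
so $I$ is $\xi$-injective.

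\emph{($\Rightarrow$)} Assume $I$ is $\xi$-injective and let $\delta\in\mathbb{E}_\xi(B,I)$, realized by $I\xrightarrow{x}E\to B$ in $\xi$. $\xi$-injectivity makes $\mathcal{B}(E,I)\to\mathcal{B}(I,I)$ surjective. So $1_I$ factors through $x$, i.e. $x$ is a section. Then the triangle splits, which means $\delta=0$; the section criterion for split triangles is recalled in the preliminaries.

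The hypotheses of weak idempotent completeness and enough $\xi$-injectives are not needed for this argument. They are presumably included for the way (2) is used later, for instance to identify $\mathcal{I}(\xi)$ with the injectives of $\mathbb{E}_\xi$ as a cotorsion-pair class.

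The only genuinely delicate point is (1): making sure that realizations in $\mathfrak{s}_\xi$ are equivalence classes, so that closure of $\xi$ under isomorphisms gives the converse. Everything else is formal.
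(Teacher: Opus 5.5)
Your arguments for (1), for the implication ``$I$ $\xi$-injective $\Rightarrow$ $\mathbb{E}_\xi(-,I)=0$'' in (2), and for (3) are fine; (3) only uses that implication. The gap is in the converse implication of (2).

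Nakaoka--Palu's Corollary 3.12, applied to a $\xi$-triangle $A\xrightarrow{x}B\xrightarrow{y}C\overset{\delta}{\dashrightarrow}$, gives exactness of $\mathcal{B}(C,I)\xrightarrow{y^*}\mathcal{B}(B,I)\xrightarrow{x^*}\mathcal{B}(A,I)\to\mathbb{E}_\xi(C,I)$. It says nothing to the left of $\mathcal{B}(C,I)$, since deflations need not be epimorphisms (think of triangulated categories). But injectivity of $y^*$ is part of the definition of $\xi$-injective quoted in the paper, so the leading $0$ in your displayed sequence is unjustified.

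It also cannot be recovered from $\mathbb{E}_\xi(-,I)=0$ alone. So your remark that the hypotheses are superfluous is wrong for enough $\xi$-injectives; WIC indeed plays no role. Here is a counterexample. Let $\mathcal{B}=\mathrm{D}^b(k)$ for a field $k$, and let $\xi$ consist of the triangles $A\xrightarrow{f}B\to C\overset{\delta}{\dashrightarrow}$ such that $\mathcal{B}(f,k)$ is surjective, i.e.\ $v_*\delta=0$ for all $v\colon A\to k$.
\begin{itemize}
\item $\xi$ is a proper class; for saturation use exactness of $\mathcal{B}(A_2,k)\to\mathbb{E}(C,k)\to\mathbb{E}(B_2,k)$.
\item Taking $v=1_k$ shows $\mathbb{E}_\xi(-,k)=0$.
\item The triangle $k[-1]\to 0\to k\dashrightarrow$ lies in $\xi$, because $\mathcal{B}(k[-1],k)=0$.
\item For this triangle, $\mathcal{B}(k,k)\to\mathcal{B}(0,k)$ is not injective, so $k\notin\mathcal{I}(\xi)$.
\end{itemize}
In this example there are not enough $\xi$-injectives.

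The missing step is exactly where the paper uses enough $\xi$-injectives. Take a $\xi$-triangle $I\xrightarrow{s}E\to K\dashrightarrow$ with $E\in\mathcal{I}(\xi)$. Since $\mathbb{E}_\xi(K,I)=0$, it splits, so $rs=1_I$ for some $r$ and $I$ is a direct summand of $E$. Hence $I\in\mathcal{I}(\xi)$, because that class is closed under direct summands. Concretely, if $c\colon C\to I$ satisfies $cy=0$, then $(sc)y=0$ forces $sc=0$ by $\xi$-injectivity of $E$, whence $c=rsc=0$. The surjectivity part you already have.
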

\begin{proof} (1) is obvious.

(2) Assume that $I$ is $\xi$-injective in $(\mathcal{B}, \mathbb{E}, \mathfrak{s})$ . It is easy to check that $\mathbb{E}_\xi(B, I)=0$ for any object $B$ in $\mathcal{B}$. Conversely, if $\mathbb{E}_\xi(B, I)=0$ for any $B\in\mathcal{B}$. Since $(\mathcal{B}, \mathbb{E}, \mathfrak{s})$ has enough $\xi$-injectives, there is an $\mathbb{E}_\xi$-triangle $\xymatrix{I\ar[r]&E\ar[r]&K\ar@{-->}[r]&}$  with $E\in\mathcal{I}(\xi)$ and $K\in\mathcal{B}$. It is easy to see that this $\mathbb{E}_\xi$-triangle splits as $\mathbb{E}_\xi(K, I)=0$ by hypothesis, which implies that $I\in\mathcal{I}(\xi)$.

(3) It is easy to obtain by (2) and its dual version.
\end{proof}

In the following, we always assume that $(\mathcal{B}, \mathbb{E}, \mathfrak{s})$ is a weakly idempotent complete extriangulated category with enough $\xi$-projectives and enough $\xi$-injectives, that is, $(\mathcal{B}, \mathbb{E}_\xi, \mathfrak{s}_\xi)$ is a weakly idempotent complete extriangulated category with enough projectives and enough injectives. Let $\mathcal{X}$ be a class of objects of $\mathcal{B}$. Put
$$\mathcal{X}^\perp=\{Z\in\mathcal{B}\mid \mathbb{E}_\xi(X, Z)=0, \ \forall \ X\in\mathcal{X}\} \ \mbox{and} \ ^\perp\mathcal{X}=\{Z\in\mathcal{B}\mid \mathbb{E}_\xi(Z, X)=0, \ \forall \ X\in\mathcal{X}\}.$$

The following result is essentially taken from \cite[Lemma 3.1]{GLZ}, we obtain it from the dual proof given there.

\begin{lem}\label{lem4.2} 
$^\perp\mathcal{GI}(\xi)$ is thick in $(\mathcal{B}, \mathbb{E}_\xi, \mathfrak{s}_\xi)$ and
$\mathcal{GI}(\xi)\cap{^\perp\mathcal{GI}}(\xi)=\mathcal{I}(\xi)$.
\end{lem}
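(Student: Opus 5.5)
The plan is to dualize \cite[Lemma 3.1]{GLZ} inside the extriangulated category $(\mathcal{B}, \mathbb{E}_\xi, \mathfrak{s}_\xi)$. By Lemma \ref{lemma4.1}, this category has enough projectives and enough injectives, and its injectives are exactly $\mathcal{I}(\xi)$. Hence Lemma \ref{lem2.11} supplies higher extension groups $\mathbb{E}_\xi^i$ with long exact sequences. The first step is to show that for $G\in\mathcal{GI}(\xi)$ and any $X$, the condition $\mathbb{E}_\xi(X,G)=0$ forces $\mathbb{E}_\xi^i(X,G)=0$ for all $i\geqslant 1$. To see this, take a complete $\xi$-injective coresolution through $G$; it gives an $\mathbb{E}_\xi$-triangle $G'\to I\to G$ with $I\in\mathcal{I}(\xi)$ and $G'\in\mathcal{GI}(\xi)$. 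The long exact sequence then yields $\mathbb{E}_\xi^{i+1}(X,G')\cong\mathbb{E}_\xi^{i}(X,G)$ for $i\geqslant 1$. Iterating this shift along the coresolution identifies each $\mathbb{E}^i_\xi(X,G)$ with $\mathbb{E}_\xi(X,G'')$ for some $G''\in\mathcal{GI}(\xi)$. Consequently, $X\in{^\perp\mathcal{GI}}(\xi)$ if and only if $\mathbb{E}^{i}_\xi(X,\mathcal{GI}(\xi))=0$ for all $i\geqslant1$.

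With this characterization, thickness follows from the long exact sequence in the first variable. Closure under direct summands is immediate because $\mathbb{E}_\xi$ is additive. Given an $\mathbb{E}_\xi$-triangle $A\to B\to C$, the sequence
\[\mathbb{E}^i_\xi(C,G)\to\mathbb{E}^i_\xi(B,G)\to\mathbb{E}^i_\xi(A,G)\to\mathbb{E}^{i+1}_\xi(C,G)\]
shows that if two of the three terms have vanishing $\mathbb{E}^{\geqslant1}_\xi(-,G)$, then so does the third.

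The one delicate case is $A,B\in{^\perp\mathcal{GI}}(\xi)$ implying $C\in{^\perp\mathcal{GI}}(\xi)$. Here we only get $\mathbb{E}^{i+1}_\xi(C,G)=0$ for $i\geqslant1$ and must still show $\mathbb{E}_\xi(C,G)=0$. For this I will use the reverse shift $G\to I_{-1}\to G_{-1}$ coming from the coresolution, with $G_{-1}\in\mathcal{GI}(\xi)$. It gives $\mathbb{E}_\xi(C,G)\cong\mathbb{E}^2_\xi(C,G_{-1})$, since $\mathbb{E}^{\geqslant1}_\xi(C,I_{-1})=0$. The right-hand side vanishes by what was already shown. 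This dimension-shifting in both directions is the step I expect to be the main obstacle, because it depends on the higher $\mathbb{E}_\xi^i$ being well behaved.

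For the equality, we have $\mathcal{I}(\xi)\subseteq\mathcal{GI}(\xi)$, and $\mathcal{I}(\xi)\subseteq{^\perp\mathcal{GI}}(\xi)$ needs to be checked. Given $E\in\mathcal{I}(\xi)$ and $G=Z_n(\mathbf{I})$, every $\mathbb{E}_\xi$-triangle $G\to M\to E$ splits. To prove this, embed $G\to I_{n-1}\to G_{-1}$ using the complete coresolution, which is $\mathcal{B}(\mathcal{I}(\xi),-)$-exact. By $\mathcal{B}(E,-)$-exactness, $\mathcal{B}(E,I_{n-1})\to\mathcal{B}(E,G_{-1})$ is surjective. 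The connecting map in the sequence $\mathcal{B}(E,G_{-1})\to\mathbb{E}_\xi(E,G)\to\mathbb{E}_\xi(E,I_{n-1})=0$ is therefore zero, so $\mathbb{E}_\xi(E,G)=0$. Conversely, let $G\in\mathcal{GI}(\xi)\cap{^\perp\mathcal{GI}}(\xi)$ and take the $\mathbb{E}_\xi$-triangle $G\to I\to G_{-1}$ with $G_{-1}\in\mathcal{GI}(\xi)$. This triangle splits because $\mathbb{E}_\xi(G_{-1},G)$ is a quotient of, hence controlled by, $\mathbb{E}$-vanishing: more precisely, $G\in{^\perp\mathcal{GI}}(\xi)$ and thickness give $G_{-1}\in{^\perp\mathcal{GI}}(\xi)$, so $\mathbb{E}_\xi(G_{-1},G)=0$. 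Thus $G$ is a direct summand of $I$, and hence $G\in\mathcal{I}(\xi)$.
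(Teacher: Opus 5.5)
Your strategy (pass to the higher groups $\mathbb{E}^i_\xi$ in $(\mathcal{B},\mathbb{E}_\xi,\mathfrak{s}_\xi)$ and dimension-shift along the two-sided complete $\xi$-injective coresolution) is the right one. It is also all the paper intends: it gives no proof beyond saying the lemma is the dual of Gao--Lu--Zhang's Lemma 3.1. However, you have attached the two dimension shifts to the wrong triangles, and in the case $A,B\in{^\perp\mathcal{GI}}(\xi)\Rightarrow C\in{^\perp\mathcal{GI}}(\xi)$ the argument fails as written.

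For an $\mathbb{E}_\xi$-triangle $K\to I\to L$ with $I\in\mathcal{I}(\xi)$, the long exact sequence gives $\mathbb{E}^i_\xi(X,L)\cong\mathbb{E}^{i+1}_\xi(X,K)$ for $i\geqslant 1$. Hence $G'\to I_0\to G$ raises the degree, $\mathbb{E}^i_\xi(X,G)\cong\mathbb{E}^{i+1}_\xi(X,G')$. The triangle $G\to I_{-1}\to G_{-1}$ lowers it, $\mathbb{E}^{i+1}_\xi(X,G)\cong\mathbb{E}^{i}_\xi(X,G_{-1})$.

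In your first step you need the lowering shift through $G\to I_{-1}\to G_{-1}$; iterating along $G'\to I_0\to G$ only pushes degrees up. Since the shift changes $G$, what you actually prove is the statement for the whole class: $\mathbb{E}_\xi(X,\mathcal{GI}(\xi))=0$ implies $\mathbb{E}^i_\xi(X,\mathcal{GI}(\xi))=0$ for all $i\geqslant1$.

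In the cone case, your isomorphism $\mathbb{E}_\xi(C,G)\cong\mathbb{E}^2_\xi(C,G_{-1})$ is false. The triangle $G\to I_{-1}\to G_{-1}$ only exhibits $\mathbb{E}_\xi(C,G)$ as the cokernel of $\mathcal{B}(C,I_{-1})\to\mathcal{B}(C,G_{-1})$, and gives $\mathbb{E}^2_\xi(C,G)\cong\mathbb{E}_\xi(C,G_{-1})$; neither yields vanishing. The repair is to swap the triangles and use $G'\to I_0\to G$ with $G'\in\mathcal{GI}(\xi)$:
\[0=\mathbb{E}_\xi(C,I_0)\longrightarrow\mathbb{E}_\xi(C,G)\longrightarrow\mathbb{E}^2_\xi(C,G')\longrightarrow\mathbb{E}^2_\xi(C,I_0)=0.\]
Here $\mathbb{E}^2_\xi(C,G')=0$, because in the long exact sequence of $A\to B\to C$ it sits between $\mathbb{E}_\xi(A,G')=0$ and $\mathbb{E}^2_\xi(B,G')=0$ (the latter by the corrected first step). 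Every $\xi$-$\mathcal{G}$injective object is a cycle of a doubly infinite complete coresolution, so both triangles are available. With this correction your thickness argument is complete.

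Your proof of $\mathcal{I}(\xi)\subseteq{^\perp\mathcal{GI}}(\xi)$, using $\mathcal{B}(\mathcal{I}(\xi),-)$-exactness of the coresolution, is correct. For the reverse inclusion, your argument passes through the cone case of thickness, which is exactly the step that needed repair. It is simpler to avoid thickness altogether. For $G\in\mathcal{GI}(\xi)\cap{^\perp\mathcal{GI}}(\xi)$, the triangle $G'\to I_0\to G$ has its class in $\mathbb{E}_\xi(G,G')=0$, so it splits. Then $G$ is a direct summand of $I_0\in\mathcal{I}(\xi)$, hence $G\in\mathcal{I}(\xi)$.
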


\begin{lem}\label{lem4.3} 
$\mathcal{I}(\xi)_n\subseteq {^\perp\mathcal{GI}}(\xi)$.
\end{lem}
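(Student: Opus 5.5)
We argue by induction on $n$, working throughout in the extriangulated category $(\mathcal{B}, \mathbb{E}_\xi, \mathfrak{s}_\xi)$, which has enough projectives and enough injectives. The tools are that $^\perp\mathcal{GI}(\xi)$ is thick (Lemma \ref{lem4.2}) and that it contains every $\xi$-injective object.

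For $n=0$ we must show $\mathcal{I}(\xi)\subseteq {^\perp\mathcal{GI}}(\xi)$. Let $I\in\mathcal{I}(\xi)$ and $G\in\mathcal{GI}(\xi)$, and take an $\mathbb{E}_\xi$-extension $\delta\in\mathbb{E}_\xi(I,G)$, realized by an $\mathbb{E}$-triangle $G\to B\to I$ in $\xi$. By definition of $\xi$-$\mathcal{G}$injective objects, $G$ is a cycle of a complete $\xi$-injective coresolution. Hence there is a $\mathcal{B}(\mathcal{I}(\xi),-)$-exact $\mathbb{E}$-triangle $G'\to I_0\to G$ in $\xi$ with $I_0\in\mathcal{I}(\xi)$, and the triangle $G'\to I_0\to G$ has $G$ as its right-hand term. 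Because it is $\mathcal{B}(I,-)$-exact, the long exact sequence (Lemma \ref{lem2.11}) for $\mathbb{E}_\xi$ gives
$$\mathcal{B}(I,I_0)\to\mathcal{B}(I,G)\to\mathbb{E}_\xi(I,G')\to\mathbb{E}_\xi(I,I_0).$$
This alone does not settle the claim, so we use the standard trick instead. The $\mathcal{B}(\mathcal{I}(\xi),-)$-exactness, together with the fact that $G$ also admits an inflation $G\to I_{-1}$ in $\xi$ with $I_{-1}$ $\xi$-injective, implies that $\delta$ is a pullback of the extension of $G$ by $I_{-1}$. More directly: apply $\mathbb{E}_\xi(I,-)$ to $G\to I_{-1}\to G''$, where $G''$ is the next cycle. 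We get
$$\mathcal{B}(I,I_{-1})\to\mathcal{B}(I,G'')\to\mathbb{E}_\xi(I,G)\to\mathbb{E}_\xi(I,I_{-1})=0.$$
The first map is surjective by $\mathcal{B}(\mathcal{I}(\xi),-)$-exactness of that triangle, so $\mathbb{E}_\xi(I,G)=0$. This base case may also be part of Lemma \ref{lem4.2}'s content (cf.\ \cite{GLZ}), in which case we simply cite it.

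For the inductive step, let $M\in\mathcal{I}(\xi)_n$ with $n\geqslant 1$. By definition there is an $\mathbb{E}_\xi$-triangle $M\to I\to K$ with $I\in\mathcal{I}(\xi)$ and $K\in\mathcal{I}(\xi)_{n-1}$. By induction $K\in{^\perp\mathcal{GI}}(\xi)$, and $I\in{^\perp\mathcal{GI}}(\xi)$ by the base case. Thickness of $^\perp\mathcal{GI}(\xi)$ (Lemma \ref{lem4.2}), specifically closure under cocones of deflations, then gives $M\in{^\perp\mathcal{GI}}(\xi)$. Alternatively, closure under cocones follows directly from the long exact sequence of Lemma \ref{lem2.11} together with $\mathbb{E}^2_\xi(K,G)=0$. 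The vanishing $\mathbb{E}^2_\xi(K,G)=0$ holds because $\mathcal{GI}(\xi)$ is closed under cocones of $\xi$-deflations onto injective cosyzygies, so dimension shifting applies.

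The main obstacle is the base case, specifically verifying that the $\mathcal{B}(\mathcal{I}(\xi),-)$-exactness is exactly what kills $\mathbb{E}_\xi(I,G)$. The induction itself is immediate from thickness.
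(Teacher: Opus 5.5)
Your argument is correct, but it is organized differently from the paper's.

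\paragraph{The paper's route.} The paper does not induct on $n$. It takes $C\in\mathcal{I}(\xi)_n$ and $G\in\mathcal{GI}(\xi)$, and applies $\mathcal{B}(C,-)$ to an $\mathbb{E}_\xi$-triangle $G\to I\to K$ coming from a complete $\xi$-injective coresolution. It then invokes the dual of \cite[Lemma 5.3]{HZZ}, which says these triangles remain $\mathcal{B}(C,-)$-exact whenever $C$ has finite $\xi$-injective dimension. Surjectivity of $\mathcal{B}(C,I)\to\mathcal{B}(C,K)$ together with $\mathbb{E}_\xi(C,I)=0$ then gives $\mathbb{E}_\xi(C,G)=0$.

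\paragraph{Your route.} Your ``more direct'' base case is exactly this computation when $C$ is itself $\xi$-injective. There the needed exactness is simply the definition of a complete $\xi$-injective coresolution. As you note, this case is already contained in Lemma \ref{lem4.2}, since $\mathcal{I}(\xi)=\mathcal{GI}(\xi)\cap{}^\perp\mathcal{GI}(\xi)$. You then replace the external lemma by the two-out-of-three property of ${}^\perp\mathcal{GI}(\xi)$ from Lemma \ref{lem4.2}.

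\paragraph{Comparison.} Your route shows that Lemma \ref{lem4.3} is a formal consequence of Lemma \ref{lem4.2} and needs no further citation. The paper's route instead proves the stronger exactness statement for every $C$ of finite $\xi$-injective dimension. It also never uses the thickness of ${}^\perp\mathcal{GI}(\xi)$, whose proof the paper only indicates by dualizing \cite[Lemma 3.1]{GLZ}. Your use of thickness is not circular. Closure of ${}^\perp\mathcal{GI}(\xi)$ under cocones of deflations can be checked directly, without Lemma \ref{lem4.3}, by extending and lifting maps along a triangle $G\to I'\to G''$ with $I'\in\mathcal{I}(\xi)$ and $G''\in\mathcal{GI}(\xi)$.

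\paragraph{Cosmetic points.}
\begin{itemize}
\item In the base case, delete the abandoned first attempt and the sentence about $\delta$ being a pullback; only the ``more directly'' argument is needed.
\item In your alternative remark, the correct reason for $\mathbb{E}^2_\xi(K,G)=0$ is the following. $G$ sits in an $\mathbb{E}_\xi$-triangle $G\to I'\to G''$ with $I'\in\mathcal{I}(\xi)$ and $G''\in\mathcal{GI}(\xi)$. Hence $\mathbb{E}^2_\xi(K,G)$ is a quotient of $\mathbb{E}_\xi(K,G'')=0$. The phrase about closure under cocones of deflations onto injective cosyzygies does not express this.
\end{itemize}
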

\begin{proof} Assume $G\in\mathcal{GI}(\xi)$ and $C\in\mathcal{I}(\xi)_n$. Then there is an $\mathbb{E}_\xi$-triangle $\xymatrix@=1.8em{G\ar[r]&I\ar[r]&K\ar@{-->}[r]&}$ with $I\in\mathcal{I}(\xi)$ and $K\in\mathcal{GI}(\xi)$. Applying the functor $\mathcal{B}(C, -)$ to above $\mathbb{E}_\xi$-triangle, one has an exact sequence of abelian groups $\mathcal{B}(C, I)\rightarrow\mathcal{B}(C, K)\rightarrow \mathbb{E}_\xi(C, G)\rightarrow \mathbb{E}_\xi(C, I)=0$. Meanwhile,  there is an exact sequence of abelian groups $\mathcal{B}(C, I)\rightarrow\mathcal{B}(C, K)\rightarrow 0$ by the dual of \cite[Lemma 5.3]{HZZ}, which implies that $\mathbb{E}_\xi(C, G)=0$. Hence $\mathcal{I}(\xi)_n\subseteq {^\perp\mathcal{GI}}(\xi)$.
\end{proof}

The following result is easily obtained from the dual version of \cite[Theorem 4.16 and Proposition 5.2]{HZZ} and Horseshoe Lemma (cf. dual version of \cite[Lemma 3.3]{HZZ1}).
\begin{lem}\label{lem4.4}Let 
$n$ be a non-negative integer. Then $\mathcal{GI}(\xi)_n$ is closed under extensions and closed under cones of inflations in $(\mathcal{B}, \mathbb{E}_\xi, \mathfrak{s}_\xi)$.
\end{lem}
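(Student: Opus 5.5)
The plan is to prove both closure properties by induction on $n$, using two inputs: the base case $n=0$, which comes from the dual of \cite[Theorem 4.16 and Proposition 5.2]{HZZ}, and a Horseshoe-type construction in $(\mathcal{B}, \mathbb{E}_\xi, \mathfrak{s}_\xi)$. All triangles below are $\mathbb{E}_\xi$-triangles, and this category is WIC with enough projectives and injectives.

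\textbf{Base case $n=0$.} We need that $\mathcal{GI}(\xi)$ is closed under extensions and under cones of inflations.
\begin{itemize}
\item Closure under extensions is the dual of \cite[Theorem 4.16]{HZZ}.
\item For cones, take $A\to B\to C$ with $A,B\in\mathcal{GI}(\xi)$. By the dual of \cite[Proposition 5.2]{HZZ}, an object $M$ lies in $\mathcal{GI}(\xi)$ if and only if $\mathbb{E}^{i}_\xi(\mathcal{I}(\xi),M)=0$ for all $i\geq1$ and $M$ admits a suitable left $\mathcal{I}(\xi)$-resolution that stays $\mathcal{B}(\mathcal{I}(\xi),-)$-exact.
\item From the long exact sequence of Lemma \ref{lem2.11}, $\mathbb{E}_\xi^i(I,C)=0$ for $i\geq 1$ and $I\in\mathcal{I}(\xi)$.
\item For the resolution, write $A$ as a cone $A'\to I_A\to A$ of a $\mathcal{GI}(\xi)$-object into an injective, and similarly $B'\to I_B\to B$. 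A Horseshoe-type pullback (Lemma \ref{lem1}) then yields a triangle $B''\to I_B\to C$ with $B''$ an extension of $A$ and $B'$, so $B''\in\mathcal{GI}(\xi)$ by extension closure.
\item Iterating this builds the required coresolution. Alternatively, one can appeal directly to the dual of \cite[Lemma 3.3]{HZZ1}.
\end{itemize}

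\textbf{Extensions for $n\geq1$.} Let $A\to B\to C$ be a triangle with $A,C\in\mathcal{GI}(\xi)_n$.
\begin{itemize}
\item Choose triangles $A\to G_A\to A'$ and $C\to G_C\to C'$ with $G_A,G_C\in\mathcal{GI}(\xi)$ and $A',C'\in\mathcal{GI}(\xi)_{n-1}$. One can even take $G_A,G_C\in\mathcal{I}(\xi)$, since there are enough injectives and cones of $\mathcal{GI}$-objects into injectives stay in $\mathcal{GI}(\xi)_{n-1}$; this uses the standard dimension-shifting fact, the analogue of Lemma \ref{lem2.19} for the class $\mathcal{GI}(\xi)$ (dual \cite[Theorem 4.16]{HZZ}).
\item The Horseshoe Lemma (dual \cite[Lemma 3.3]{HZZ1}), using $\mathbb{E}_\xi(C,G_A)=0$ when $G_A$ is injective, gives a $3\times3$ diagram with middle column $B\to G_A\oplus G_C\to B'$ and bottom row $A'\to B'\to C'$.
\item By induction $B'\in\mathcal{GI}(\xi)_{n-1}$, hence $B\in\mathcal{GI}(\xi)_n$.
\end{itemize}

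\textbf{Cones for $n\geq1$.} Let $A\to B\to C$ be a triangle with $A,B\in\mathcal{GI}(\xi)_n$.
\begin{itemize}
\item Take $B\to I\to B'$ with $I$ injective and $B'\in\mathcal{GI}(\xi)_{n-1}$.
\item (ET4) gives triangles $A\to I\to D$ and $C\to D\to B'$.
\item By dimension shifting, $D\in\mathcal{GI}(\xi)_{n-1}$.
\item Then $C$ sits in a triangle $C\to D\to B'$ with both end terms in $\mathcal{GI}(\xi)_{n-1}$. This does not immediately bound $C$, so we argue differently: complete $D\to J\to D'$ with $J$ injective. (ET4) gives $C\to J\to E$ together with $B'\to E\to D'$.
\item By induction via extension closure, $E\in\mathcal{GI}(\xi)_{n-1}$, since $B',D'\in\mathcal{GI}(\xi)_{n-1}$. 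Hence $C\in\mathcal{GI}(\xi)_n$.
\end{itemize}

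\textbf{Main obstacle.} The key step I expect to be hardest is the dimension-shifting fact: if $M\in\mathcal{GI}(\xi)_n$ and $M\to I\to K$ is a triangle with $I\in\mathcal{I}(\xi)$, then $K\in\mathcal{GI}(\xi)_{n-1}$. I would prove it via a pushout comparison with a given $\mathcal{GI}(\xi)$-coresolution, exactly as in Lemma \ref{lem2.19}(4). That argument needs closure of $\mathcal{GI}(\xi)$ under cones of inflations and extensions, which is the base case, together with $\mathbb{E}_\xi(\mathcal{GI}(\xi)\text{-cones},\mathcal{I}(\xi))=0$, which holds since the objects of $\mathcal{I}(\xi)$ are injective.
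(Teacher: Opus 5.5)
Your proposal is correct and follows essentially the route the paper indicates. The paper cites the dual of \cite[Theorem 4.16]{HZZ} for the case $n=0$, uses dimension shifting (credited there to the dual of \cite[Proposition 5.2]{HZZ}; you prove it by the comparison argument of Lemma \ref{lem2.19}(3)$\Rightarrow$(4), which only needs the base case and injectivity of $I$), and uses the Horseshoe Lemma for extension closure, while your double application of (ET4) supplies the cone case for $n\geqslant 1$ that the paper leaves implicit. The only slip is cosmetic: the base-case triangle $B''\to I_B\to C$ comes from (ET4)$^{\rm op}$ applied to the composed deflation $I_B\to B\to C$, not from Lemma \ref{lem1}.
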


Now we give a description of objects in $\mathcal{GI}(\xi)_n$.

\begin{thm}\label{thm3} 
For  any $M\in\mathcal{B}$ and non-negative integer $n$, consider the following conditions:

$(1)$ $M\in\mathcal{GI}(\xi)_n$;

$(2)$ There is an $\mathbb{E}_\xi$-triangle $\xymatrix@=2em{N\ar[r]&L\ar[r]&M\ar@{-->}[r]&}$ with $N\in\mathcal{GI}(\xi)$ and $L\in\mathcal{I}(\xi)_{n}$;

$(3)$ There is an $\mathbb{E}_\xi$-triangle $\xymatrix@=2em{M\ar[r]&G\ar[r]&K\ar@{-->}[r]&}$ with $G\in\mathcal{GI}(\xi)$ and $K\in\mathcal{I}(\xi)_{n-1}$;

$(4)$ $\mathbb{E}_\xi(H, M)=0$ for every $H\in{^\perp\mathcal{I}}(\xi)_n\cap{^\perp\mathcal{GI}}(\xi)$.

Then $(1)\Leftrightarrow (2)\Leftrightarrow (3)\Rightarrow (4)$. Moreover,  if both $(^\perp\mathcal{I}(\xi)_n, \mathcal{I}(\xi)_n)$ and $(^\perp\mathcal{GI}(\xi), \mathcal{GI}(\xi))$ are complete cotorsion pairs in $(\mathcal{B}, \mathbb{E}_\xi, \mathfrak{s}_\xi)$, then $(4)\Rightarrow (2)$ holds.
\end{thm}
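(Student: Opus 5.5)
The plan is to mirror the proof of Theorem \ref{thm1}, working throughout in $(\mathcal{B}, \mathbb{E}_\xi, \mathfrak{s}_\xi)$. There is no Hovey triple here, so I substitute the following facts. First, $\mathcal{GI}(\xi)$ is closed under extensions and cones of inflations (Lemma \ref{lem4.4} with $n=0$). Second, every $G\in\mathcal{GI}(\xi)$ admits an $\mathbb{E}_\xi$-triangle $G\to I\to G'$ with $I\in\mathcal{I}(\xi)$ and $G'\in\mathcal{GI}(\xi)$, coming from a complete $\xi$-injective coresolution. Third, $\mathcal{GI}(\xi)\cap{^\perp\mathcal{GI}}(\xi)=\mathcal{I}(\xi)$ and ${^\perp\mathcal{GI}}(\xi)$ is thick (Lemma \ref{lem4.2}). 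Fourth, $\mathcal{I}(\xi)_n\subseteq{^\perp\mathcal{GI}}(\xi)$ (Lemma \ref{lem4.3}). Fifth, $\mathcal{I}(\xi)\subseteq\mathcal{GI}(\xi)$. In effect ${^\perp\mathcal{GI}}(\xi)$ plays the role of $\mathcal{W}$ and $\mathcal{I}(\xi)$ that of $\mathcal{W}\cap\mathcal{F}$.

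\emph{$(1)\Rightarrow(2)$.} I argue by induction on $n$. For $n=0$, take $0\to M\to M$. For $n\ge1$, pick $M\to E\to K$ with $E\in\mathcal{I}(\xi)$ (enough injectives). By the dual of Lemma \ref{lem2.19} applied through Lemma \ref{lem4.4}, or by a dimension-shifting argument using the Horseshoe-type facts quoted for Lemma \ref{lem4.4}, we get $K\in\mathcal{GI}(\xi)_{n-1}$. Induction gives $N_1\to L_1\to K$ with $N_1\in\mathcal{GI}(\xi)$ and $L_1\in\mathcal{I}(\xi)_{n-1}$. Lemma \ref{lem1} then yields $N_1\to D\to E$ and $M\to D\to L_1$, and $D\in\mathcal{GI}(\xi)$ by extension closure. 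Next take $D\to I\to G'$ with $I$ $\xi$-injective and $G'\in\mathcal{GI}(\xi)$; this is a $\mathcal{B}(\mathcal{I}(\xi),-)$-exact triangle. Combine it with $M\to D\to L_1$ via (ET4) to get $M\to I\to L'$ and $L_1\to L'\to G'$. This route produces the form (3) rather than (2). So I would instead prove $(1)\Rightarrow(3)$ directly: from $M\to G\to G''$, apply (ET4)$^{\rm op}$ with triangles over $M$, in the order of Theorem \ref{thm1}. The resulting cone lies in $\mathcal{GI}(\xi)\cap\mathcal{GI}(\xi)_{n-1}$-type positions and must be shown to have finite $\xi$-injective dimension. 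Here one uses that an object of $\mathcal{GI}(\xi)$ with finite $\mathcal{I}(\xi)$-coresolution dimension (indeed lying in ${^\perp\mathcal{GI}}(\xi)$ by thickness and Lemma \ref{lem4.3}) is in $\mathcal{I}(\xi)$ by Lemma \ref{lem4.2}.

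\emph{$(2)\Rightarrow(3)$.} Take $L\to E\to K$ with $E\in\mathcal{I}(\xi)$ and $K\in\mathcal{I}(\xi)_{n-1}$, and apply (ET4) to $N\to L\to M$. This gives $N\to E\to G$ and $M\to G\to K$, and $G\in\mathcal{GI}(\xi)$ since the class is closed under cones of inflations. \emph{$(3)\Rightarrow(1)$} holds by Lemma \ref{lem4.4}, since $\mathcal{I}(\xi)_{n-1}\subseteq\mathcal{GI}(\xi)_{n-1}$ and a $\mathcal{GI}(\xi)$-object followed by dimension shifting gives the bound; alternatively use Proposition-\ref{cor2.20}-style arguments. \emph{$(3)\Rightarrow(2)$} is the analogue of the $(1)\Rightarrow(2)$ construction in Theorem \ref{thm1}.

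\emph{$(2)\Rightarrow(4)$} copies Theorem \ref{thm1} verbatim. From $N\to L\to M$ and $L\to D\to K$ with $D\in\mathcal{I}(\xi)$ and $K\in\mathcal{I}(\xi)_{n-1}$, (ET4) gives $N\to D\to F$ with $F\in\mathcal{GI}(\xi)$. For $H\in{^\perp\mathcal{I}}(\xi)_n\cap{^\perp\mathcal{GI}}(\xi)$ we have $\mathbb{E}_\xi(H,F)=0$ and $\mathbb{E}_\xi(H,L)=0$. The same diagram chase then shows $\mathbb{E}_\xi(H,M)=0$.

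\emph{$(4)\Rightarrow(2)$.} Use completeness of $({^\perp\mathcal{GI}}(\xi),\mathcal{GI}(\xi))$ to get $N\to L\to M$ with $L\in{^\perp\mathcal{GI}}(\xi)$ and $N\in\mathcal{GI}(\xi)$. Since $\mathbb{E}_\xi(H,N)=0$, we obtain $L\in({^\perp\mathcal{I}}(\xi)_n\cap{^\perp\mathcal{GI}}(\xi))^\perp$. Completeness of $({^\perp\mathcal{I}}(\xi)_n,\mathcal{I}(\xi)_n)$ gives $L\to L_1\to K$ with $L_1\in\mathcal{I}(\xi)_n\subseteq{^\perp\mathcal{GI}}(\xi)$. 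Thickness then gives $K\in{^\perp\mathcal{GI}}(\xi)\cap{^\perp\mathcal{I}}(\xi)_n$, so the triangle splits and $L\in\mathcal{I}(\xi)_n$.

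I expect the main obstacle to be $(1)\Rightarrow(2)$/(3). There one must extract an honest $\mathcal{I}(\xi)_{n-1}$ object from a $\mathcal{GI}(\xi)$-coresolution, since $\mathcal{GI}(\xi)$ is not the right half of a known complete cotorsion pair. The key tool is that the $\xi$-injective envelopes of Gorenstein objects are $\mathcal{B}(\mathcal{I}(\xi),-)$-exact, together with Lemma \ref{lem4.2}.
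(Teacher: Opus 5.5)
\textbf{The gap: $(1)\Rightarrow(2)$.} This is the substantive implication among (1)--(3), and the paper handles it by citing the dual of Proposition~5.5 of Hu--Zhang--Zhou. Your by-hand argument does not work as written, for four reasons.
\begin{itemize}
\item \emph{The base case is wrong.} For $n=0$, condition (2) asks for $N\to L\to M$ with $L\in\mathcal{I}(\xi)_0=\mathcal{I}(\xi)$. The triangle $0\to M\to M$ qualifies only when $M$ is already $\xi$-injective.
\item \emph{The inductive step uses an unavailable hypothesis.} Deducing $K\in\mathcal{GI}(\xi)_{n-1}$ from an arbitrary $M\to E\to K$ via Lemma~\ref{lem2.19} requires $({}^\perp\mathcal{GI}(\xi),\mathcal{GI}(\xi))$ to be complete and hereditary. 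That is not assumed in this part of the theorem.
\item \emph{The wrong triangle is used after the homotopy pullback.} You take $D\to I\to G'$ with (ET4), which leads nowhere. The fallback, showing that some cone ``has finite $\xi$-injective dimension,'' presupposes exactly what must be produced. Lemma~\ref{lem4.2} only upgrades objects already known to lie in ${}^\perp\mathcal{GI}(\xi)$, and an object of $\mathcal{GI}(\xi)_{n-1}$ need not lie in $\mathcal{I}(\xi)_{n-1}$.
\item \emph{$(3)\Rightarrow(2)$ is only asserted by analogy} with Theorem~\ref{thm1}. You do not identify what replaces the completeness of $(\mathcal{C}\cap\mathcal{W},\mathcal{F})$ used there.
\end{itemize}

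\textbf{The missing ingredient} is the \emph{left} half of a complete $\xi$-injective coresolution; your listed ``second fact'' is the right half. Every $G\in\mathcal{GI}(\xi)$ equals some $Z_k(\mathbf{I})$. It therefore sits in an $\mathbb{E}_\xi$-triangle $G'\to I\to G$ with $G'=Z_{k+1}(\mathbf{I})\in\mathcal{GI}(\xi)$ and $I\in\mathcal{I}(\xi)$. With this fact the induction closes:
\begin{itemize}
\item For $n=0$, condition (2) is just this triangle for $G=M$.
\item For $n\geqslant 1$, take $M\to G_0\to K$ with $G_0\in\mathcal{GI}(\xi)$ and $K\in\mathcal{GI}(\xi)_{n-1}$ directly from the definition of $\mathcal{GI}(\xi)_n$; no comparison lemma is needed. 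The induction hypothesis gives $N_1\to L_1\to K$ with $N_1\in\mathcal{GI}(\xi)$ and $L_1\in\mathcal{I}(\xi)_{n-1}$.
\item Lemma~\ref{lem1} then yields $N_1\to D\to G_0$ and $M\to D\to L_1$, with $D\in\mathcal{GI}(\xi)$ by extension closure. This is already condition (3).
\item Now take $G'\to I\to D$ and apply (ET4)$^{\rm op}$ to the deflations $I\to D\to L_1$. This gives $G'\to L\to M$ and $L\to I\to L_1$, so $L\in\mathcal{I}(\xi)_n$, which is (2). This last step is precisely $(3)\Rightarrow(2)$.
\end{itemize}

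\textbf{The remaining implications are fine.} $(2)\Rightarrow(3)$ and $(4)\Rightarrow(2)$ match the paper. Your $(2)\Rightarrow(4)$ takes a different route. You reuse the (ET4) diagram and the exact sequence involving only $\mathbb{E}_\xi$. The paper instead uses $\mathbb{E}^2_\xi$ via Lemma~\ref{lem2.11}, which needs enough projectives and injectives. Your version is valid and slightly more elementary.
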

\begin{proof} $(1)\Rightarrow (2)$. It follows from the dual version of \cite[Proposition 5.5]{HZZ}.

$(2)\Rightarrow (3)$. Assume that $\xymatrix@=2em{N\ar[r]&L\ar[r]&M\ar@{-->}[r]&}$ is an $\mathbb{E}_\xi$-triangle with $L\in\mathcal{I}(\xi)_{n}$ and $N\in\mathcal{GI}(\xi)$. So there is an $\mathbb{E}_\xi$-triangle $\xymatrix@=2em{L\ar[r]&I\ar[r]&K\ar@{-->}[r]&}$  with $I\in\mathcal{I}(\xi)$ and $K\in\mathcal{I}(\xi)_{n-1}$. It follows from {\rm (ET4)} that there is a commutative diagram with $\mathbb{E}_\xi$-triangles
 $$\xymatrix{N\ar[r]\ar@{=}[d]&L\ar[r]\ar[d]&M\ar@{-->}[r]\ar[d]&\\
N\ar[r]&I\ar[r]\ar[d]&G\ar@{-->}[r]\ar[d]&\\
&K\ar@{=}[r]\ar@{-->}[d]&K\ar@{-->}[d]&\\
&&&}$$
It follows from the dual version of \cite[Theorem 4.16]{HZZ} that $G\in\mathcal{GI}(\xi)$ since $N, I\in\mathcal{GI}(\xi)$. Hence the right column in the above diagram is the required $\mathbb{E}_\xi$-triangle.

$(3)\Rightarrow (1)$. It is obvious.

$(2)\Rightarrow (4)$.  Assume that $\xymatrix@=2em{N\ar[r]&L\ar[r]&M\ar@{-->}[r]&}$ is an $\mathbb{E}_\xi$-triangle with $N\in\mathcal{GI}(\xi)$ and $L\in\mathcal{I}(\xi)_{n}$. Next we claim that $\mathbb{E}_\xi^2(C, N)=0$ for any $C\in{^\perp\mathcal{GI}}(\xi)$. Since $N\in\mathcal{GI}(\xi)$, there is an
$\mathbb{E}_\xi$-triangle $\xymatrix@=2em{N\ar[r]&I\ar[r]&G\ar@{-->}[r]&}$ with $G\in\mathcal{GI}(\xi)$ and $I\in\mathcal{I}(\xi)$. Hence there is an exact sequence of abelian groups $\xymatrix@=1.8em{0=\mathbb{E}_\xi(C, G)\ar[r]&\mathbb{E}_\xi^2(C, N)\ar[r]&\mathbb{E}_\xi^2(C, I)=0}$ by Lemma \ref{lem2.11}, which implies that $\mathbb{E}_\xi^2(C, N)=0$. For any $H\in{^\perp\mathcal{I}(\xi)}_n\cap{^\perp\mathcal{GI}}(\xi)$, there is an exact sequence of abelian groups $\xymatrix@=1.8em{0=\mathbb{E}_\xi(H, L)\ar[r]&\mathbb{E}_\xi(H, M)\ar[r]&\mathbb{E}_\xi^2(H, N)=0}$ by Lemma \ref{lem2.11} again. So $\mathbb{E}_\xi(M, H)=0$, as desired.

Now assume that $(^\perp\mathcal{I}(\xi)_n, \mathcal{I}(\xi)_n)$ and $(^\perp\mathcal{GI}(\xi), \mathcal{GI}(\xi))$ are complete cotorsion pairs in $(\mathcal{B}, \mathbb{E}_\xi, \mathfrak{s}_\xi)$.

$(4)\Rightarrow (2)$. Assume that $\mathbb{E}_\xi(H, M)=0$ for every $H\in{^\perp\mathcal{I}(\xi)}_n\cap{^\perp\mathcal{GI}}(\xi)$. Then there is an $\mathbb{E}_\xi$-triangle $\xymatrix@=2em{N\ar[r]&L\ar[r]&M\ar@{-->}[r]&}$ with $L\in{^\perp\mathcal{GI}}(\xi)$ and $N\in\mathcal{GI}(\xi)$ since the cotorsion pair $(^\perp\mathcal{GI}(\xi), \mathcal{GI}(\xi))$ is complete. Next we claim that $L\in\mathcal{I}(\xi)_n$. Since $\mathbb{E}_\xi(C, N)=0$ for any $C\in{^\perp\mathcal{I}(\xi)}_n\cap{^\perp\mathcal{GI}}(\xi)$, it follows that $\mathbb{E}_\xi(C, L)=0$ for any $C\in{^\perp\mathcal{I}(\xi)}_n\cap{^\perp\mathcal{GI}}(\xi)$. There is an $\mathbb{E}_\xi$-triangle $\xymatrix@=2em{L\ar[r]&K\ar[r]&H\ar@{-->}[r]&}$ with $H\in{^\perp\mathcal{I}}(\xi)_n$ and $K\in\mathcal{I}(\xi)_n$ because the cotorsion pair $(^\perp\mathcal{I}(\xi)_n, \mathcal{I}(\xi)_n)$ is complete. Since $K\in\mathcal{I}(\xi)_n\subseteq {^\perp\mathcal{GI}}(\xi)$ by Lemma \ref{lem4.3} and $L\in{^\perp\mathcal{GI}}(\xi)$, it follows that $H\in{^\perp\mathcal{GI}}(\xi)$ because ${^\perp\mathcal{GI}}(\xi)$ is thick by Lemma \ref{lem4.2}. Hence $H\in{^\perp\mathcal{I}(\xi)}_n\cap{^\perp\mathcal{GI}}(\xi)$ and $\mathbb{E}_\xi(H, L)=0$.
Therefore, the $\mathbb{E}_\xi$-triangle $\xymatrix@=2em{L\ar[r]&K\ar[r]&H\ar@{-->}[r]&}$ splits, and $L\in\mathcal{I}(\xi)_n$.
\end{proof}

\begin{cor}\label{cor4.7}Let 
$n$ be a non-negative integer. Then $^\perp\mathcal{GI}(\xi)\cap\mathcal{GI}(\xi)_n=\mathcal{I}(\xi)_n$.
\end{cor}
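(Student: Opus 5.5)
We prove the two inclusions separately, following the pattern of Corollary \ref{cor1}, with Lemma \ref{lem4.2} in the role of the thickness of $\mathcal{W}$ and Theorem \ref{thm3} in the role of Theorem \ref{thm1}. We work throughout in $(\mathcal{B}, \mathbb{E}_\xi, \mathfrak{s}_\xi)$.

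\emph{The inclusion $\mathcal{I}(\xi)_n\subseteq {^\perp\mathcal{GI}}(\xi)\cap\mathcal{GI}(\xi)_n$.} The containment $\mathcal{I}(\xi)_n\subseteq {^\perp\mathcal{GI}}(\xi)$ is exactly Lemma \ref{lem4.3}. For $\mathcal{I}(\xi)_n\subseteq\mathcal{GI}(\xi)_n$, note that $\mathcal{I}(\xi)\subseteq\mathcal{GI}(\xi)$. Hence a finite $\mathcal{I}(\xi)$-coresolution of length at most $n$ is also a $\mathcal{GI}(\xi)$-coresolution of length at most $n$.

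\emph{The inclusion ${^\perp\mathcal{GI}}(\xi)\cap\mathcal{GI}(\xi)_n\subseteq\mathcal{I}(\xi)_n$.} Let $M$ lie in the left-hand side. If $n=0$, then $M\in{^\perp\mathcal{GI}}(\xi)\cap\mathcal{GI}(\xi)=\mathcal{I}(\xi)$ by Lemma \ref{lem4.2}. Assume now $n\geqslant 1$. By Theorem \ref{thm3}, $(1)\Rightarrow(3)$, which needs no completeness hypothesis, there is an $\mathbb{E}_\xi$-triangle
$$\xymatrix@=2em{M\ar[r]&G\ar[r]&K\ar@{-->}[r]&}$$
with $G\in\mathcal{GI}(\xi)$ and $K\in\mathcal{I}(\xi)_{n-1}$. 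By Lemma \ref{lem4.3}, $K\in{^\perp\mathcal{GI}}(\xi)$, and $M\in{^\perp\mathcal{GI}}(\xi)$ by assumption. Since ${^\perp\mathcal{GI}}(\xi)$ is thick (Lemma \ref{lem4.2}), and in particular closed under extensions, we get $G\in{^\perp\mathcal{GI}}(\xi)$.

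Lemma \ref{lem4.2} then gives $G\in{^\perp\mathcal{GI}}(\xi)\cap\mathcal{GI}(\xi)=\mathcal{I}(\xi)$. The triangle therefore exhibits $M$ with $G\in\mathcal{I}(\xi)$ and $K\in\mathcal{I}(\xi)_{n-1}$. Splicing this triangle with a length-$(n-1)$ $\mathcal{I}(\xi)$-coresolution of $K$ gives a coresolution of $M$ of length at most $n$, so $M\in\mathcal{I}(\xi)_n$.

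\emph{Main point to check.} The only step needing care is that Theorem \ref{thm3} $(1)\Rightarrow(3)$ holds unconditionally; its proof uses only the dual of \cite[Proposition 5.5]{HZZ} and the closure property \cite[Theorem 4.16]{HZZ}. Beyond that, the argument is a routine application of thickness.
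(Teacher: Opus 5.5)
Your proof is correct, but it takes a different route from the paper's. The paper uses condition $(2)$ of Theorem~\ref{thm3} rather than $(3)$. It takes an $\mathbb{E}_\xi$-triangle $N\to L\to M$ with $N\in\mathcal{GI}(\xi)$ and $L\in\mathcal{I}(\xi)_n$. This triangle splits because $M\in{^\perp\mathcal{GI}}(\xi)$ gives $\mathbb{E}_\xi(M,N)=0$, so $M$ is a direct summand of $L$ and hence lies in $\mathcal{I}(\xi)_n$. That is a single splitting step and is uniform in $n$, with no separate case $n=0$. However, it needs $\mathcal{I}(\xi)_n$ to be closed under direct summands, which the paper takes from its standing convention on subcategories.

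Your argument is modelled on Corollary~\ref{cor1}. It uses three ingredients: ${^\perp\mathcal{GI}}(\xi)$ is closed under extensions, Lemma~\ref{lem4.3} applies to $K$, and the identity $\mathcal{GI}(\xi)\cap{^\perp\mathcal{GI}}(\xi)=\mathcal{I}(\xi)$ from Lemma~\ref{lem4.2} upgrades $G$ to a $\xi$-injective object. It costs you a separate case $n=0$. In exchange, it exhibits the length-$n$ $\mathcal{I}(\xi)$-coresolution of $M$ directly and never uses summand-closure of $\mathcal{I}(\xi)_n$.

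You are also right that $(1)\Rightarrow(3)$ of Theorem~\ref{thm3} holds unconditionally: the completeness hypotheses enter only in $(4)\Rightarrow(2)$.
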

\begin{proof} It is easy to see that $\mathcal{I}(\xi)_n\subseteq {^\perp\mathcal{GI}(\xi)}\cap\mathcal{GI}(\xi)_n$ by Lemma \ref{lem4.3}. If $M\in{^\perp\mathcal{GI}}(\xi)\cap\mathcal{GI}(\xi)_n$, then there exists an $\mathbb{E}_\xi$-triangle $\xymatrix@=2em{N\ar[r]&L\ar[r]&M\ar@{-->}[r]&}$ with $N\in\mathcal{GI}(\xi)$ and $L\in\mathcal{I}(\xi)_{n}$ by Theorem \ref{thm3}. It is easy to see that the above $\mathbb{E}_\xi$-triangle splits, and then $M\in \mathcal{I}(\xi)_{n}$.
\end{proof}

%

Following \cite{JSS}, a direct system $[(X_\alpha)_{\alpha<\lambda},(i_{\alpha\beta})_{\alpha<\beta<\lambda}]$ in $\mathcal{B}$ indexed by an ordinal number $\lambda$ is called a \emph{$\lambda$-sequence}
if for each limit ordinal $\gamma<\lambda$, the colimit $\mathrm{colim}_{\alpha<\gamma} X_\alpha$ exists and the colimit morphism
$\mathrm{colim}_{\alpha<\gamma} X_\alpha\rightarrow X_\gamma$ is an isomorphism. If a colimit of the whole direct system exists, we call the
morphism $X_0\rightarrow\mathrm{colim}_{\alpha<\lambda} X_\alpha$ the \emph{composition of the $\lambda$-sequence}.

Given an ordinal $\lambda$,
we say that an increasing sequence $(\lambda_\mu)_{\mu<\nu}$ of strictly smaller ordinals indexed by an ordinal $\nu$
is \emph{cofinal} in $\lambda$ if $\lambda= \mathrm{sup}_{\mu<\nu}\lambda_\mu$. The \emph{cofinality} of $\lambda$ is the smallest ordinal $\nu$
for which there is a cofinal sequence $(\lambda_\mu)_{\mu<\nu}$.

Let $\kappa$ be an infinite regular cardinal and $\mathcal{M}$ a class of morphisms in $\mathcal{B}$. We say that $X$ is \emph{$\kappa$-small}
relative to $\mathcal{M}$ if for all ordinals $\lambda$ of cofinality greater than or equal to $\kappa$ and all $\lambda$-sequences
$Y_0\rightarrow Y_1\rightarrow\cdots \rightarrow Y_\alpha\rightarrow Y_{\alpha+1}\rightarrow\cdots$
such that the composition exists in $\mathcal{B}$ and $Y_\alpha\rightarrow Y_{\alpha+1}$ belongs to $\mathcal{M}$ for each $\alpha +1 <\lambda$, the
canonical morphism
$$\mathrm{colim}_{\alpha<\lambda}\mathrm{Hom}_\mathcal{B}(X,Y_\alpha)\rightarrow\mathrm{Hom}_\mathcal{B}(X,\mathrm{colim}_{\alpha<\lambda}Y_\alpha)$$
is an isomorphism. Finally, $X$ is said to be \emph{small} relative to $\mathcal{M}$ if it is $\kappa$-small relative to $\mathcal{M}$ for
some infinite regular cardinal $\kappa$.

 Consider  the following two axioms:

(Ax1) Arbitrary transfinite compositions of $\xi$-inflations exist and are again $\xi$-inflations;

(Ax2) Every object of $\mathcal{B}$ is small relative to the class of all $\xi$-inflations.

\begin{lem}\label{lem4.5} Assume that $(\mathcal{B}, \mathbb{E}_\xi, \mathfrak{s}_\xi)$
 satisfies {\rm (Ax1)}. Then

$(1)$ The category $\mathcal{B}$ has arbitrary coproducts.

$(2)$ Coproducts of $\mathbb{E}_\xi$-triangles are $\mathbb{E}_\xi$-triangles.
\end{lem}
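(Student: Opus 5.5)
The plan is to follow the standard argument from Saorín--Šťovíček, adapted to the extriangulated category $(\mathcal{B}, \mathbb{E}_\xi, \mathfrak{s}_\xi)$.

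\textbf{Part (1).} Let $(X_i)_{i\in I}$ be a family of objects. Well-order $I$ as an ordinal $\lambda$, and build a $\lambda$-sequence $Y_0=0$, $Y_{\alpha+1}=Y_\alpha\oplus X_\alpha$, with $Y_\gamma$ at limit ordinals $\gamma$ defined as the colimit of the preceding terms. Each successor map $Y_\alpha\to Y_\alpha\oplus X_\alpha$ is a split inflation. Split $\mathbb{E}$-triangles lie in $\Delta_0\subseteq\xi$, so it is a $\xi$-inflation. By (Ax1), the colimit at each limit stage exists, and the transfinite composition $0\to \mathrm{colim}_{\alpha<\lambda}Y_\alpha$ is again a $\xi$-inflation. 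We then check by transfinite induction that $Y_\alpha$ is a coproduct of $(X_\beta)_{\beta<\alpha}$. At successor steps this is clear. At limit steps it follows from the universal property of the colimit: morphisms out of $\mathrm{colim}\,Y_\beta$ correspond to compatible families, which in turn correspond to families of morphisms out of the $X_\beta$. Hence $\mathrm{colim}_{\alpha<\lambda} Y_\alpha=\coprod X_i$.

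\textbf{Part (2).} Let $A_i\to B_i\to C_i$ be a family of $\mathbb{E}_\xi$-triangles. Again proceed transfinitely. Using that $\xi$ is closed under finite coproducts (a proper class), set $A_{<\alpha}=\coprod_{\beta<\alpha}A_\beta$, and similarly for $B$ and $C$. We need to show that $A_{<\alpha}\to B_{<\alpha}\to C_{<\alpha}$ realizes an $\mathbb{E}_\xi$-triangle.

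The cleanest route is to write the coproduct of triangles as an extension assembled from a transfinite composition of $\xi$-inflations. Consider the $\lambda$-sequence whose terms are $\coprod_{\beta<\alpha}B_\beta\oplus\coprod_{\beta\ge\alpha}A_\beta$. The step maps are $\mathrm{id}\oplus x_\alpha\oplus\mathrm{id}$, which are $\xi$-inflations since coproducts of triangles with split triangles are in $\xi$. Its transfinite composition is the map $\coprod A_i\to\coprod B_i$, which by (Ax1) is a $\xi$-inflation. Take an $\mathbb{E}_\xi$-triangle $\coprod A_i\to\coprod B_i\to C$. For each $i$, comparing with the $i$-th triangle via the coproduct inclusions produces morphisms $C_i\to C$, hence a map $\coprod C_i\to C$.

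It then remains to show this map is an isomorphism. I would do this with the long exact Hom sequences: apply $\mathcal{B}(-,Z)$ to both the triangle $\coprod A_i\to\coprod B_i\to C$ and the product of the sequences $\mathcal{B}(B_i,Z)\to\mathcal{B}(A_i,Z)$. Since $\mathcal{B}(\coprod B_i,Z)=\prod\mathcal{B}(B_i,Z)$, the cokernel $C$ represents the same functor as $\coprod C_i$. Here one uses that $\coprod B_i\to C$ is a weak cokernel, and an epimorphism-type argument via WIC.

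\textbf{Main obstacle.} I expect the identification $C\cong\coprod C_i$ to be the main obstacle, because deflations in an extriangulated category are only weak cokernels. If the representability argument fails, the fallback is to run the transfinite induction directly on the triangles. At successor steps, add the $\alpha$-th triangle using finite coproduct closure. At limit steps, use (ET4) together with the (Ax1) description of the colimit to compare the $\mathbb{E}_\xi$-triangle obtained from the composed inflation with the colimit of the previous triangles.
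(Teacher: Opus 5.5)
\textbf{The gap is in the step you flag yourself.} Your part (1), and the $\lambda$-sequence $\coprod_{\beta<\alpha}B_\beta\oplus\coprod_{\beta\geq\alpha}A_\beta$ in part (2), are exactly what the paper does. What your argument does not close is the identification of the cone of $\coprod_i x_i$ with $\coprod_i C_i$.

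\textbf{Why the Yoneda argument fails.} The claim that ``the cokernel $C$ represents the same functor as $\coprod_i C_i$'' does not hold in an extriangulated category in general. The long exact Hom sequence of the $\mathbb{E}_\xi$-triangle $\coprod_i A_i\to\coprod_i B_i\xrightarrow{p}C$ begins with $\mathcal{B}(C,Z)\xrightarrow{p^*}\mathcal{B}(\coprod_i B_i,Z)$, and $p^*$ need not be injective.
\begin{itemize}
\item So $C$ and $\coprod_i C_i$ are both only \emph{weak} cokernels of $\coprod_i x_i$. Weak cokernels are not unique up to isomorphism: composing $p$ with an inclusion $C\to C\oplus W$ gives another one.
\item WIC does not help. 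It does not make deflations epimorphisms, and every triangulated category is WIC.
\item The five-lemma argument used for triangulated categories needs a term to the left of $\mathcal{B}(C,Z)$ (there, $\mathrm{Hom}(\coprod_i A_i[1],Z)$). No such term is available here.
\item At best, you can use enough $\xi$-injectives to get $\mathbb{E}_\xi(\coprod_i X_i,Z)\cong\prod_i\mathbb{E}_\xi(X_i,Z)$. Then the five lemma shows that your comparison map $c\colon\coprod_i C_i\to C$ induces isomorphisms on $\mathbb{E}_\xi(-,Z)$. That does not force $c$ to be invertible: a projection $X\oplus P\to X$ with $P$ $\xi$-projective has the same property.
\end{itemize}

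\textbf{Your fallback has the same hole.} (ET4) only handles finitely many composites. At a limit stage, (Ax1) tells you only that the composite is a $\xi$-inflation. It does not say that its cone is the colimit of the earlier cones, which is precisely the statement you need.

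\textbf{Comparison with the paper.} The paper settles this step in one line, ``by the universal property of coproducts and (Ax1)''. That is the argument valid for exact categories, where conflations are kernel--cokernel pairs and cokernels commute with coproducts. So your diagnosis is right. To finish, you need a genuinely additional input at this point rather than the Yoneda argument you sketch. Possible inputs are deflations being cokernels, a triangulated-style argument, or an explicit hypothesis that cones are compatible with transfinite compositions of $\xi$-inflations.
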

\begin{proof} (1) Suppose we are given a family $(A_\alpha)_{\alpha\in\Lambda}$ of objects of $\mathcal{B}$ and assume that $\Lambda=\lambda$ is
an ordinal number. We inductively construct $[(X_\alpha)_{\alpha<\lambda},(i_{\alpha\beta})_{\alpha<\beta<\lambda}]$ such that $X_{\alpha+1} = X_\alpha\oplus A_\alpha$ for
$\alpha +1 <\lambda$  and $i_{\alpha,\alpha+1}: X_\alpha\rightarrowtail X_{\alpha+1}$ is the canonical split inflation. Then the coproduct $\coprod_{\alpha<\lambda}A_\alpha$ is the composition
of the $\lambda$-sequence $[(X_\alpha)_{\alpha<\lambda},(i_{\alpha\beta})_{\alpha<\beta<\lambda}]$ by (Ax1).

(2) Suppose $(\xymatrix@=2em{A_\alpha\ar[r]^{\iota_\alpha}&B_\alpha\ar[r]^{\pi_\alpha}&C_\alpha\ar@{-->}[r]&})_{\alpha\in\Lambda}$ is a family of $\mathbb{E}_\xi$-triangles and assume again that $\Lambda=\lambda$ is an
ordinal number. Put $X_\alpha=(\coprod_{\beta<\alpha}B_\beta)\oplus(\coprod_{\beta\geq\alpha}A_\beta)$ for each $\alpha\leq \lambda$. Then
$X_{\alpha+1}=(\coprod_{\beta<\alpha}B_\beta)\oplus B_\alpha\oplus(\coprod_{\beta\geq\alpha+1}A_\beta)$, such that we have an $\mathbb{E}_\xi$-triangle $$\xymatrix@=2em{X_\alpha\ar[r]^{j_{\alpha,\alpha+1}}&X_{\alpha+1}\ar[r]&C_\alpha\ar@{-->}[r]&.}$$
So
$\coprod_{\alpha<\lambda}A_\alpha\rightarrow\coprod_{\alpha<\lambda}B_\alpha$ is a transfinite composition of the $\lambda$-sequence $[(X_\alpha)_{\alpha<\lambda},(j_{\alpha\beta})_{\alpha<\beta<\lambda}]$ by (Ax1). By the universal property of coproducts and  (Ax1),
$$\xymatrix@=2em{\coprod_{\alpha<\lambda}A_\alpha\ar[r]&\coprod_{\alpha<\lambda}B_\alpha\ar[r]&\coprod_{\alpha<\lambda}C_\alpha\ar@{-->}[r]&}$$ is an $\mathbb{E}_\xi$-triangle.
\end{proof}

\begin{lem}\label{lem4.6}  Assume that $(\mathcal{B}, \mathbb{E}_\xi, \mathfrak{s}_\xi)$  satisfies {\rm (Ax1)} and {\rm (Ax2)} and $\mathcal{F}$ is a class of objects in $\mathcal{B}$.
 Suppose that for a fixed object $A\in\mathcal{F}$, $C\in\mathcal{F}^\perp$ if and only
if $\mathbb{E}_\xi(A,C)=0$. Then for any object $M\in\mathcal{B}$ there exists an $\mathbb{E}_\xi$-triangle
$$\xymatrix@=2em{M\ar[r]&C\ar[r]&D\ar@{-->}[r]&,}$$ such that $C\in\mathcal{F}^\perp$ and $D$ is a transfinite composition of some $\lambda$-sequence $[(D_\alpha)_{\alpha<\lambda},(j_{\alpha\beta})_{\alpha<\beta<\lambda}]$ with an $\mathbb{E}_\xi$-triangle $\xymatrix@=2em{D_\alpha\ar[r]^{j_{\alpha,\alpha+1}}&D_{\alpha+1}\ar[r]&A^{(\Lambda_\alpha)}\ar@{-->}[r]&}$ and a set $\Lambda_\alpha$  for all $\alpha<\lambda$. Moreover, if $\mathbb{E}_\xi(D_0,N)=0=\mathbb{E}_\xi(A^{(\Lambda_\alpha)},N)$ for all $\alpha<\lambda$, then $\mathbb{E}_\xi(D,N)=0$ for any $N\in\mathcal{B}$.
\end{lem}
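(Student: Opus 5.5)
This is the small object argument of Saorín--Šťovíček adapted to $\mathbb{E}_\xi$. We build a $\lambda$-sequence $(C_\alpha)_{\alpha<\lambda}$ with $C_0=M$. The index $\lambda$ is a regular cardinal, chosen via (Ax2) so that $A$ is $\kappa$-small relative to all $\xi$-inflations for some $\kappa\leq\lambda$. At a successor step we set $\Lambda_\alpha=\mathbb{E}_\xi(A,C_\alpha)$. The tautological element $\varepsilon_\alpha\in\mathbb{E}_\xi(A^{(\Lambda_\alpha)},C_\alpha)$, whose component at $e\in\Lambda_\alpha$ is $e$, is available because $\mathbb{E}_\xi(\coprod A,C_\alpha)\cong\prod\mathbb{E}_\xi(A,C_\alpha)$, which follows from Lemma \ref{lem4.5}. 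We realize it as an $\mathbb{E}_\xi$-triangle $C_\alpha\to C_{\alpha+1}\to A^{(\Lambda_\alpha)}\dashrightarrow$. At limit steps we take colimits, which exist by (Ax1). We then put $C=\mathrm{colim}_{\alpha<\lambda}C_\alpha$. By (Ax1) the morphism $M\to C$ is a $\xi$-inflation, so it fits into an $\mathbb{E}_\xi$-triangle $M\to C\to D\dashrightarrow$.

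To show $C\in\mathcal{F}^\perp$, by hypothesis it suffices to prove $\mathbb{E}_\xi(A,C)=0$. Take $e\in\mathbb{E}_\xi(A,C)$, realized by $C\to E\to A\dashrightarrow$. Using enough projectives in $(\mathcal{B},\mathbb{E}_\xi,\mathfrak{s}_\xi)$, choose an $\mathbb{E}_\xi$-triangle $K\to P\to A\dashrightarrow$ with $P$ projective. Then $e$ is the image of some morphism $K\to C$ under the connecting map. By smallness (Ax2) of $K$ relative to $\xi$-inflations, this morphism factors through some $C_\alpha$ with $\alpha<\lambda$. Hence $e$ is the pushforward of some $e'\in\mathbb{E}_\xi(A,C_\alpha)$. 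By construction $e'$ is sent to $0$ in $\mathbb{E}_\xi(A,C_{\alpha+1})$, since $\varepsilon_\alpha$ restricted to the $e'$-component is $e'$, and pushing forward the defining extension along $C_\alpha\to C_{\alpha+1}$ kills it. Therefore $e=0$.

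For $D$, we use the $3\times3$ lemma, i.e.\ (ET4) for the composites $M\to C_\alpha\to C_{\alpha+1}$, to define $D_\alpha$ as the cone of $M\to C_\alpha$. This gives $D_0=0$ (or $M$'s trivial cone), $\mathbb{E}_\xi$-triangles $D_\alpha\to D_{\alpha+1}\to A^{(\Lambda_\alpha)}\dashrightarrow$, compatibility at limits, and $D=\mathrm{colim}\,D_\alpha$. Identifying the colimit requires (Ax1) together with the uniqueness of cones; this is the technical obstacle. The Eklof-type final claim follows by transfinite induction: $\mathbb{E}_\xi(D_{\alpha+1},N)=0$ follows from the long exact sequence. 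At limit ordinals and for $D$ itself, we use that a transfinite composition of $\xi$-inflations with $\mathbb{E}_\xi(\mathrm{cone},N)=0$ has $\mathbb{E}_\xi(\mathrm{colim},N)=0$. To prove this, represent an extension of $\mathrm{colim}\,D_\alpha$ by $N$ and construct compatible splittings stage by stage, using the vanishing at successors and continuity at limits. The main difficulty is exactly this limit step, which I would handle by building, via (Ax1), compatible sections $D_\alpha\to E_\alpha$ of the pulled-back extensions.
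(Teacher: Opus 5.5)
Your overall plan is the paper's: a small-object argument driven by a $\xi$-projective presentation $K\xrightarrow{a'}P\to A$ with class $\eta$, smallness of $K$ relative to $\xi$-inflations, (ET4) to produce the $D_\alpha$, and an Eklof-type argument at the end. There is, however, a concrete gap in your successor step.

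\textbf{The successor step.} You need a class $\varepsilon_\alpha\in\mathbb{E}_\xi(A^{(\Lambda_\alpha)},C_\alpha)$ with $\iota_e^*\varepsilon_\alpha=e$ for every $e\in\Lambda_\alpha$. That is, you need surjectivity of $\mathbb{E}_\xi(A^{(\Lambda)},C_\alpha)\to\prod_\Lambda\mathbb{E}_\xi(A,C_\alpha)$. Lemma \ref{lem4.5} does not give this. It only says that a coproduct of $\mathbb{E}_\xi$-triangles is again a conflation, and in an extriangulated category a conflation does not determine its class. Already in a triangulated category, $X\to 0\to\Sigma X$ realizes every automorphism of $\Sigma X$. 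So nothing controls the restrictions of the class of the coproduct conflation to the summands. Your ``kill'' step uses exactly $\iota_{e'}^*\varepsilon_\alpha=e'$, so the proof of $\mathbb{E}_\xi(A,C)=0$ is incomplete as written.

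The paper avoids classes on coproducts altogether. Its $M_\alpha$ is your $C_\alpha$. It takes $\Lambda_\alpha=\mathcal{B}(K,M_\alpha)$ and cobase-changes the coproduct conflation $K^{(\Lambda_\alpha)}\xrightarrow{a}P^{(\Lambda_\alpha)}\to A^{(\Lambda_\alpha)}$ along the evaluation map $c\colon K^{(\Lambda_\alpha)}\to M_\alpha$. Only the commutative square $d\,a=i_{\alpha,\alpha+1}c$ is used, where $d\colon P^{(\Lambda_\alpha)}\to M_{\alpha+1}$. It says that every $f\colon K\to M_\alpha$ satisfies $i_{\alpha,\alpha+1}f=d\,\iota_f\,a'$, with $\iota_f\colon P\to P^{(\Lambda_\alpha)}$ the $f$-th injection. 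Hence $(i_{\alpha,\alpha+1})_*f_*\eta=(d\iota_f)_*a'_*\eta=0$. With this successor step your verification paragraph goes through verbatim; it is the paper's argument that $\mathcal{B}(P,C)\to\mathcal{B}(K,C)$ is surjective. Note also that $\lambda$ must be chosen from the smallness of $K$, not of $A$, since it is morphisms out of $K$ that you factor through some $C_\alpha$.

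\textbf{The object $D$.} Here you follow the paper. The identification you flag cannot come from uniqueness of cones: cones are unique only up to non-canonical isomorphism, so they yield no compatible system. The paper simply obtains the $\mathbb{E}_\xi$-triangle $M\to\mathrm{colim}\,M_\alpha\to\mathrm{colim}\,D_\alpha$ ``by (Ax1)''. That is, it reads (Ax1) as saying that colimits of such $\lambda$-sequences of $\mathbb{E}_\xi$-triangles are $\mathbb{E}_\xi$-triangles, exactly as in the proof of Lemma \ref{lem4.5}(2). You should state this as an explicit input.

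\textbf{The Eklof step.} Your compatible-sections idea is right, and sounder than the paper's argument. The paper forms $\mathrm{colim}\,\Phi_\alpha$ from independently chosen retractions of pulled-back triangles; these need not be compatible, and $E=\mathrm{colim}\,E_\alpha$ is not justified. But take sections into $E$ itself rather than into pullbacks. Given $N\xrightarrow{g}E\xrightarrow{h}D$ with class $\delta$ and colimit maps $\mu_\alpha\colon D_\alpha\to D$, build $s_\alpha\colon D_\alpha\to E$ with $hs_\alpha=\mu_\alpha$ and $s_{\alpha+1}j_{\alpha,\alpha+1}=s_\alpha$:
\begin{enumerate}
\item[(a)] $s_0$ exists since $\mathbb{E}_\xi(D_0,N)=0$.
\item[(b)] At a successor, $\mu_{\alpha+1}^*\delta=0$. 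Indeed $\mathbb{E}_\xi(D_{\alpha+1},N)\to\mathbb{E}_\xi(D_\alpha,N)$ is injective because $\mathbb{E}_\xi(A^{(\Lambda_\alpha)},N)=0$, and it sends $\mu_{\alpha+1}^*\delta$ to $s_\alpha^*h^*\delta=0$. Pick a lift $t$. Since $h(t\,j_{\alpha,\alpha+1}-s_\alpha)=0$, write $t\,j_{\alpha,\alpha+1}-s_\alpha=gu$. Extend $u$ to $u'\colon D_{\alpha+1}\to N$, again using $\mathbb{E}_\xi(A^{(\Lambda_\alpha)},N)=0$, and set $s_{\alpha+1}=t-gu'$.
\item[(c)] At limits, use the colimit property.
\end{enumerate}
The resulting $s\colon D\to E$ satisfies $hs=1_D$, so $\delta=0$.
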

\begin{proof} Let $\xymatrix@=2em{K\ar[r]^{a'}&P\ar[r]^{b'}&A\ar@{-->}[r]&}$ be an $\mathbb{E}_\xi$-triangle with
$P\in\mathcal{P}(\xi)$. Let $M\in\mathcal{B}$ and $c:K^{(\mathcal{B}(K,M))}\rightarrow M$ be the
evaluation map. Set $a=a'^{(\mathcal{B}(K,M))}$ and $b=b'^{(\mathcal{B}(K,M))}$. Hence $a$ is a $\xi$-inflation by Lemma \ref{lem4.5}(2), and $\left[\begin{smallmatrix}a \\ -c\end{smallmatrix}\right]$ is a $\xi$-inflation by \cite[Proposition 1.20]{LN}. Extending $\left[\begin{smallmatrix}a \\ -c\end{smallmatrix}\right]$ to an $\mathbb{E}_\xi$-triangle $\xymatrix@=2em{K^{(\mathcal{B}(K,M))}\ar[r]^{\left[\begin{smallmatrix}a \\ -c\end{smallmatrix}\right]\ \ \ }&P^{(\mathcal{B}(K,M))}\oplus M\ar[r]^{\ \ \ \ \ \ \ \left[\begin{smallmatrix}d & i_{0, 1}\end{smallmatrix}\right]}&M_1\ar@{-->}[r]&.}$
 By the proof of \cite[Proposition 1.20]{LN}, we have a commutative diagram with $\mathbb{E}_\xi$-triangles
$$\xymatrix{&K^{(\mathcal{B}(K,M))}\ar[d]^{\left[\begin{smallmatrix}a \\ -c\end{smallmatrix}\right]}\ar@{=}[r]&K^{(\mathcal{B}(K,M))}\ar[d]^a&\\
M\ar[r]^{\left[\begin{smallmatrix}0 \\ 1\end{smallmatrix}\right]\ \ \ \ \ \ \ \ }\ar@{=}[d]&P^{(\mathcal{B}(K,M))}\oplus M\ar[r]^{\ \ \ \left[\begin{smallmatrix}1 & 0\end{smallmatrix}\right]}\ar[d]^{\left[\begin{smallmatrix}d & i_{0,1}\end{smallmatrix}\right]}&P^{(\mathcal{B}(K,M))}\ar@{-->}[r]\ar[d]^b&\\
M\ar[r]^{i_{0,1}}&M_{1}\ar@{-->}[d]\ar[r]^{f_1\ \ }&A^{(\mathcal{B}(K,M))}\ar@{-->}[d]\ar@{-->}[r]&\\
&&&}$$ it yields the following commutative diagram with $\mathbb{E}_\xi$-triangles
$$\xymatrix{K^{(\mathcal{B}(K,M))}\ar[r]^a\ar[d]^c&P^{(\mathcal{B}(K,M))}\ar[r]^b\ar[d]^d&A^{(\mathcal{B}(K,M))}\ar@{-->}[r]\ar@{=}[d]&\\
M\ar[r]^{i_{0,1}}&M_1\ar[r]^{f_1\ \ }&A^{(\mathcal{B}(K,M))}\ar@{-->}[r]&.}$$
Similarly, one can find an $\mathbb{E}_\xi$-triangle $\xymatrix@=2em{M_1\ar[r]^{i_{1,2}}&M_2\ar[r]^{f_2\ \ \ \ \ }&A^{(\mathcal{B}(K,M_1))}\ar@{-->}[r]&,}$ such that
any morphism $K\rightarrow M_1$ can be extended to a morphism $P\rightarrow M_2$. Repeating this procedure and letting $M_\beta=\mathrm{colim}_{\alpha<\beta}M_\alpha$
whenever $\beta$ is a limit ordinal, where $M_0 = M$. For any ordinal $\lambda$, we can construct a $\lambda$-sequence $[(M_\alpha)_{\alpha<\lambda},(i_{\alpha\beta})_{\alpha<\beta<\lambda}]$ and  a commutative diagram with $\mathbb{E}_\xi$-triangles$$\xymatrix{M\ar[r]^{i_{0,\alpha}}\ar@{=}[d]&M_\alpha\ar[r]\ar[d]^{i_{\alpha,\alpha+1}}&D_\alpha\ar@{-->}[r]\ar[d]^{j_{\alpha,\alpha+1}}&\\
M\ar[r]^{i_{0,\alpha+1}\ \ }&M_{\alpha+1}\ar[r]\ar[d]^{f_{\alpha+1}}&D_{\alpha+1}\ar@{-->}[r]\ar[d]&\\
&A^{(\mathcal{B}(K,M_\alpha))}\ar@{=}[r]\ar@{-->}[d]&A^{(\mathcal{B}(K,M_\alpha))}\ar@{-->}[d]&\\
&&&}$$ for all $\alpha+1<\lambda$. As $K$ is $\kappa$-small relative to $\xi$-inflations for
some infinite regular cardinal $\kappa$ by (Ax2), choose
the least ordinal number $\lambda$ such that $|\lambda|>\kappa$ and set $C=\mathrm{colim}_{\alpha<\lambda}M_\alpha$, $D=\mathrm{colim}_{\alpha<\lambda}D_\alpha$, it yields an $\mathbb{E}_\xi$-triangle
$\xymatrix@=2em{M\ar[r]&C\ar[r]&D\ar@{-->}[r]&}$ by (Ax1). Also any $K\rightarrow C$ factors through $K\rightarrow M_\beta\rightarrow C$ for some $\beta<\lambda$, and
$K\rightarrow M_\beta$ extends to $P\rightarrow M_{\beta+1}$, it means that the sequence  $\mathcal{B}(P, C)\longrightarrow\mathcal{B}(K, C)\longrightarrow 0$ of abelian groups is exact. Applying $\mathcal{B}(-, C)$ to the $\mathbb{E}_\xi$-triangle $\xymatrix@=2em{K\ar[r]&P\ar[r]&A\ar@{-->}[r]&,}$  which implies
 that $\mathbb{E}_\xi(A, C)=0$ as  $\mathbb{E}_\xi(P, C)=0$, then $C\in\mathcal{F}^\perp$ by hypothesis.
Let $N\in\mathcal{B}$ be such that
$\mathbb{E}_\xi(D_0,N)=0=\mathbb{E}_\xi(A^{(\mathcal{B}(K,M_\alpha))},N)$ for $\alpha<\lambda$. It is easy to check that $\mathbb{E}_\xi(D_{\alpha+1},N)=0$ for all $\alpha<\lambda$ by applying $\mathcal{B}(-, N)$ to the $\mathbb{E}_\xi$-triangle $\xymatrix{D_\alpha\ar[r]&D_{\alpha+1}\ar[r]&A^{(\mathcal{B}(K, M_\alpha))}\ar@{-->}[r]&}$ for $\alpha+1<\lambda$.
Consider the
$\mathbb{E}_\xi$-triangle $\xymatrix@=2em{N\ar[r]&E\ar[r]&D\ar@{-->}[r]&.}$ For $\alpha<\lambda$,
one has a commutative diagram with $\mathbb{E}_\xi$-triangles$$\xymatrix{N\ar[r]^{g_\alpha}\ar@{=}[d]&E_\alpha\ar[r]^{h_\alpha}\ar[d]&D_\alpha\ar@{-->}[r]\ar[d]&\\
N\ar[r]^g&E\ar[r]^h&D\ar@{-->}[r]&.}$$
As the upper row is split, there is $\Phi_\alpha:E_\alpha\rightarrow N$ such that $\Phi_\alpha g_\alpha=\mathrm{id}_N$,
it follows from the universal property of colimits that there is $\Phi=\mathrm{colim}_{\alpha<\lambda}\Phi_\alpha:E\rightarrow N$ such that $\Phi g=\mathrm{id}_N$. So the lower row is split, and hence $\mathbb{E}_\xi(D,N)=0$.
\end{proof}

Let $\mathcal{X}$ and $\mathcal{Y}$ be subcategories of $(\mathcal{B}, \mathbb{E}_\xi, \mathfrak{s}_\xi)$. Set
$$\mathrm{Cone}(\mathcal{X},\mathcal{Y}):=\{C\in\mathcal{B}\hspace{0.03cm}|\hspace{0.03cm}\exists\ \mathbb{E}_\xi\textrm{-triangle}\ \xymatrix@=2em{X\ar[r]&Y\ar[r]&C\ar@{-->}[r]&}\ \textrm{with}\ X\in\mathcal{X}\ \textrm{and}\ Y\in\mathcal{Y}\},$$
$$\mathrm{CoCone}(\mathcal{X},\mathcal{Y}):=\{D\in\mathcal{B}\hspace{0.03cm}|\hspace{0.03cm}\exists\ \mathbb{E}_\xi\textrm{-triangle}\ \xymatrix@=2em{D\ar[r]&X\ar[r]&Y\ar@{-->}[r]&}\ \textrm{with}\ X\in\mathcal{X}\ \textrm{and}\ Y\in\mathcal{Y}\},$$
and let $\Sigma\mathcal{X}=\mathrm{Cone}(\mathcal{X},\mathcal{I}(\xi))$ and $\Sigma^n\mathcal{X}=\mathrm{Cone}(\Sigma^{n-1}\mathcal{X},\mathcal{I}(\xi))$, $\Omega\mathcal{X}=\mathrm{CoCone}(\mathcal{P}(\xi),\mathcal{X})$ and $\Omega^n\mathcal{X}=\mathrm{Cone}(\mathcal{P}(\xi),\Omega^{n-1}\mathcal{X})$ for $n\geq2$.

\begin{prop}\label{lem4.7}  Assume that $(\mathcal{B}, \mathbb{E}_\xi, \mathfrak{s}_\xi)$  satisfies {\rm (Ax1)} and {\rm (Ax2)}.

$(1)$ If the pair $({^\bot}\mathcal{I}(\xi),\mathcal{I}(\xi))$ is cogenerated by a set $\mathcal{S}$, then the pair $({^\bot}\mathcal{I}(\xi)_n,\mathcal{I}(\xi)_n)$ is cogenerated by the set $\Omega^n\mathcal{S}$. Moreover, $({^\bot}\mathcal{I}(\xi)_n,\mathcal{I}(\xi)_n)$ is a complete cotorsion pair.

$(2)$ If the cotorsion pair $({^\bot}\mathcal{GI}(\xi),\mathcal{GI}(\xi))$ is cogenerated by a set $\mathcal{S}'$, then it is complete.
\end{prop}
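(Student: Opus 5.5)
The proof splits into the statement about cogeneration by $\Omega^n\mathcal{S}$ and the completeness statements, which all come out of Lemma \ref{lem4.6}. \emph{Cogenerated by a set $\mathcal{T}$} means $\mathcal{Y}=\mathcal{T}^\perp$ in $(\mathcal{B},\mathbb{E}_\xi,\mathfrak{s}_\xi)$. Throughout I use dimension shifting via Lemma \ref{lem2.11}, which is available because $(\mathcal{B},\mathbb{E}_\xi,\mathfrak{s}_\xi)$ has enough projectives and injectives.

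\textbf{Cogeneration.} We have $\mathcal{I}(\xi)=\mathcal{S}^\perp$, and each $T\in\Omega^n\mathcal{S}$ comes with $\mathbb{E}_\xi$-triangles $T_{k}\to P_{k-1}\to T_{k-1}$ with $P_{k-1}\in\mathcal{P}(\xi)$, $T_0=S\in\mathcal{S}$, $T_n=T$. These give
$$\mathbb{E}_\xi(T,M)\cong\mathbb{E}_\xi^{n+1}(S,M).$$
So it suffices to show: $M\in\mathcal{I}(\xi)_n$ iff $\mathbb{E}^{n+1}_\xi(S,M)=0$ for all $S\in\mathcal{S}$.

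For this, take an $\mathcal{I}(\xi)$-coresolution $M\to I_0\to\cdots\to I_{n-1}\to Z$, which exists since there are enough injectives. Then $\mathbb{E}^{n+1}_\xi(S,M)\cong\mathbb{E}_\xi(S,Z)$. By Lemma \ref{lem2.19}, applied to the complete hereditary pair $({}^\perp\mathcal{I}(\xi),\mathcal{I}(\xi))=(\mathcal{B},\mathcal{I}(\xi))$, we have $M\in\mathcal{I}(\xi)_n$ iff $Z\in\mathcal{I}(\xi)$, i.e.\ iff $Z\in\mathcal{S}^\perp$. One subtlety: $\Omega^n\mathcal{S}$ depends on choices of syzygies, so fix one for each $S$. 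Different choices differ by projective summands (Schanuel), which does not affect the perp. This gives $(\Omega^n\mathcal{S})^\perp=\mathcal{I}(\xi)_n$, hence ${}^\perp\mathcal{I}(\xi)_n={}^\perp((\Omega^n\mathcal{S})^\perp)$, and the pair is a cotorsion pair.

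\textbf{Completeness in (1).} Set $A=\bigoplus_{T\in\Omega^n\mathcal{S}}T$, which exists by Lemma \ref{lem4.5}. Then $\mathbb{E}_\xi(A,C)=0$ iff $C\in(\Omega^n\mathcal{S})^\perp$, using that $\mathbb{E}_\xi$ of a coproduct is a product; this needs a short check from Lemma \ref{lem4.5}(2). Lemma \ref{lem4.6} with $\mathcal{F}=\Omega^n\mathcal{S}\cup\{A\}$ gives, for every $M$, a triangle $M\to C\to D$ with $C\in\mathcal{I}(\xi)_n$. Here $D$ is a transfinite extension of $D_0=0$ by coproducts $A^{(\Lambda_\alpha)}$. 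Since $\mathbb{E}_\xi(A^{(\Lambda_\alpha)},N)=0$ for all $N\in\mathcal{I}(\xi)_n$, the ``moreover'' part of Lemma \ref{lem4.6} gives $D\in{}^\perp\mathcal{I}(\xi)_n$. So $\mathcal{I}(\xi)_n$ is special preenveloping, and Lemma \ref{lemma2.18} $(3)\Rightarrow(1)$ yields completeness.

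\textbf{Part (2)} is the same argument with $A=\bigoplus_{S'\in\mathcal{S}'}S'$ and $\mathcal{F}=\mathcal{S}'\cup\{A\}$, followed by Lemma \ref{lemma2.18}.

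The main obstacle is checking the Lemma \ref{lem4.6} ingredients carefully. First, $\mathbb{E}_\xi(\coprod X_i,N)\cong\prod\mathbb{E}_\xi(X_i,N)$, which should follow by splitting arguments from Lemma \ref{lem4.5}(2). Second, the starting term is $D_0=0$ (as $M_0=M$), which is needed so that the transfinite argument puts $D$ in the left class.
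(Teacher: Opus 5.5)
Your proposal is correct and follows essentially the paper's argument. Cogeneration comes from dimension shifting, which gives $(\Omega^n\mathcal{S})^\perp=\mathcal{I}(\xi)_n$. Completeness comes from Lemma \ref{lem4.6} with $A=\coprod\Omega^nS$ and $D_0=0$, which gives $D\in{}^\perp\mathcal{I}(\xi)_n$, followed by Lemma \ref{lemma2.18}.

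The only differences are minor:
\begin{itemize}
\item You apply condition (3) of Lemma \ref{lemma2.18} directly to $M\to C\to D$. The paper instead first passes through a projective presentation $M\to P\to L$ to build a special precover of $L$, and then uses condition (2).
\item You make explicit the check that $\mathbb{E}_\xi(\coprod X_i,N)=0$ if and only if $\mathbb{E}_\xi(X_i,N)=0$ for all $i$, which the paper leaves implicit. This check needs only the universal property of the coproduct and the long exact sequence, not Lemma \ref{lem4.5}(2).
\end{itemize}
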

\begin{proof} (1) Let $X\in\mathcal{I}(\xi)_n$. Then $\mathbb{E}_\xi(\Omega^nS,X)\cong\mathbb{E}^{n+1}_\xi(S,X)=0$ for all $S\in\mathcal{S}$. If $\mathbb{E}_\xi(\Omega^nS,X)=0$ for all $S\in\mathcal{S}$, then $\mathbb{E}_\xi(S,\Sigma^nX)\cong\mathbb{E}^{n+1}_\xi(S,X)\cong\mathbb{E}_\xi(\Omega^nS,X)=0$ for all $S\in\mathcal{S}$, it implies that $\Sigma^nX\in\mathcal{I}(\xi)$ and $X\in\mathcal{I}(\xi)_n$. Hence we have $\mathcal{I}(\xi)_n=(\Omega^n\mathcal{S})^\perp$, which implies that $(^\perp\mathcal{I}(\xi)_n, \mathcal{I}(\xi)_n)$ is a cotorsion pair. For any $L\in\mathcal{B}$, there exists an $\mathbb{E}_\xi$-triangle $\xymatrix{M\ar[r]&P\ar[r]&L\ar@{-->}[r]&}$ with $P\in\mathcal{P}(\xi)$ since $\mathcal{B}$ has enough $\xi$-projectives. Set $A=\coprod_{S\in\mathcal{S}}\Omega^nS$. By Lemma \ref{lem4.6}, there is an $\mathbb{E}_\xi$-triangle
$\xymatrix@=2em{M\ar[r]&C\ar[r]&D\ar@{-->}[r]&}$ such that $C\in\mathcal{I}(\xi)_n$ and $D$ is a transfinite composition of some $\lambda$-sequence $[(D_\alpha)_{\alpha<\lambda},(j_{\alpha\beta})_{\alpha<\beta<\lambda}]$ with an $\mathbb{E}_\xi$-triangle $\xymatrix@=2em{D_\alpha\ar[r]^{j_{\alpha,\alpha+1}}&D_{\alpha+1}\ar[r]&A^{(\Lambda_\alpha)}\ar@{-->}[r]&}$ for all $\alpha<\lambda$. For any $X\in\mathcal{I}(\xi)_n$ and $\alpha<\lambda$,
as
$\mathbb{E}_\xi(D_0,X)=0$ and $\mathbb{E}_\xi(A^{(\Lambda_\alpha)},X)=0$, it follows from Lemma \ref{lem4.6} that $\mathbb{E}_\xi(D,X)=0$, so $D\in{^\bot}\mathcal{I}(\xi)_n$.
It follows from the dual version of Lemma \ref{lem1} that there is a commutative diagram with $\mathbb{E}_\xi$-triangles
 $$\xymatrix{M\ar[r]\ar[d]&P\ar[r]\ar[d]&L\ar@{-->}[r]\ar@{=}[d]&\\
C\ar[d]\ar[r]&E\ar[r]\ar[d]&L\ar@{-->}[r]&\\
D\ar@{=}[r]\ar@{-->}[d]&D\ar@{-->}[d]&&\\
&&&}$$
with $P\in\mathcal{P}(\xi)$, $C\in\mathcal{I}(\xi)_n$ and $D\in{^\bot}\mathcal{I}(\xi)_n$. Then $E\in{^\bot}\mathcal{I}(\xi)_n$ as $P, D\in {^\bot}\mathcal{I}(\xi)_n$, so there is an $\mathbb{E}_\xi$-triangle $\xymatrix{C\ar[r]&E\ar[r]&L\ar@{-->}[r]&}$ with $E\in{^\bot}\mathcal{I}(\xi)_n$ and $C\in\mathcal{I}(\xi)_n$. Then $({^\bot}\mathcal{I}(\xi)_n,\mathcal{I}(\xi)_n)$ is a complete cotorsion pair by Lemma \ref{lemma2.18}.

(2) By Lemma \ref{lem4.6}, there is an $\mathbb{E}_\xi$-triangle
$\xymatrix@=2em{M\ar[r]&C\ar[r]&D\ar@{-->}[r]&}$ such that $C\in\mathcal{GI}(\xi)$ and $D\in{^\perp}\mathcal{GI}(\xi)$.
Thus the cotorsion pair $({^\bot}\mathcal{GI}(\xi),\mathcal{GI}(\xi))$ is complete by Lemma \ref{lemma2.18}.
\end{proof}

We are ready to prove Theorem \ref{thm4} in the introduction.

{\bf Proof of Theorem \ref{thm4}.} Set $(\mathcal{C}, \mathcal{W},\mathcal{F})=(\mathcal{B}, {^\perp\mathcal{GI}}(\xi),\mathcal{GI}(\xi))$. Then $^\perp\mathcal{GI}(\xi)$ is thick in $(\mathcal{B}, \mathbb{E}_\xi, \mathfrak{s}_\xi)$ and
$\mathcal{GI}(\xi)\cap{^\perp\mathcal{GI}}(\xi)=\mathcal{I}(\xi)$ by Lemma \ref{lem4.2}. Hence the complete cotorsion pair $(\mathcal{C}\cap\mathcal{W}, \mathcal{F})=(^\perp\mathcal{GI}(\xi), \mathcal{GI}(\xi))$ is hereditary by Lemma \ref{lem2.18}. It is clear that $(\mathcal{C}, \mathcal{W}\cap\mathcal{F})=(\mathcal{B}, \mathcal{I}(\xi))$ is a complete and hereditary cotorsion pair. Hence $(\mathcal{C}, \mathcal{W},\mathcal{F})=(\mathcal{B}, {^\perp\mathcal{GI}}(\xi),\mathcal{GI}(\xi))$ is a hereditary Hovey triple in $\mathcal{B}$. By Proposition \ref{lem4.7}(1), we obtain that $(^\perp\mathcal{I}(\xi)_n, \mathcal{I}(\xi)_n)$ is a complete cotorsion pair in $(\mathcal{B}, \mathbb{E}_\xi, \mathfrak{s}_\xi)$. Thus the statements follow from Theorem \ref{thm2} and Corollary \ref{cor4.7} as $(^\perp(\mathcal{W}\cap\mathcal{F}), (\mathcal{W}\cap\mathcal{F}))=(^\perp\mathcal{I}(\xi)_n, \mathcal{I}(\xi)_n)$.\hfill$\Box$
\vspace{2mm}

Note that $(^{\perp}\mathcal{GI}(R),\mathcal{GI}(R))$ is a complete cotorsion pair for any ring $R$ by \cite{SS}. A direct consequence of Theorem \ref{thm4} yields the following result.

\begin{cor}\label{cor:4.11}{\rm  (\cite[Corollary 3.9]{El Maaouy2})} Let $R$ be a ring, and $n$ any non-negative integer.

$(1)$ The pair $(^\perp\mathcal{I}(R)\cap{^\perp\mathcal{GI}}(R), \mathcal{GI}(R)_n)$ is a complete hereditary cotorsion pair in $\mathrm{Mod}R$ with kernel $\mathcal{I}(R)_n\cap{^\perp\mathcal{I}}(R)_n$.

$(2)$ The triple $(^\perp\mathcal{I}(R)_n, {^\perp}\mathcal{GI}(R), \mathcal{GI}(R)_n)$ is a hereditary Hovey triple in $\mathrm{Mod}R$; $^{\perp}\mathcal{I}(R)_n\cap\mathcal{GI}(R)_n$ is a Frobenius category such that $^{\perp}\mathcal{I}(R)_n\cap\mathcal{I}(R)_n$ is its class of projective-injective objects; and the homotopy category is the stable category $\underline{^{\perp}\mathcal{I}(R)_n\cap\mathcal{GI}(R)_n}$, which is triangle equivalent to $\underline{\mathcal{GI}(R)}$.
\end{cor}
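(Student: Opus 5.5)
Corollary \ref{cor:4.11} is the special case of Theorem \ref{thm4} in which $\mathcal{B}=\mathrm{Mod}R$ with its abelian exact structure and $\xi$ is the class of all short exact sequences. I would therefore check the hypotheses of Theorem \ref{thm4} in this setting and then translate its conclusions. With this choice $\mathbb{E}_\xi=\mathrm{Ext}^1_R$. The $\xi$-injective and $\xi$-projective objects are the injective and projective modules, so $\mathcal{I}(\xi)=\mathcal{I}(R)$. The $\xi$-$\mathcal{G}$injective objects are the Gorenstein injective modules, so $\mathcal{GI}(\xi)=\mathcal{GI}(R)$. $\mathrm{Mod}R$ is abelian, hence weakly idempotent complete (Remark \ref{Re:4.12}), and it has enough projectives and injectives. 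Hence the standing assumption of Section \ref{section4} holds.

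Next I check (Ax1) and (Ax2). Arbitrary transfinite compositions of monomorphisms exist in $\mathrm{Mod}R$, since direct limits exist. They are again monomorphisms because direct limits are exact in a Grothendieck category. This gives (Ax1). Every module is $\kappa$-small relative to all monomorphisms for a regular cardinal $\kappa$ exceeding its presentation size (the standard fact from Hovey's book), which gives (Ax2).

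The remaining hypotheses of Theorem \ref{thm4} concern two cotorsion pairs. First, $(^\perp\mathcal{GI}(R),\mathcal{GI}(R))$ is complete by \cite{SS}. Second, $({^\perp}\mathcal{I}(R),\mathcal{I}(R))=(\mathrm{Mod}R,\mathcal{I}(R))$ must be cogenerated by a set. By Baer's criterion I would take $\mathcal{S}=\{R/\mathfrak{a}\mid \mathfrak{a}\ \text{a left ideal}\}$, for which $\mathcal{S}^\perp=\mathcal{I}(R)$. Theorem \ref{thm4}(1) then gives the complete hereditary cotorsion pair with core $\mathcal{I}(R)_n\cap{^\perp}\mathcal{I}(R)_n$. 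Here the left class reads $^\perp\mathcal{I}(R)_n\cap{^\perp\mathcal{GI}}(R)$; the displayed $^\perp\mathcal{I}(R)$ in the corollary is a typo for $^\perp\mathcal{I}(R)_n$.

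Theorem \ref{thm4}(2) gives the Hovey triple, the Frobenius structure and the triangle equivalence with $\underline{\mathcal{GI}(R)}$. The only point needing care is that the Frobenius extriangulated structure induced on the extension-closed subcategory of an abelian category is exact. This holds because the conflations are exactly the short exact sequences with terms in the subcategory, so the result is a Frobenius category in the usual sense. The step I expect to be the main obstacle is the set-theoretic verification of (Ax2) together with the cogeneration by a set; both rest on standard facts rather than results in this paper.
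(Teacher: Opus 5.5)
Your proposal is correct and follows the paper's own route. The paper gives only a one-line justification: apply Theorem \ref{thm4} to $\mathcal{B}=\mathrm{Mod}R$ with $\xi$ the class of all short exact sequences, using \cite{SS} for the completeness of $({}^\perp\mathcal{GI}(R),\mathcal{GI}(R))$. You additionally check (Ax1), (Ax2), set-cogeneration of $(\mathrm{Mod}R,\mathcal{I}(R))$ via Baer's criterion, and the exact-versus-extriangulated Frobenius point, all of which the paper leaves implicit. Your reading of the left class in (1) as ${}^\perp\mathcal{I}(R)_n\cap{}^\perp\mathcal{GI}(R)$ is the right one.
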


Let $R$ be a ring. We denote by $\mathrm{C}(R)$ the category whose class of objects consists of all
cochain complexes $X$ of $R$-modules,
$$\cdots\longrightarrow X^{s-1}\stackrel{d^{s-1}_X}\longrightarrow X^s\stackrel{d^{s}_X}\longrightarrow X^{s+1}\longrightarrow\cdots,$$such that $d^{s}_Xd^{s-1}_X=0$ for all $s\in\mathbb{Z}$.
Denotes by $\mathrm{D}(R)$ the derived category of cochain complexes of $R$-modules. For $s\in\mathbb{Z}$, denote $$\mathrm{Z}^s(X)=\mathrm{ker}d_X^{s},\ \mathrm{B}^{s}(X)=\mathrm{im}d_X^{s-1},\ \mathrm{C}^{s}(X)=\mathrm{coker}d_X^{s-1},\ \mathrm{H}^s(X)=\mathrm{Z}^s(X)/\mathrm{B}^s(X).$$
Following \cite{O}, an exact triangle $X\longrightarrow Y\longrightarrow Z\longrightarrow X[1]$ in $\mathrm{D}(R)$ is called a \emph{cohomologically ghost triangle} if $0\longrightarrow\mathrm{H}^s(X)\longrightarrow\mathrm{H}^s(Y)\longrightarrow\mathrm{H}^s(Z)\longrightarrow0$ is exact for all $s\in\mathbb{Z}$. Given a class $\mathcal{X}$ of $R$-modules, a complex $X$ is called a \emph{CE $\mathcal{X}$
complex} if $X^s$, $\mathrm{Z}^s(X)$, $\mathrm{B}^s(X)$ and $\mathrm{H}^s(X)$ are all in $\mathcal{X}$ for all $s\in\mathbb{Z}$.
In particular, if $\mathcal{X}$ is the class $\mathcal{P}(R)$ (resp.  $\mathcal{I}(R)$, $\mathcal{GI}(R)$) of
projective (resp., injective, Gorenstein injective) $R$-modules, then a CE $\mathcal{X}$ complex is just a CE projective $\mathrm{CE}(\mathcal{P}(R))$ complex (resp., CE injective $\mathrm{CE}(\mathcal{I}(R))$ complex, CE Gorenstein injective $\mathrm{CE}(\mathcal{GI}(R))$ complex), see \cite{E,YL}. By \cite[Proposition 6.3]{E}, the functor $\mathrm{Hom}(-,-)$ on
$\mathrm{C}(R)\times\mathrm{C}(R)$ is right balanced by $\mathrm{CE}(\mathcal{P}(R))\times\mathrm{CE}(\mathcal{I}(R))$. So one can compute derived functors of $\mathrm{Hom}(-,-)$ using either of the two resolutions, and denote these functors by $\overline{\mathrm{Ext}}^n(-,-)$.

\begin{cor}\label{cor:4.12} Let $R$ be a ring and $\mathcal{B}=\mathrm{D}(R)$ with $\mathbb{E}_\xi$ the class of cohomological ghost triangles in $\mathrm{D}(R)$.

$(1)$ The pair $(^\perp\mathcal{I}(\xi)\cap{^\perp\mathcal{GI}}(\xi), \mathcal{GI}(\xi)_n)$ is a complete hereditary cotorsion pair in $(\mathcal{B},\mathbb{E}_\xi,\mathfrak{s}_\xi)$ with core $\mathcal{I}(\xi)_n\cap{^\perp\mathcal{I}}(\xi)_n$.

$(2)$ The triple $(^\perp\mathcal{I}(\xi)_n, ^\perp\mathcal{GI}(\xi), \mathcal{GI}(\xi)_n)$ is a hereditary Hovey triple in $(\mathcal{B},\mathbb{E}_\xi,\mathfrak{s}_\xi)$; $^{\perp}\mathcal{I}(\xi)_n\cap\mathcal{GI}(\xi)_n$ is a Frobenius extriangulated category such that $^{\perp}\mathcal{I}(\xi)_n\cap\mathcal{I}(\xi)_n$ is its class of projective-injective objects; and the homotopy category is the stable category $\underline{^{\perp}\mathcal{I}(\xi)_n\cap\mathcal{GI}(\xi)_n}$, which is triangle equivalent to $\underline{\mathcal{GI}(\xi)}$.
\end{cor}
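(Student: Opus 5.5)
The plan is to obtain Corollary \ref{cor:4.12} as a special case of Theorem \ref{thm4}. Take $\mathcal{B}=\mathrm{D}(R)$ and let $\xi$ be the class of cohomological ghost triangles. I would then check the hypotheses of Theorem \ref{thm4} one at a time.

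\textbf{Step 1: structural hypotheses.} First, $\xi$ is a proper class of triangles in the triangulated category $\mathrm{D}(R)$; this is due to Otake \cite{O}. Next, $\mathrm{D}(R)$ is WIC by Remark \ref{Re:4.12}(2), so $(\mathcal{B},\mathbb{E}_\xi,\mathfrak{s}_\xi)$ is extriangulated and WIC. It also has enough $\xi$-projectives and $\xi$-injectives. The key tool here is the functor $\mathrm{H}^\ast\colon \mathrm{D}(R)\to$ graded $R$-modules. A triangle lies in $\xi$ exactly when $\mathrm{H}^\ast$ sends it to a short exact sequence. The $\xi$-injectives should be, up to isomorphism, the complexes with zero differential and injective components, i.e.\ the images in $\mathrm{D}(R)$ of CE injective complexes. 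We have $\mathrm{Hom}_{\mathrm{D}(R)}(X,I)\cong \mathrm{Hom}_{\mathrm{gr}}(\mathrm{H}^\ast X,\mathrm{H}^\ast I)$ for such $I$. From this I expect $\mathbb{E}_\xi^n(X,Y)\cong \overline{\mathrm{Ext}}^n(X,Y)$, which is governed by graded $\mathrm{Ext}$ of cohomology, and then the completeness statements can be transported from graded module categories.

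\textbf{Step 2: (Ax1) and (Ax2).} Transfinite compositions of $\xi$-inflations can be realized by homotopy colimits in $\mathrm{D}(R)$. Since $\mathrm{H}^\ast$ commutes with directed colimits of such sequences, and the inflations are monic on cohomology, the composite is again $\xi$-inflation. For (Ax2), I would use compactness-type arguments. Each $X$ is small relative to $\xi$-inflations for a regular cardinal $\kappa$ exceeding the size of $\mathrm{H}^\ast X$ and of a resolution by $\xi$-projectives. Hom out of $\xi$-projectives, which are sums of shifts of $R$, commutes with these colimits.

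\textbf{Step 3: cogeneration and completeness.} The pair $({}^\perp\mathcal{I}(\xi),\mathcal{I}(\xi))$ should be cogenerated by the set $\mathcal{S}$ of shifts $\Sigma^s(R/\mathfrak{a})$, viewed as stalk complexes, with $\mathfrak{a}$ running over the left ideals. This is a graded Baer criterion: $\mathbb{E}_\xi(\Sigma^s R/\mathfrak{a},Y)=0$ for all $s,\mathfrak{a}$ forces every $\mathrm{H}^s(Y)$ to be injective, and hence $Y\in\mathcal{I}(\xi)$.

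The remaining hypothesis is completeness of $({}^\perp\mathcal{GI}(\xi),\mathcal{GI}(\xi))$, and this is where I expect the main obstacle. I would first identify $\mathcal{GI}(\xi)$ with complexes whose cohomology modules are all Gorenstein injective, via the correspondence with CE Gorenstein injective complexes \cite{YL}. I would then transport the result of \cite{SS} degreewise. Concretely, for $M\in\mathrm{D}(R)$ I would take special $\mathcal{GI}(R)$-preenvelopes of each $\mathrm{H}^s(M)$ and realize them as a $\xi$-triangle. Alternatively, I would produce a cogenerating set $\mathcal{S}'$ from the one in \cite{SS} and invoke Proposition \ref{lem4.7}(2). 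Once this is in place, Theorem \ref{thm4} yields (1) and (2) directly. In (1), the left class ${}^\perp\mathcal{I}(\xi)\cap{}^\perp\mathcal{GI}(\xi)$ is read as ${}^\perp\mathcal{I}(\xi)_n\cap{}^\perp\mathcal{GI}(\xi)$, matching Theorem \ref{thm4}(1).
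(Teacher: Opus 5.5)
Your skeleton matches the paper's. You check the hypotheses of Theorem~\ref{thm4} for $\mathrm{D}(R)$ and get completeness of $({}^\perp\mathcal{GI}(\xi),\mathcal{GI}(\xi))$ from a cogenerating set via Proposition~\ref{lem4.7}(2). You are also right to read the left class in (1) as ${}^\perp\mathcal{I}(\xi)_n\cap{}^\perp\mathcal{GI}(\xi)$. However, two of your verifications fail.

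\textbf{Step 2.} In this paper a transfinite composition is an honest colimit. Lemma~\ref{lem4.6} uses its universal property, both to build $\Phi=\mathrm{colim}\,\Phi_\alpha$ and to identify $\mathrm{colim}\,\mathcal{B}(K,M_\alpha)$ with $\mathcal{B}(K,C)$. Homotopy colimits in $\mathrm{D}(R)$ are only weak colimits, and beyond length $\omega$ they are not intrinsic to $\mathrm{D}(R)$.

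In fact (Ax1) fails. Take $R=\mathbb{Z}$: the maps in $\mathbb{Z}\xrightarrow{p}\mathbb{Z}\xrightarrow{p}\cdots$ are $\xi$-inflations. A colimit would be a retract of the homotopy colimit $\mathbb{Z}[1/p]$, hence isomorphic to it. Yet $\mathrm{Hom}_{\mathrm{D}(\mathbb{Z})}(\mathbb{Z}[1/p],\mathbb{Z}[1])=\mathrm{Ext}^1_{\mathbb{Z}}(\mathbb{Z}[1/p],\mathbb{Z})\neq0$, whereas $\lim_n\mathrm{Hom}_{\mathrm{D}(\mathbb{Z})}(\mathbb{Z},\mathbb{Z}[1])=0$.

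Your (Ax2) argument also addresses the wrong objects. Lemma~\ref{lem4.6} needs smallness of the cocone $K$ of $P\to A$, with $A$ an arbitrary coproduct, not smallness of $\xi$-projectives.

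Note that the paper's own proof does not check (Ax1)/(Ax2) at all: it proves two cogeneration statements and then cites Theorem~\ref{thm4}. So this point is open on both sides. Since (Ax1) is false in general, what is needed is a substitute for the small object argument of Lemma~\ref{lem4.6} (for instance one run in $\mathrm{C}(R)$, where colimits exist), not a verification of (Ax1).

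\textbf{Steps 1 and 3.} You rely on identifying $\mathbb{E}_\xi$ with graded $\mathrm{Ext}$ of cohomology, and this is false. $\mathbb{E}_\xi(X,Y)$ is the group of ghost maps $X\to Y[1]$. For $R=k[x,y]$ one gets $\mathbb{E}_\xi(k,k[1])=\mathrm{Hom}_{\mathrm{D}(R)}(k,k[2])=\mathrm{Ext}^2_R(k,k)\neq0$, even though $\mathrm{H}^\ast(k)$ and $\mathrm{H}^\ast(k[1])$ sit in different degrees. Also $\overline{\mathrm{Ext}}^1(k,k[1])=0$ for the stalk representatives, so $\overline{\mathrm{Ext}}^1$ computes $\mathbb{E}_\xi$ only after replacing $Y$ by a semi-injective resolution.

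Hence the graded Baer criterion is not established. The condition $\mathbb{E}_\xi((R/\mathfrak{a})[-s],Y)=0$ says exactly that every morphism $\mathfrak{a}[-s]\to Y$ in $\mathrm{D}(R)$ extends over $R[-s]$. But a module map $\mathfrak{a}\to\mathrm{H}^s(Y)$ need not lift to $\mathrm{D}(R)$; the first obstruction lies in $\mathrm{Ext}^2_R(\mathfrak{a},\mathrm{H}^{s-1}(Y))$. So injectivity of $\mathrm{H}^s(Y)$ does not follow.

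Your degreewise $\mathcal{GI}(R)$-preenvelope plan has the same defect, because $\mathrm{Hom}_{\mathrm{D}(R)}(M,G[-s])\to\mathrm{Hom}_R(\mathrm{H}^s(M),G)$ is not an isomorphism for non-injective $G$.

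The paper avoids these lifting problems by working in $\mathrm{C}(R)$:
\begin{itemize}
\item Enochs' CE cotorsion pairs $(\mathrm{CE}({}^\perp\mathcal{I}(R)_n),\mathrm{CE}(\mathcal{I}(R)_n))$ and $(\mathrm{CE}({}^\perp\mathcal{GI}(R)),\mathrm{CE}(\mathcal{GI}(R)))$ are cogenerated by a set, the latter using \cite[Theorem 5.6]{SS}.
\item $\mathcal{I}(\xi)_n$ and $\mathcal{GI}(\xi)$ are described through the cycles $\mathrm{Z}^s(I)$ of a semi-injective resolution $Y\to I$.
\item \cite[Lemmas 2.1 and 2.3]{Y} turn $\mathbb{E}_\xi(S,Y)=0$ into $\overline{\mathrm{Ext}}^1(S,I)=0$, which produces the cogenerating sets in $\mathrm{D}(R)$.
\end{itemize}
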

\begin{proof} By \cite[Proposition 3.10]{Y}, one has $$\mathcal{I}(\xi)_n=
\{X\in\mathrm{D}(R)\hspace{0.03cm}|\hspace{0.03cm}\mathrm{Z}^{s}(I)\in\mathcal{I}(R)_n\ \textrm{for\ some\ semi-injective resolution}\ X\stackrel{\simeq}\rightarrow I\ \textrm{and\ all}\ s\in\mathbb{Z}\}.$$
By \cite[Theorem 9.4]{E}, $(\mathrm{CE}({^\perp}\mathcal{I}(R)_n),\mathrm{CE}(\mathcal{I}(R)_n))$ is a cotorsion pair in $\mathrm{C}(R)$ relative to $\overline{\mathrm{Ext}}^1(-,-)$, and there exists  $S\in\mathrm{CE}({^\perp}\mathcal{I}(R)_n)$ such that $\mathrm{CE}(\mathcal{I}(R)_n)=\{X\in\mathrm{C}(R)\hspace{0.03cm}|\hspace{0.03cm}
\overline{\mathrm{Ext}}^1(S,X)=0\}$. Assume that $\mathbb{E}_\xi(S,Y)=0$ for $Y\in\mathrm{D}(R)$. Then  $\overline{\mathrm{Ext}}^1(S,I)=0$ by \cite[Lemma 2.1]{Y}, where $Y\stackrel{\simeq}\rightarrow I$ is a semi-injective resolution, so $I\in\mathrm{CE}(\mathcal{I}(R)_n)$, and therefore $Y\in\mathcal{I}(\xi)_n$ by \cite[Lemma 2.3]{Y}.
 Thus  $({^\bot}\mathcal{I}(\xi)_n,\mathcal{I}(\xi)_n)$ is a cotorsion pair in $\mathcal{B}$ by \cite[Theorem 2.5]{Y}, that is cogenerated by a set.
Also  by \cite[Proposition 3.3]{Y}, one has
$$\mathcal{GI}(\xi)=
\{X\in\mathrm{D}(R)\hspace{0.03cm}|\hspace{0.03cm}\mathrm{Z}^{s}(I)\in\mathcal{GI}(R)\ \textrm{for\ some\ semi-injective resolution}\ X\stackrel{\simeq}\rightarrow I\ \textrm{and\ all}\ s\in\mathbb{Z}\}.$$
By \cite[Theorem 9.4]{E} and \cite[Theorem 5.6]{SS}, $(\mathrm{CE}({^\perp}\mathcal{GI}(R)),\mathrm{CE}(\mathcal{GI}(R)))$ is a cotorsion pair in $\mathrm{C}(R)$ relative to $\overline{\mathrm{Ext}}^1(-,-)$, which is cogenerated by a set. By analogy with the preceding proof,
 the cotorsion pair $({^\perp}\mathcal{GI}(\xi), \mathcal{GI}(\xi))$ is cogenerated by a set. The statements are a direct consequence of Theorem \ref{thm4}.
\end{proof}


\section{\bf Model structures on $\mathcal{GI}(\xi)_n$}\label{section5}
The main aim of this section is to construct model structures on the extriangulated category induced by objects whose $\xi$-$\mathcal{G}$injective dimension is bounded by a non-negative integer, and to prove Theorem \ref{thm4.15'}.
Let $(\mathcal{B}, \mathbb{E}_\xi, \mathfrak{s}_\xi)$ be an extriangulated category, and $\mathcal{D}$ a full subcategory of $\mathcal{B}$ which is closed under extensions. It follows from \cite[Remark 2.18]{NP} that there is an extriangulated category $(\mathcal{D}, \mathbb{E}_{\mathcal{D}}, \mathfrak{s}_{\mathcal{D}})$, in which $\mathbb{E}_{\mathcal{D}}$ is the restriction of $\mathbb{E}_\xi$ onto $\mathcal{D}^{op}\times \mathcal{D}$ and  $\mathfrak{s}_{\mathcal{D}}$ is the restriction of $\mathfrak{s}_\xi$ onto $\mathcal{D}$. In particular, $(\mathcal{GI}(\xi)_n, \mathbb{E}_{\mathcal{GI}(\xi)_n}, \mathfrak{s}_{\mathcal{GI}(\xi)_n})$ is an extriangulated category with enough projectives and enough injectives by Lemma \ref{lem4.4}.

Let $(\mathcal{D}, \mathbb{E}_{\mathcal{D}}, \mathfrak{s}_{\mathcal{D}})$ be an extriangulated category as above, and $\mathcal{Z}$ a full subcategory of $\mathcal{D}$. Set

\medskip\noindent~~~~~$^{\perp_\mathcal{D}}\mathcal{Z}=\{X\in\mathcal{D}~|~\mathbb{E}_\mathcal{D}(X, Z)=0, \forall~Z\in\mathcal{Z}\}$ and $\mathcal{Z}^{\perp_\mathcal{D}}=\{X\in\mathcal{D}~|~\mathbb{E}_\mathcal{D}(Z, X)=0, \forall~Z\in\mathcal{Z}\}$.

\medskip

\begin{lem}\label{lemma4.8} Let $(\mathcal{D}, \mathbb{E}_{\mathcal{D}}, \mathfrak{s}_{\mathcal{D}})$ be an extriangulated category as above, and $\mathcal{Z}$ a full subcategory of $\mathcal{D}$. Then $^{\perp_\mathcal{D}}\mathcal{Z}={^\perp\mathcal{Z}}\cap\mathcal{D}$ and $\mathcal{Z}^{\perp_\mathcal{D}}=\mathcal{Z}^\perp\cap\mathcal{D}$.
\end{lem}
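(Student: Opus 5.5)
The statement follows directly from the definitions once we know that $\mathbb{E}_\mathcal{D}$ is literally the restriction of $\mathbb{E}_\xi$. I will prove only the first equality; the second is its exact dual, with the roles of the two arguments of the bifunctor swapped.

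By \cite[Remark 2.18]{NP}, recalled above, the extriangulated structure on $\mathcal{D}$ has $\mathbb{E}_\mathcal{D}(X,Z)=\mathbb{E}_\xi(X,Z)$ for all $X,Z\in\mathcal{D}$. No modification of the extension groups is made, since $\mathcal{D}$ is closed under extensions and every $\mathbb{E}_\xi$-extension between objects of $\mathcal{D}$ is realized by a conflation with middle term in $\mathcal{D}$. I will state this identification first, since both inclusions rest on it. Recall also that in this section ${^\perp\mathcal{Z}}$ and $\mathcal{Z}^\perp$ are computed in $(\mathcal{B},\mathbb{E}_\xi,\mathfrak{s}_\xi)$ with respect to $\mathbb{E}_\xi$, and that $\mathcal{Z}\subseteq\mathcal{D}$.

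For the inclusion ${^\perp\mathcal{Z}}\cap\mathcal{D}\subseteq{^{\perp_\mathcal{D}}\mathcal{Z}}$, let $X\in{^\perp\mathcal{Z}}\cap\mathcal{D}$. Then $X\in\mathcal{D}$, and for every $Z\in\mathcal{Z}$ we have $\mathbb{E}_\mathcal{D}(X,Z)=\mathbb{E}_\xi(X,Z)=0$. Hence $X\in{^{\perp_\mathcal{D}}\mathcal{Z}}$.

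For the reverse inclusion, let $X\in{^{\perp_\mathcal{D}}\mathcal{Z}}$. By definition $X\in\mathcal{D}$. Since $Z\in\mathcal{Z}\subseteq\mathcal{D}$, we get $\mathbb{E}_\xi(X,Z)=\mathbb{E}_\mathcal{D}(X,Z)=0$ for every $Z\in\mathcal{Z}$, so $X\in{^\perp\mathcal{Z}}\cap\mathcal{D}$.

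There is no real obstacle here. The only point that needs care is the identification $\mathbb{E}_\mathcal{D}=\mathbb{E}_\xi|_{\mathcal{D}^{\rm op}\times\mathcal{D}}$, and it holds by construction of the induced structure.
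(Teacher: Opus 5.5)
Your proposal is correct and matches the paper's proof. Both inclusions reduce to the identification $\mathbb{E}_\mathcal{D}(X,Z)=\mathbb{E}_\xi(X,Z)$ for $X,Z\in\mathcal{D}$, using $\mathcal{Z}\subseteq\mathcal{D}$, and the second equality follows dually.
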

\begin{proof} First we prove that $^{\perp_\mathcal{D}}\mathcal{Z}={^\perp\mathcal{Z}}\cap\mathcal{D}$. It is easy to prove that $^{\perp_\mathcal{D}}\mathcal{Z}\subseteq {^\perp\mathcal{Z}}\cap\mathcal{D}$. On the other hand, if $X\in{^\perp\mathcal{Z}}\cap\mathcal{D}$, then $X\in{^\perp\mathcal{Z}}$ and $X\in\mathcal{D}$. For any $Z\in\mathcal{Z}$, one has $\mathbb{E}_\mathcal{D}(X, Z)=\mathbb{E}_\xi(X, Z)=0$ since $\mathcal{Z}\subseteq \mathcal{D}$. Hence $X\in{^{\perp_\mathcal{D}}}\mathcal{Z}$. Then $^{\perp_\mathcal{D}}\mathcal{Z}={^\perp\mathcal{Z}}\cap\mathcal{D}$.

Dually, we can prove that $\mathcal{Z}^{\perp_\mathcal{D}}=\mathcal{Z}^\perp\cap\mathcal{D}$.
\end{proof}

\begin{thm}\label{thm4.9}Let $\mathcal{D}$ be a full subcategory of $(\mathcal{B}, \mathbb{E}_\xi, \mathfrak{s}_\xi)$ which is closed under extensions and closed under cones of inflations. Suppose that $(\mathcal{X}, \mathcal{Y})$ is a complete cotorsion pair in $(\mathcal{B}, \mathbb{E}_\xi, \mathfrak{s}_\xi)$ and $\mathcal{Y}\subseteq\mathcal{D}$. Then

$(1)$  $(\mathcal{X}\cap\mathcal{D}, \mathcal{Y})$ is a complete cotorsion pair in the extriangulated category $(\mathcal{D}, \mathbb{E}_{\mathcal{D}}, \mathfrak{s}_{\mathcal{D}})$.

$(2)$ If  $(\mathcal{X}, \mathcal{Y})$ is a hereditary cotorsion pair in $(\mathcal{B}, \mathbb{E}_\xi, \mathfrak{s}_\xi)$, then $(\mathcal{X}\cap\mathcal{D}, \mathcal{Y})$ is a hereditary cotorsion pair in the extriangulated category $(\mathcal{D}, \mathbb{E}_{\mathcal{D}}, \mathfrak{s}_{\mathcal{D}})$.
\end{thm}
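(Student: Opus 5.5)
We verify the conditions of Fact \ref{fact2.18} in $(\mathcal{D}, \mathbb{E}_{\mathcal{D}}, \mathfrak{s}_{\mathcal{D}})$, with Lemma \ref{lemma4.8} converting orthogonality in $\mathcal{D}$ into orthogonality in $\mathcal{B}$. Since $\mathcal{Y}\subseteq\mathcal{D}$, both $\mathcal{X}\cap\mathcal{D}$ and $\mathcal{Y}$ are subcategories of $\mathcal{D}$. They are closed under isomorphisms and direct summands because $\mathcal{X},\mathcal{Y}$ are, and because $\mathcal{D}$ is (all subcategories are assumed so). Condition (a) holds since $\mathbb{E}_{\mathcal{D}}(X,Y)=\mathbb{E}_\xi(X,Y)=0$ for $X\in\mathcal{X}\cap\mathcal{D}$ and $Y\in\mathcal{Y}$. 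If one wants the perps to match exactly, Lemma \ref{lemma4.8} gives ${}^{\perp_\mathcal{D}}\mathcal{Y}={}^\perp\mathcal{Y}\cap\mathcal{D}=\mathcal{X}\cap\mathcal{D}$ and $(\mathcal{X}\cap\mathcal{D})^{\perp_\mathcal{D}}\supseteq\mathcal{X}^\perp\cap\mathcal{D}=\mathcal{Y}$. The reverse inclusion will follow from completeness by the usual splitting argument.

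For completeness, let $D\in\mathcal{D}$. Completeness of $(\mathcal{X},\mathcal{Y})$ in $\mathcal{B}$ gives an $\mathbb{E}_\xi$-triangle $D\to Y^D\to X^D\dashrightarrow$ with $Y^D\in\mathcal{Y}\subseteq\mathcal{D}$ and $X^D\in\mathcal{X}$. Since $D,Y^D\in\mathcal{D}$ and $\mathcal{D}$ is closed under cones of inflations, we get $X^D\in\mathcal{X}\cap\mathcal{D}$. All three terms then lie in $\mathcal{D}$, so this is an $\mathbb{E}_{\mathcal{D}}$-triangle. For the other approximation, take $Y_D\to X_D\to D\dashrightarrow$ with $Y_D\in\mathcal{Y}\subseteq\mathcal{D}$ and $X_D\in\mathcal{X}$. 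Since $Y_D,D\in\mathcal{D}$ and $\mathcal{D}$ is closed under extensions, $X_D\in\mathcal{X}\cap\mathcal{D}$. Fact \ref{fact2.18} then gives (1). This step is the crux: it is exactly where the two closure hypotheses on $\mathcal{D}$ and the inclusion $\mathcal{Y}\subseteq\mathcal{D}$ are used.

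For (2), by Lemma \ref{lem2.18} it suffices to check one of the two closure conditions in $\mathcal{D}$; we check both anyway. Let $Y_1\to Y_2\to C\dashrightarrow$ be an $\mathbb{E}_{\mathcal{D}}$-triangle with $Y_1,Y_2\in\mathcal{Y}$. It is an $\mathbb{E}_\xi$-triangle, so $C\in\mathcal{Y}$ by heredity in $\mathcal{B}$. Similarly, let $A\to X_1\to X_2\dashrightarrow$ be an $\mathbb{E}_{\mathcal{D}}$-triangle with $X_i\in\mathcal{X}\cap\mathcal{D}$. Then $A\in\mathcal{X}$ by heredity in $\mathcal{B}$, and $A\in\mathcal{D}$ because it is a term of a conflation in $\mathcal{D}$. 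Hence $A\in\mathcal{X}\cap\mathcal{D}$. The only subtlety throughout is that $\mathbb{E}_{\mathcal{D}}$-triangles are exactly the $\mathbb{E}_\xi$-triangles with all terms in $\mathcal{D}$, by the construction of the extension-closed subcategory structure in \cite[Remark 2.18]{NP}. I expect no serious obstacle.
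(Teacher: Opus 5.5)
Your proof is correct and is essentially the paper's: it uses the same two approximation triangles from completeness in $\mathcal{B}$, with closure of $\mathcal{D}$ under extensions placing $X_D$ in $\mathcal{D}$ and closure under cones of inflations placing $X^D$ in $\mathcal{D}$. The differences are only cosmetic: you let Fact \ref{fact2.18} absorb the splitting argument that the paper writes out for $(\mathcal{X}\cap\mathcal{D})^{\perp}\cap\mathcal{D}=\mathcal{Y}$, and in (2) you check both closure conditions directly inside $\mathcal{D}$ instead of going through Lemma \ref{lem2.18}.
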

\begin{proof} (1) It is clear that $(\mathcal{D}, \mathbb{E}_{\mathcal{D}}, \mathfrak{s}_{\mathcal{D}})$ is an extriangulated category  because $\mathcal{D}$ is a full subcategory of $(\mathcal{B}, \mathbb{E}_\xi, \mathfrak{s}_\xi)$ which is closed under extensions. To prove that $(\mathcal{X}\cap\mathcal{D}, \mathcal{Y})$ is a cotorsion pair in $(\mathcal{D}, \mathbb{E}_{\mathcal{D}}, \mathfrak{s}_{\mathcal{D}})$, it suffices to prove that $^\perp\mathcal{Y}\cap\mathcal{D}=\mathcal{X}\cap\mathcal{D}$ and $(\mathcal{X}\cap\mathcal{D})^\perp\cap\mathcal{D}=\mathcal{Y}$ by Lemma \ref{lemma4.8}. Since $(\mathcal{X}, \mathcal{Y})$ is a complete cotorsion pair in $(\mathcal{B}, \mathbb{E}_\xi, \mathfrak{s}_\xi)$, we have $^\perp\mathcal{Y}\cap\mathcal{D}=\mathcal{X}\cap\mathcal{D}$. It is clear that $\mathcal{Y}=\mathcal{X}^\perp\subseteq{(\mathcal{X}\cap\mathcal{D})}^\perp\cap\mathcal{D}$. If $M\in{(\mathcal{X}\cap\mathcal{D})^\perp}\cap\mathcal{D}$, then $M\in{(\mathcal{X}\cap\mathcal{D})^\perp}$ and $M\in\mathcal{D}$. Since $(\mathcal{X}, \mathcal{Y})$ is a complete cotorsion pair in $(\mathcal{B}, \mathbb{E}_\xi, \mathfrak{s}_\xi)$, there is an $\mathbb{E}_\xi$-triangle $\xymatrix{M\ar[r]&Y\ar[r]&X\ar@{-->}[r]&}$ with $X\in\mathcal{X}$ and $Y\in\mathcal{Y}$. Since $Y\in\mathcal{Y}\subseteq\mathcal{D}$ and $\mathcal{D}$ is closed under cones of inflations, we have $X\in\mathcal{X}\cap\mathcal{D}$. It is easy to check that the above $\mathbb{E}_\xi$-triangle splits, and $M\in\mathcal{Y}$. Hence $(\mathcal{X}\cap\mathcal{D})^\perp\cap\mathcal{D}=\mathcal{Y}$. Therefore, $(\mathcal{X}\cap\mathcal{D}, \mathcal{Y})$ is a cotorsion pair in $(\mathcal{D}, \mathbb{E}_{\mathcal{D}}, \mathfrak{s}_{\mathcal{D}})$.

For $M\in\mathcal{D}$, there are $\mathbb{E}_\xi$-triangles $\xymatrix@=2em{Y\ar[r]&X\ar[r]&M\ar@{-->}[r]&}$ and  $\xymatrix@=2em{M\ar[r]&Y'\ar[r]&X'\ar@{-->}[r]&}$ with $X, X'\in\mathcal{X}$ and $Y, Y'\in\mathcal{Y}$ since $(\mathcal{X}, \mathcal{Y})$ is complete in $(\mathcal{B}, \mathbb{E}_\xi, \mathfrak{s}_\xi)$. Note that $\mathcal{Y}\subseteq\mathcal{D}$. Then $X, X'\in\mathcal{D}$ because $\mathcal{D}$ is closed  under extensions and closed under cones of inflations. Therefore, there exist $\mathbb{E}_\xi$-triangles $\xymatrix@=2em{Y\ar[r]&X\ar[r]&M\ar@{-->}[r]&}$ and  $\xymatrix@=2em{M\ar[r]&Y'\ar[r]&X'\ar@{-->}[r]&}$ with $X, X'\in\mathcal{X}\cap\mathcal{D}$ and $Y, Y'\in\mathcal{Y}$. Thus $(\mathcal{X}\cap\mathcal{D}, \mathcal{Y})$ is a complete cotorsion pair in $(\mathcal{D}, \mathbb{E}_{\mathcal{D}}, \mathfrak{s}_{\mathcal{D}})$.

(2) Since $(\mathcal{X}, \mathcal{Y})$ is a hereditary cotorsion pair in $(\mathcal{B}, \mathbb{E}_\xi, \mathfrak{s}_\xi)$, $\mathcal{Y}$ is closed under cones of inflations in $(\mathcal{B}, \mathbb{E}_\xi, \mathfrak{s}_\xi)$ by Lemma \ref{lem2.18}. Hence $\mathcal{Y}$ is closed under cones of inflations in $\mathcal{D}$, and  $(\mathcal{X}\cap\mathcal{D}, \mathcal{Y})$ is a hereditary cotorsion pair in the extriangulated category $(\mathcal{D}, \mathbb{E}_{\mathcal{D}}, \mathfrak{s}_{\mathcal{D}})$ by Lemma \ref{lem2.18}.\end{proof}

\begin{cor}\label{cor4.11}Let 
$m$ and $n$ be non-negative integers with $m\leqslant n$.
If $(^\perp\mathcal{I}(\xi)_m, \mathcal{I}(\xi)_m)$ is a complete and hereditary cotorsion pair in  $(\mathcal{B}, \mathbb{E}_\xi, \mathfrak{s}_\xi)$, then $(^\perp\mathcal{I}(\xi)_m\cap\mathcal{GI}(\xi)_n, \mathcal{I}(\xi)_m)$ is a complete and hereditary cotorsion pair in the extriangulated category $(\mathcal{GI}(\xi)_n, \mathbb{E}_{\mathcal{GI}(\xi)_n}, \mathfrak{s}_{\mathcal{GI}(\xi)_n})$.
\end{cor}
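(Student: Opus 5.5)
This is a direct application of Theorem \ref{thm4.9}. We take $\mathcal{D}=\mathcal{GI}(\xi)_n$ and $(\mathcal{X},\mathcal{Y})=(^\perp\mathcal{I}(\xi)_m, \mathcal{I}(\xi)_m)$. Theorem \ref{thm4.9} has three hypotheses, and we check them in turn.

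\emph{Closure of $\mathcal{D}$.} By Lemma \ref{lem4.4}, $\mathcal{GI}(\xi)_n$ is closed under extensions and under cones of inflations in $(\mathcal{B}, \mathbb{E}_\xi, \mathfrak{s}_\xi)$. This is exactly what Theorem \ref{thm4.9} requires of $\mathcal{D}$.

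\emph{The pair in $\mathcal{B}$.} The pair $(^\perp\mathcal{I}(\xi)_m, \mathcal{I}(\xi)_m)$ is complete and hereditary in $(\mathcal{B}, \mathbb{E}_\xi, \mathfrak{s}_\xi)$ by assumption.

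\emph{The inclusion $\mathcal{Y}\subseteq\mathcal{D}$.} This is the only step needing a small argument, namely $\mathcal{I}(\xi)_m\subseteq\mathcal{GI}(\xi)_n$. Since $\mathcal{I}(\xi)\subseteq\mathcal{GI}(\xi)$, a finite $\mathcal{I}(\xi)$-coresolution of length at most $m$ is also a $\mathcal{GI}(\xi)$-coresolution. Hence $\mathcal{I}(\xi)_m\subseteq\mathcal{GI}(\xi)_m$. The dimension classes increase with the index (pad a coresolution with zeros), so $\mathcal{GI}(\xi)_m\subseteq\mathcal{GI}(\xi)_n$ because $m\leqslant n$. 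Alternatively, Corollary \ref{cor4.7} gives $\mathcal{I}(\xi)_m\subseteq\mathcal{GI}(\xi)_m$ directly.

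With the hypotheses verified, Theorem \ref{thm4.9}(1) shows that $(^\perp\mathcal{I}(\xi)_m\cap\mathcal{GI}(\xi)_n, \mathcal{I}(\xi)_m)$ is a complete cotorsion pair in $(\mathcal{GI}(\xi)_n, \mathbb{E}_{\mathcal{GI}(\xi)_n}, \mathfrak{s}_{\mathcal{GI}(\xi)_n})$. Theorem \ref{thm4.9}(2), using heredity of the original pair, shows that it is hereditary. There is no real obstacle here; the only point needing care is the inclusion $\mathcal{Y}\subseteq\mathcal{D}$ above.
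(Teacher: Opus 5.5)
Your proposal is correct and follows essentially the same route as the paper: you apply Theorem \ref{thm4.9} with $\mathcal{D}=\mathcal{GI}(\xi)_n$, using Lemma \ref{lem4.4} for the closure properties and the chain $\mathcal{I}(\xi)_m\subseteq\mathcal{GI}(\xi)_m\subseteq\mathcal{GI}(\xi)_n$ for the inclusion $\mathcal{Y}\subseteq\mathcal{D}$. The one difference is cosmetic: the monotonicity $\mathcal{GI}(\xi)_m\subseteq\mathcal{GI}(\xi)_n$ already holds by the definition of $\mathcal{Y}_n$ (dimension at most $n$), so padding with zeros is not needed.
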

\begin{proof} It follows from Lemma \ref{lem4.4} that $\mathcal{GI}(\xi)_n$ is closed under extensions and closed under cones of inflations in $(\mathcal{B}, \mathbb{E}_\xi, \mathfrak{s}_\xi)$, hence $(\mathcal{GI}(\xi)_n, \mathbb{E}_{\mathcal{GI}(\xi)_n}, \mathfrak{s}_{\mathcal{GI}(\xi)_n})$ is an extriangulated category. Since $m\leqslant n$, $\mathcal{I}(\xi)_m\subseteq\mathcal{GI}(\xi)_m\subseteq \mathcal{GI}(\xi)_n$, and $(^\perp\mathcal{I}(\xi)_m\cap\mathcal{GI}(\xi)_n, \mathcal{I}(\xi)_m)$ is a complete and hereditary cotorsion pair in the extriangulated category $(\mathcal{GI}(\xi)_n, \mathbb{E}_{\mathcal{GI}(\xi)_n}, \mathfrak{s}_{\mathcal{GI}(\xi)_n})$ by Theorem \ref{thm4.9}.
\end{proof}
\begin{lem}\label{lemma4.11}Let 
$m$ and $n$ be non-negative integers with $m\leqslant n$.
If $(^\perp\mathcal{I}(\xi)_m, \mathcal{I}(\xi)_m)$ is a complete and hereditary cotorsion pair in  $(\mathcal{B}, \mathbb{E}_\xi, \mathfrak{s}_\xi)$, then $(^\perp\mathcal{I}(\xi)_m\cap\mathcal{I}(\xi)_n)^\perp\cap\mathcal{I}(\xi)_n=\mathcal{I}(\xi)_m$.
\end{lem}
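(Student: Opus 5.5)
We prove the two inclusions separately. The inclusion $\mathcal{I}(\xi)_m\subseteq(^\perp\mathcal{I}(\xi)_m\cap\mathcal{I}(\xi)_n)^\perp\cap\mathcal{I}(\xi)_n$ is immediate. Since $m\leqslant n$, we have $\mathcal{I}(\xi)_m\subseteq\mathcal{I}(\xi)_n$. By definition, every object of $^\perp\mathcal{I}(\xi)_m$ has vanishing $\mathbb{E}_\xi$ into $\mathcal{I}(\xi)_m$, so $\mathcal{I}(\xi)_m\subseteq(^\perp\mathcal{I}(\xi)_m)^\perp\subseteq(^\perp\mathcal{I}(\xi)_m\cap\mathcal{I}(\xi)_n)^\perp$.

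For the reverse inclusion, let $M\in\mathcal{I}(\xi)_n$ with $\mathbb{E}_\xi(H,M)=0$ for all $H\in{^\perp\mathcal{I}(\xi)_m}\cap\mathcal{I}(\xi)_n$. We use completeness of $(^\perp\mathcal{I}(\xi)_m,\mathcal{I}(\xi)_m)$ to choose an $\mathbb{E}_\xi$-triangle
$$\xymatrix@C=2em{M\ar[r]&Y\ar[r]&X\ar@{-->}[r]&}$$
with $Y\in\mathcal{I}(\xi)_m$ and $X\in{^\perp\mathcal{I}(\xi)_m}$. We then show that $X\in\mathcal{I}(\xi)_n$. Note that $Y\in\mathcal{I}(\xi)_m\subseteq\mathcal{I}(\xi)_n$ and $M\in\mathcal{I}(\xi)_n$. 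The pair $(\mathcal{B},\mathcal{I}(\xi))$ is a complete hereditary cotorsion pair in $(\mathcal{B},\mathbb{E}_\xi,\mathfrak{s}_\xi)$, since there are enough injectives and $\mathcal{I}(\xi)$ is closed under cones of inflations, all cones of inflations between injectives being split. Hence Proposition \ref{cor2.20}(2), applied with $A=M\in\mathcal{I}(\xi)_n\subseteq\mathcal{I}(\xi)_{n+1}$ and $B=Y\in\mathcal{I}(\xi)_n$, gives $X\in\mathcal{I}(\xi)_n$. It follows that $X\in{^\perp\mathcal{I}(\xi)_m}\cap\mathcal{I}(\xi)_n$, so $\mathbb{E}_\xi(X,M)=0$ by the hypothesis on $M$. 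The triangle therefore splits, and $M$ is a direct summand of $Y\in\mathcal{I}(\xi)_m$.

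The remaining step is to conclude that $M\in\mathcal{I}(\xi)_m$, that is, that $\mathcal{I}(\xi)_m$ is closed under direct summands. This is where some care is needed, because closure under summands of a class defined by coresolution dimension is not automatic from the definition. The cleanest route is to use that $\mathcal{I}(\xi)_m$ is the right half of the cotorsion pair $(^\perp\mathcal{I}(\xi)_m,\mathcal{I}(\xi)_m)$ assumed in the statement. Then $\mathcal{I}(\xi)_m=(^\perp\mathcal{I}(\xi)_m)^\perp$, and $\mathbb{E}_\xi(-,-)$ is additive, so the class is closed under summands. Concretely, $\mathbb{E}_\xi(Z,M)$ is a direct summand of $\mathbb{E}_\xi(Z,Y)=0$ for every $Z\in{^\perp\mathcal{I}(\xi)_m}$, hence vanishes, and so $M\in\mathcal{I}(\xi)_m$. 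This also agrees with the paper's standing convention that all subcategories are closed under direct summands.

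The main point to double-check is therefore the application of Proposition \ref{cor2.20}(2) in the ambient category $(\mathcal{B},\mathbb{E}_\xi,\mathfrak{s}_\xi)$, where the pair used is $(\mathcal{B},\mathcal{I}(\xi))$ and not $(^\perp\mathcal{I}(\xi)_m,\mathcal{I}(\xi)_m)$. Alternatively, one can avoid that proposition by using hereditariness of $(^\perp\mathcal{I}(\xi)_m,\mathcal{I}(\xi)_m)$ together with the thickness-type closure of $\mathcal{I}(\xi)_n$ under cones of inflations.
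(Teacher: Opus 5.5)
Your proposal is correct and follows the paper's proof: take the special preenvelope $M\to Y\to X$ with $Y\in\mathcal{I}(\xi)_m$ and $X\in{}^\perp\mathcal{I}(\xi)_m$, show $X\in\mathcal{I}(\xi)_n$ so that $\mathbb{E}_\xi(X,M)=0$, and conclude that the triangle splits. The paper asserts two steps without justification: that $\mathcal{I}(\xi)_n$ is closed under cones of inflations, and that the splitting gives $M\in\mathcal{I}(\xi)_m$. You justify the first with Proposition \ref{cor2.20}(2), applied to the complete hereditary pair $(\mathcal{B},\mathcal{I}(\xi))$, and the second with $\mathcal{I}(\xi)_m=({}^\perp\mathcal{I}(\xi)_m)^\perp$.
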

\begin{proof} It is clear that $\mathcal{I}(\xi)_m\subseteq{(^\perp\mathcal{I}(\xi)_m\cap\mathcal{I}(\xi)_n)^\perp}\cap\mathcal{I}(\xi)_n$. If $M\in{(^\perp\mathcal{I}(\xi)_m\cap\mathcal{I}(\xi)_n)^\perp}\cap\mathcal{I}(\xi)_n$, then there exists an $\mathbb{E}_\xi$-triangle $\xymatrix@=2em{M\ar[r]&H\ar[r]&N\ar@{-->}[r]&}$ with $H\in\mathcal{I}(\xi)_m$ and $N\in^\perp\mathcal{I}(\xi)_m$ since $(^\perp\mathcal{I}(\xi)_m, \mathcal{I}(\xi)_m)$ is a complete cotorsion pair in $(\mathcal{B}, \mathbb{E}_\xi, \mathfrak{s}_\xi)$. Hence $N\in\mathcal{I}(\xi)_n$ because $\mathcal{I}(\xi)_n$ is closed under cones of inflations. Then $N\in{^\perp\mathcal{I}(\xi)_m\cap\mathcal{I}(\xi)_n}$, and $\mathbb{E}_\xi(N, M)=0$. So the above $\mathbb{E}_\xi$-triangle splits, which implies $M\in\mathcal{I}(\xi)_m$.
\end{proof}

\begin{thm}\label{thm4.15} Let 
$m$ and $n$ be non-negative integers with $m\leqslant n$.
If $(^\perp\mathcal{I}(\xi)_m, \mathcal{I}(\xi)_m)$ is a complete and hereditary cotorsion pair in  $(\mathcal{B}, \mathbb{E}_\xi, \mathfrak{s}_\xi)$, then the following hold.

$(1)$ The pair $(^\perp\mathcal{I}(\xi)_m\cap\mathcal{I}(\xi)_n, \mathcal{GI}(\xi)_m)$ is a complete and hereditary cotorsion pair in the extriangulated category $(\mathcal{GI}(\xi)_n, \mathbb{E}_{\mathcal{GI}(\xi)_n}, \mathfrak{s}_{\mathcal{GI}(\xi)_n})$ with core $^\perp\mathcal{I}(\xi)_m\cap\mathcal{I}(\xi)_m$.

$(2)$ The triple $(^\perp\mathcal{I}(\xi)_m\cap\mathcal{GI}(\xi)_n, \mathcal{I}(\xi)_n, \mathcal{GI}(\xi)_m)$ is a hereditary Hovey triple in the extriangulated category $(\mathcal{GI}(\xi)_n, \mathbb{E}_{\mathcal{GI}(\xi)_n}, \mathfrak{s}_{\mathcal{GI}(\xi)_n})$. The corresponding homotopy category is
$$\underline{^\perp\mathcal{I}(\xi)_m\cap\mathcal{GI}(\xi)_m}\cong \underline{\mathcal{GI}(\xi)}.$$
\end{thm}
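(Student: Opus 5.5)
The plan is to verify directly the axioms of a hereditary Hovey triple inside the extriangulated category $\mathcal{D}:=\mathcal{GI}(\xi)_n$. Write $\mathcal{C}=\,^\perp\mathcal{I}(\xi)_m\cap\mathcal{D}$, $\mathcal{W}=\mathcal{I}(\xi)_n$ and $\mathcal{F}=\mathcal{GI}(\xi)_m$. Then compare the resulting homotopy category with that of an auxiliary triple having the same class $\mathcal{W}$.

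\emph{Step 1: thickness of $\mathcal{W}$ and identification of intersections.}

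By Corollary \ref{cor4.7}, $\mathcal{I}(\xi)_n={^\perp\mathcal{GI}}(\xi)\cap\mathcal{D}$. Since ${^\perp\mathcal{GI}}(\xi)$ is thick in $\mathcal{B}$ by Lemma \ref{lem4.2}, the class $\mathcal{W}$ is thick in $\mathcal{D}$, because conflations of $\mathcal{D}$ are conflations of $\mathcal{B}$.

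Next, Corollary \ref{cor4.7} applied with $m$ gives
$$\mathcal{W}\cap\mathcal{F}={^\perp\mathcal{GI}}(\xi)\cap\mathcal{GI}(\xi)_m=\mathcal{I}(\xi)_m,$$
using $m\leqslant n$. Since $\mathcal{I}(\xi)_n\subseteq\mathcal{D}$, we also get $\mathcal{C}\cap\mathcal{W}={^\perp\mathcal{I}(\xi)_m}\cap\mathcal{I}(\xi)_n$.

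\emph{Step 2: the cotorsion pair $(\mathcal{C},\mathcal{W}\cap\mathcal{F})$.}

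By Step 1 this pair is $(^\perp\mathcal{I}(\xi)_m\cap\mathcal{GI}(\xi)_n,\mathcal{I}(\xi)_m)$. Corollary \ref{cor4.11} says exactly that it is complete and hereditary in $\mathcal{D}$.

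\emph{Step 3: the pair $(^\perp\mathcal{I}(\xi)_m\cap\mathcal{I}(\xi)_n,\mathcal{GI}(\xi)_m)$ is complete in $\mathcal{D}$.}

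I use Fact \ref{fact2.18}.

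Orthogonality: if $H\in{^\perp\mathcal{I}(\xi)_m}\cap\mathcal{I}(\xi)_n$, then $H\in{^\perp\mathcal{I}(\xi)_m}\cap{^\perp\mathcal{GI}}(\xi)$ by Lemma \ref{lem4.3}. Hence $\mathbb{E}_\xi(H,\mathcal{GI}(\xi)_m)=0$ by Theorem \ref{thm3}, implication $(1)\Rightarrow(4)$ applied with $m$.

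Right approximation of $M\in\mathcal{D}$:
\begin{itemize}
\item Theorem \ref{thm3}(2) gives an $\mathbb{E}_\xi$-triangle $N\to L\to M$ with $N\in\mathcal{GI}(\xi)$ and $L\in\mathcal{I}(\xi)_n$.
\item Completeness of $(^\perp\mathcal{I}(\xi)_m,\mathcal{I}(\xi)_m)$ gives $I'\to X\to L$ with $X\in{^\perp\mathcal{I}(\xi)_m}$ and $I'\in\mathcal{I}(\xi)_m$. Then $X\in\mathcal{I}(\xi)_n$ by Proposition \ref{cor2.20}(1), since $I'$ and $L$ lie in $\mathcal{I}(\xi)_n$ and $(\mathcal{B},\mathcal{I}(\xi))$ is complete and hereditary.
\item $({\rm ET4})^{\rm op}$ applied to the deflations $X\to L\to M$ gives $K\to X\to M$ together with $I'\to K\to N$. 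So $K\in\mathcal{GI}(\xi)_m$ by Lemma \ref{lem4.4}.
\end{itemize}

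Left approximation of $M\in\mathcal{D}$:
\begin{itemize}
\item Theorem \ref{thm3}(3) gives $M\to G\to K'$ with $G\in\mathcal{GI}(\xi)$ and $K'\in\mathcal{I}(\xi)_{n-1}\subseteq\mathcal{I}(\xi)_n$.
\item Take $I''\to X'\to K'$ with $X'\in{^\perp\mathcal{I}(\xi)_m}\cap\mathcal{I}(\xi)_n$ and $I''\in\mathcal{I}(\xi)_m$, exactly as above.
\item Pull back along $X'\to K'$ using the dual of Lemma \ref{lem1}. This yields $M\to E\to X'$ and $I''\to E\to G$, so $E\in\mathcal{GI}(\xi)_m$.
\end{itemize}

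Hereditary: $\mathcal{GI}(\xi)_m$ is closed under cones of inflations by Lemma \ref{lem4.4}, so Lemma \ref{lem2.18} applies.

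Core:
$$^\perp\mathcal{I}(\xi)_m\cap\mathcal{I}(\xi)_n\cap\mathcal{GI}(\xi)_m={^\perp\mathcal{I}(\xi)_m}\cap\mathcal{I}(\xi)_m$$
by Corollary \ref{cor4.7}. This proves (1) and completes the Hovey triple in (2). Theorem \ref{thm0} then gives the Frobenius category $\mathcal{C}\cap\mathcal{F}={^\perp\mathcal{I}(\xi)_m}\cap\mathcal{GI}(\xi)_m$ and its stable category as the homotopy category.

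\emph{Step 4 (main obstacle): the equivalence with $\underline{\mathcal{GI}(\xi)}$.}

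Theorem \ref{thm2} is not directly available, since completeness of $(^\perp\mathcal{GI}(\xi),\mathcal{GI}(\xi))$ in $\mathcal{B}$ is not assumed. My plan is to work inside $\mathcal{D}$ instead and compare with the auxiliary triple $\mathcal{M}_0=(\mathcal{D},\mathcal{I}(\xi)_n,\mathcal{GI}(\xi))$.

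\begin{itemize}
\item The pair $(\mathcal{I}(\xi)_n,\mathcal{GI}(\xi))$ is orthogonal by Lemma \ref{lem4.3}.
\item It is complete in $\mathcal{D}$ by the triangles of Theorem \ref{thm3}(2),(3) with $K'\in\mathcal{I}(\xi)_{n-1}\subseteq\mathcal{I}(\xi)_n$. I would close the cotorsion-pair equalities by the usual splitting argument.
\item The pair $(\mathcal{D},\mathcal{I}(\xi))$ is complete since $\mathcal{I}(\xi)\subseteq\mathcal{D}$ and $\mathcal{D}$ is closed under cones of inflations.
\end{itemize}

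So $\mathcal{M}_0$ is a hereditary Hovey triple, and by Theorem \ref{thm0} we have ${\rm Ho}(\mathcal{M}_0)\simeq\underline{\mathcal{GI}(\xi)}$.

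Both triples have the same thick class $\mathcal{W}=\mathcal{I}(\xi)_n$, hence the same weak equivalences. Their homotopy categories are therefore the same localization, and the identity functor is a triangle equivalence. I expect the subtle point to be this last identification, namely that the triangulated structures agree. For that I would invoke the description of weak equivalences via $\mathcal{W}$ in \cite[Section 5]{NP}, as was used implicitly for ${\rm Ho}(\mathcal{M}_n)={\rm Ho}(\mathcal{M})$ in Theorem \ref{thm2}.
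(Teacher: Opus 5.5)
Your proposal is correct, and its skeleton matches the paper's proof. You use Corollary \ref{cor4.11} for $(\mathcal{C},\mathcal{W}\cap\mathcal{F})$, a direct check for $({}^\perp\mathcal{I}(\xi)_m\cap\mathcal{I}(\xi)_n,\mathcal{GI}(\xi)_m)$, and, for the last claim, a comparison with $(\mathcal{GI}(\xi)_n,\mathcal{I}(\xi)_n,\mathcal{GI}(\xi))$. The paper obtains that auxiliary triple as the case $m=0$ of part (1). It then identifies the two homotopy categories on the same implicit grounds you give: both triples have the same $\mathcal{W}$.

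The real differences are in part (1) and in thickness.

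\emph{Part (1).} The paper proves the two perpendicular equalities directly.
\begin{itemize}
\item The inclusion $({}^\perp\mathcal{I}(\xi)_m\cap\mathcal{I}(\xi)_n)^\perp\cap\mathcal{GI}(\xi)_n\subseteq\mathcal{GI}(\xi)_m$ costs it an extra $({\rm ET4})^{\rm op}$ diagram and Lemma \ref{lemma4.11}.
\item It then builds only the right approximation and gets the left one from Lemma \ref{lemma2.18}. For this it asserts that $\mathcal{GI}(\xi)_n$ has enough projectives and enough injectives.
\item Only the injective half is actually used there. Enough projectives is not evident when only the $m$-th pair is assumed complete.
\end{itemize}
You construct both approximations and let the splitting argument behind Fact \ref{fact2.18} give the perpendicular equalities. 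This removes the need for Lemma \ref{lemma4.11} and for any claim about projectives in $\mathcal{GI}(\xi)_n$. The cost is one additional Lemma \ref{lem1} diagram.

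\emph{Thickness.} You observe that $\mathcal{I}(\xi)_n={}^\perp\mathcal{GI}(\xi)\cap\mathcal{GI}(\xi)_n$ and that ${}^\perp\mathcal{GI}(\xi)$ is thick by Lemma \ref{lem4.2}. This settles all two-out-of-three cases at once. The paper argues only one case explicitly, using Corollary \ref{cor4.7} and dimension bounds.

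One small slip: in your left approximation, the triangles $M\to G\to K'$ and $I''\to X'\to K'$ share their third term. So the tool you need is Lemma \ref{lem1} itself, not its dual.
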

\begin{proof} (1) It is easy to see that $(\mathcal{GI}(\xi)_n, \mathbb{E}_{\mathcal{GI}(\xi)_n}, \mathfrak{s}_{\mathcal{GI}(\xi)_n})$ is an extriangulated category with enough projectives and enough injectives. To prove that $(^\perp\mathcal{I}(\xi)_m\cap\mathcal{I}(\xi)_n, \mathcal{GI}(\xi)_m)$ is a cotorsion pair in $(\mathcal{GI}(\xi)_n, \mathbb{E}_{\mathcal{GI}(\xi)_n}, \mathfrak{s}_{\mathcal{GI}(\xi)_n})$, it suffices to show that $^\perp\mathcal{GI}(\xi)_m\cap\mathcal{GI}(\xi)_n={^\perp\mathcal{I}}(\xi)_m\cap\mathcal{I}(\xi)_n$ and $(^\perp\mathcal{I}(\xi)_m\cap\mathcal{I}(\xi)_n)^\perp\cap\mathcal{GI}(\xi)_n=\mathcal{GI}(\xi)_m$ by Lemma \ref{lemma4.8}.

Since $\mathcal{I}(\xi)_n\subseteq {^\perp\mathcal{GI}}(\xi)$ by Lemma \ref{lem4.3}, one has $^\perp\mathcal{I}(\xi)_m\cap\mathcal{I}(\xi)_n\subseteq {^\perp\mathcal{I}}(\xi)_m\cap{^\perp\mathcal{GI}}(\xi)$. It follows from Theorem \ref{thm3}(4) that

\qquad\qquad$\mathbb{E}_\xi(^\perp\mathcal{I}(\xi)_m\cap\mathcal{I}(\xi)_n, \mathcal{GI}(\xi)_m)=0,$ \hfill $(*)$\qquad

 \noindent which implies that $^\perp\mathcal{I}(\xi)_m\cap\mathcal{I}(\xi)_n\subseteq{^\perp\mathcal{GI}}(\xi)_m\cap\mathcal{GI}(\xi)_n$ and $\mathcal{GI}(\xi)_m\subseteq (^\perp\mathcal{I}(\xi)_m\cap\mathcal{I}(\xi)_n)^\perp\cap\mathcal{GI}(\xi)_n$.

Since $^\perp\mathcal{GI}(\xi)_m\subseteq {^\perp\mathcal{I}}(\xi)_m\cap{^\perp\mathcal{GI}}(\xi)$ by Theorem \ref{thm3}, and $^\perp\mathcal{GI}(\xi)\cap\mathcal{GI}(\xi)_n=\mathcal{I}(\xi)_n$ by Corollary \ref{cor4.7}, one has $^\perp\mathcal{GI}(\xi)_m\cap\mathcal{GI}(\xi)_n\subseteq {^\perp\mathcal{I}}(\xi)_m\cap{^\perp\mathcal{GI}}(\xi)\cap\mathcal{GI}(\xi)_n
={^\perp\mathcal{I}}(\xi)_m\cap\mathcal{I}(\xi)_n$. Hence $^\perp\mathcal{GI}(\xi)_m\cap\mathcal{GI}(\xi)_n={^\perp\mathcal{I}}(\xi)_m\cap\mathcal{I}(\xi)_n$.

It remains to prove $(^\perp\mathcal{I}(\xi)_m\cap\mathcal{I}(\xi)_n)^\perp\cap\mathcal{GI}(\xi)_n\subseteq\mathcal{GI}(\xi)_m$. We may assume that $n\geqslant 1$. Let $M\in(^\perp\mathcal{I}(\xi)_m\cap\mathcal{I}(\xi)_n)^\perp\cap\mathcal{GI}(\xi)_n$. Then  there is an $\mathbb{E}_\xi$-triangle $\xymatrix@=2em{M\ar[r]&G\ar[r]&K\ar@{-->}[r]&}$ with $K\in\mathcal{I}(\xi)_{n-1}$ and $G\in\mathcal{GI}(\xi)$ by Theorem \ref{thm3} as $M\in\mathcal{GI}(\xi)_n$. Since $G\in\mathcal{GI}(\xi)$, there is an $\mathbb{E}_\xi$-triangle $\xymatrix@=2em{G'\ar[r]&I\ar[r]&G\ar@{-->}[r]&}$ with $I\in\mathcal{I}(\xi)$ and $G'\in\mathcal{GI}(\xi)$. It follows from (ET4)$^{\rm op}$ that there is a commutative diagram with $\mathbb{E}_\xi$-triangles
$$\xymatrix@R=0.5cm{G'\ar[r]\ar@{=}[d]&H\ar[r]\ar[d]&M\ar[d]\ar@{-->}[r]&\\
G'\ar[r]&I\ar[r]\ar[d]&G\ar[d]\ar@{-->}[r]&\\
&K\ar@{-->}[d]\ar@{=}[r]&K\ar@{-->}[d]&\\
&&&}$$
Then $H\in\mathcal{I}(\xi)_n$ as $I\in\mathcal{I}(\xi)$ and $K\in\mathcal{I}(\xi)_{n-1}$. Since $G'\in\mathcal{GI}(\xi)\subseteq\mathcal{GI}(\xi)_m\subseteq (^\perp\mathcal{I}(\xi)_m\cap\mathcal{I}(\xi)_n)^\perp$ by $(*)$ and $(^\perp\mathcal{I}(\xi)_m\cap\mathcal{I}(\xi)_n)^\perp$ is closed under extensions, it follows that $H\in(^\perp\mathcal{I}(\xi)_m\cap\mathcal{I}(\xi)_n)^\perp$. Thus
$H\in (^\perp\mathcal{I}(\xi)_m\cap\mathcal{I}(\xi)_n)^\perp\cap\mathcal{I}(\xi)_n=\mathcal{I}(\xi)_m$ by Lemma \ref{lemma4.11}, and $M\in\mathcal{GI}(\xi)_m$ by Theorem \ref{thm3}. Therefore, we have proved that $(^\perp\mathcal{I}(\xi)_m\cap\mathcal{I}(\xi)_n, \mathcal{GI}(\xi)_m)$ is a cotorsion pair in $(\mathcal{GI}(\xi)_n, \mathbb{E}_{\mathcal{GI}(\xi)_n}, \mathfrak{s}_{\mathcal{GI}(\xi)_n})$.

Next we claim that it is complete in $(\mathcal{GI}(\xi)_n, \mathbb{E}_{\mathcal{GI}(\xi)_n}, \mathfrak{s}_{\mathcal{GI}(\xi)_n})$. For any $M\in\mathcal{GI}(\xi)_n$, there is an
$\mathbb{E}_\xi$-triangle $$\xymatrix@=2em{G\ar[r]&L\ar[r]&M\ar@{-->}[r]&}$$ with $L\in\mathcal{I}(\xi)_{n}$ and $G\in\mathcal{GI}(\xi)$ by Theorem \ref{thm3}.
Since $(^\perp\mathcal{I}(\xi)_m\cap\mathcal{GI}(\xi)_n, \mathcal{I}(\xi)_m)$ is a complete cotorsion pair in $(\mathcal{GI}(\xi)_n, \mathbb{E}_{\mathcal{GI}(\xi)_n}, \mathfrak{s}_{\mathcal{GI}(\xi)_n})$ by Corollary \ref{cor4.11}, there is an $\mathbb{E}_\xi$-triangle
$$\xymatrix@=2em{N\ar[r]&K\ar[r]&L\ar@{-->}[r]&}$$ with $K\in{^\perp\mathcal{I}}(\xi)_m\cap\mathcal{GI}(\xi)_n$ and $N\in\mathcal{I}(\xi)_m$. It follows from (ET4)$^{\rm op}$ that there is a commutative diagram with $\mathbb{E}_\xi$-triangles
$$\xymatrix@R0.5cm{N\ar[r]\ar@{=}[d]&H\ar[r]\ar[d]&G\ar[d]\ar@{-->}[r]&\\
N\ar[r]&K\ar[r]\ar[d]&L\ar[d]\ar@{-->}[r]&\\
&M\ar@{-->}[d]\ar@{=}[r]&M\ar@{-->}[d]&\\
&&&}$$
Since $N\in\mathcal{I}(\xi)_n$ and $L\in\mathcal{I}(\xi)_n$, it follows that $K\in\mathcal{I}(\xi)_n$, and hence $K\in{^\perp\mathcal{I}}(\xi)_m\cap\mathcal{I}(\xi)_n$.  Since $G\in\mathcal{GI}(\xi)\subseteq\mathcal{GI}(\xi)_m$ and $N\in\mathcal{I}(\xi)_m\subseteq\mathcal{GI}(\xi)_m$, it follows that $H\in\mathcal{GI}(\xi)_m$ because $\mathcal{GI}(\xi)_m$ is closed under extensions by Lemma \ref{lem4.4}. Thus we get an $\mathbb{E}_\xi$-triangle
$$\xymatrix@=2em{H\ar[r]&K\ar[r]&M\ar@{-->}[r]&}$$ with $H\in\mathcal{GI}(\xi)_{m}$ and $K\in{^\perp\mathcal{I}}(\xi)_m\cap\mathcal{I}(\xi)_n$. Therefore, $(^\perp\mathcal{I}(\xi)_m\cap\mathcal{I}(\xi)_n, \mathcal{GI}(\xi)_m)$ is a complete cotorsion pair in the extriangulated category $(\mathcal{GI}(\xi)_n, \mathbb{E}_{\mathcal{GI}(\xi)_n}, \mathfrak{s}_{\mathcal{GI}(\xi)_n})$ by Lemma \ref{lemma2.18}.

Note that $\mathcal{GI}(\xi)_m$ is closed under cones of inflations in  $(\mathcal{B}, \mathbb{E}_{\xi}, \mathfrak{s}_{\xi})$ by Lemma \ref{lem4.4}. Then $\mathcal{GI}(\xi)_m$ is closed under cones of inflations in $(\mathcal{GI}(\xi)_n, \mathbb{E}_{\mathcal{GI}(\xi)_n}, \mathfrak{s}_{\mathcal{GI}(\xi)_n})$. It follows from Lemma \ref{lem2.18} that the complete cotorsion pair $(^\perp\mathcal{I}(\xi)_m\cap\mathcal{I}(\xi)_n, \mathcal{GI}(\xi)_m)$ is hereditary in  $(\mathcal{GI}(\xi)_n, \mathbb{E}_{\mathcal{GI}(\xi)_n}, \mathfrak{s}_{\mathcal{GI}(\xi)_n})$. It is easy to check that $\mathcal{GI}(\xi)_m\cap\mathcal{I}(\xi)_n=\mathcal{I}(\xi)_m$ by the dual version of \cite[Proposition 5.4]{HZZ}, hence the core of this cotorsion pair is $^\perp\mathcal{I}(\xi)_m\cap\mathcal{I}(\xi)_m$.

(2) First we claim that $\mathcal{I}(\xi)_n$ is thick in $(\mathcal{GI}(\xi)_n, \mathbb{E}_{\mathcal{GI}(\xi)_n}, \mathfrak{s}_{\mathcal{GI}(\xi)_n})$. Let $\xymatrix@=2em{X\ar[r]&Y\ar[r]&Z\ar@{-->}[r]&}$ be an $\mathbb{E}_\xi$-triangle in $\mathcal{GI}(\xi)_n$. It suffices to prove that if $Y, Z\in\mathcal{I}(\xi)_n$, then $Z\in\mathcal{I}(\xi)_n$. It follows from Corollary \ref{cor4.7} that $^\perp\mathcal{GI}(\xi)\cap \mathcal{GI}(\xi)_k=\mathcal{I}(\xi)_k$ for any non-negative integer $k$. Hence $Z\in\mathcal{I}(\xi)_{n+1}\cap\mathcal{GI}(\xi)_n={^\perp\mathcal{GI}}(\xi)\cap \mathcal{GI}(\xi)_{n+1}\cap\mathcal{GI}(\xi)_n={^\perp\mathcal{GI}}(\xi)\cap \mathcal{GI}(\xi)_{n}=\mathcal{I}(\xi)_n$. It follows from Corollary \ref{cor4.11} and (1) that $(^\perp\mathcal{I}(\xi)_m\cap\mathcal{GI}(\xi)_n, \mathcal{I}(\xi)_n, \mathcal{GI}(\xi)_m)$ is a hereditary Hovey triple in the extriangulated category $(\mathcal{GI}(\xi)_n, \mathbb{E}_{\mathcal{GI}(\xi)_n}, \mathfrak{s}_{\mathcal{GI}(\xi)_n})$.

Let $m=0$. It is obvious that $(^\perp\mathcal{I}(\xi), \mathcal{I}(\xi))$ is a complete and hereditary cotorsion pair, hence the triple  $(\mathcal{GI}(\xi)_n, \mathcal{I}(\xi)_n, \mathcal{GI}(\xi))$ is a hereditary Hovey triple in the extriangulated category $(\mathcal{GI}(\xi)_n, \mathbb{E}_{\mathcal{GI}(\xi)_n}, \mathfrak{s}_{\mathcal{GI}(\xi)_n})$ by (1). So the corresponding homotopy category is

\hspace{3.4cm}$\underline{^\perp\mathcal{I}(\xi)_m\cap\mathcal{GI}(\xi)_m}\cong \underline{\mathcal{GI}(\xi)}$.
\end{proof}

We end this paper with the proof of Theorem \ref{thm4.15'} in the introduction.
\vspace{2mm}

{\bf Proof of Theorem \ref{thm4.15'}.} The statement is an immediate consequence of Theorem \ref{thm4.15} combined with Proposition \ref{lem4.7}(1). \hfill$\Box$

\renewcommand\refname
\end{document}